\documentclass[reqno,11pt]{amsart}
\newcommand{\pa}{\partial}
\usepackage{amsfonts}
\usepackage{amsmath,latexsym,amssymb,amsfonts,amsbsy, amsthm}

\textwidth=16cm \oddsidemargin=-0.5cm \evensidemargin=-0.5cm
\textheight=23cm \topmargin=-0.2cm \setcounter{page}{1}

\newtheorem{lemma}{Lemma}
[section]
\newtheorem{proposition}{Proposition}
[section]
\newtheorem{theorem}{Theorem}
[section]

\newcommand{\eps}{\varepsilon}

\newcommand{\grad}{\nabla_{\!x}}

\def \del{{\partial}}

\def \DIV{\nabla_{\!x}\!\cdot\! }
\def \LAP{\Delta}

\def \b{\mathrm{b}}

\def \<{\langle}
\def \>{\rangle}

\def \eps{{\varepsilon}}

\begin{document}

\title[Inviscid Limit of the Linearized Compressible Navier-Stokes-Fourier]{Zero viscosity and thermal diffusivity limit of the linearized compressible Navier-Stokes-Fourier equations in the half plane}
\author[Y. Ding]{Yutao Ding}
\address{Mathematical Sciences Center, Jin Chunyuan West Building, Beijing, 100084}
\email{ytding@math.tsinghua.edu.cn}
\author[N. Jiang]{Ning Jiang}
\address{Mathematical Sciences Center, Jin Chunyuan West Building, Beijing, 100084}
\email{njiang@math.tsinghua.edu.cn}

%\date{\today}

\begin{abstract}
We study the zero viscosity and heat conductivity limit of an initial boundary problem for the linearized Navier-Stokes-Fourier equations of a compressible viscous and heat conducting fluid in the half plane. We consider the case that the viscosity and thermal diffusivity converge to zero at the same order. The approximate solution of the linearized Navier-Stokes-Fourier equations with inner and boundary expansion terms is analyzed formally first by multiscale analysis. Then the pointwise estimates of the error terms of the approximate solution are obtained by energy methods, thus establish the uniform stability for the linearized Navier-Stokes-Fourier equations in the zero viscosity and heat conductivity limit. This work is based on \cite{xy} and generalize the results from isentropic case to the general compressible fluid with thermal diffusive effect. Besides the viscous layer as in \cite{xy}, thermal layer appears and coupled with the viscous layer linearly.
\end{abstract}

\maketitle
\section{Introduction}

The evolution of a compressible viscous heat conducting fluid occupying in the half-plane can be described by the density $\rho(t,x) \geq 0$, the velocity $\mathrm{u}(t,x) \in \mathbb{R}^2$, and the temperature $\theta(t,x) \geq 0$ obeying the following Navier-Stokes-Fourier system of equations:
\begin{equation}\label{NSF}
\begin{split}
 \pa_t \rho + \DIV (\rho  \mathrm{u} )&=0\,, \\
 \pa_t(\rho  \mathrm{u} )+ \DIV  (\rho  \mathrm{u} \otimes \mathrm{u} )+\grad p&= \DIV  \mathbb{S}\,,\\
 \pa_t(\rho  Q( \theta ))+  \DIV (\rho  Q( \theta )u )+\theta p_{\theta}  \DIV  \mathrm{u} &=\mathbb{S}:\nabla_x u - \DIV  q  \,,\\
\end{split}
\end{equation}
with $(t,x) \in \mathbb{R}_+ \times \Omega$. Here $\Omega=\{(x_1, x_2)\in \mathbb{R}^2, x_1>0\}$ with boundary $\Gamma=\pa \Omega =\{(x_1, x_2)\in \mathbb{R}^2, x_1=0\}.$ In the system \eqref{NSF}, $Q\in C^2[0,\infty)$ is a given function such that
$$Q(\theta)=\int_0^{\theta}c_v(z)\,\mathrm{d}z,\quad c_v(z)\geq c_v>0\quad \!\text{for}\quad\!\! z>0\,.$$
The quantities $p$, $\mathbb{S},$ and $q$ are determined in terms of $\rho$, $\grad \mathrm{u}$ and $\theta$ through constitutive equations: the first, Newton's law of viscosity,
$\mathbb{S}(\mathrm{u})=\mu(\nabla_x \mathrm{u}+\nabla_x \mathrm{u}^{\top})+\lambda  \DIV \mathrm{u}\mathbb{I},$ with constant viscosity coefficients $\mu, \lambda$ satisfying $ \mu > 0\,,$ $\xi=\lambda + \mu \geq 0\,.$ The second, the state equation $p=p_e(\rho) + \theta p_\theta(\rho)\,.$ The third, Fourier's law $q= -\kappa \grad \theta$ with the thermal diffusivity $\kappa >0$. More details of the derivation of the system \eqref{NSF} can be found in \cite{Feireisl}.

In this paper, we study the system \eqref{NSF} imposed with the non-slip boundary condition for any $T > 0$,
\begin{equation}\label{NSF nonslip BC}
\mathrm{u} =0,\;\;\;\; \theta = 0\quad\text{on}\quad\! \Gamma\times [0,T]
\end{equation}
and the initial data
\begin{equation}\label{NSF initial data}
(p, \mathrm{u}_1, \mathrm{u}_2, \theta )^{\top}(x,0)=(p_0,\mathrm{u}_{1,0},\mathrm{u}_{2,0}, \theta _0)^{\top}(x), \;\;\;x\in\Omega.
\end{equation}

On the other hand, the motion of an inviscid compressible fluid without thermal diffusivity is governed by the compressible Euler equations, which are obtained by formally taking the viscosity coefficients $\mu, \lambda$ and the thermal diffusivity $\kappa$ as zeros in \eqref{NSF}.
\begin{equation}\label{Euler}
\begin{split}
 \pa_t \rho + \DIV (\rho  \mathrm{u} )&=0\,, \\
 \pa_t(\rho  \mathrm{u} )+ \DIV  (\rho  \mathrm{u} \otimes \mathrm{u} )+\grad p&= 0\,,\\
 \pa_t(\rho  Q( \theta ))+  \DIV (\rho  Q( \theta )u )+\theta p_{\theta}  \DIV  \mathrm{u} &=0 \,.\\
\end{split}
\end{equation}
The boundary condition imposed on the compressible Euler equations \eqref{Euler} is
\begin{equation}\label{Euler BC}
 \mathrm{u}_1 = 0 \quad \mbox{on}\quad\! \Gamma \times [0,T]\,.
\end{equation}
We impose the same initial data for \eqref{Euler} as in \eqref{NSF}, i.e. \eqref{NSF initial data}.

In \cite{xy}, Xin and Yanagisawa studied the zero viscosity limit of the linearized Navier-Stokes equations for an isentropic compressible viscous fluid in the half plane. In other words, they considered the equations \eqref{NSF} without the energy equation, and the pressure $p=p(\rho)$ depending only on the density $\rho$. Under the assumption that the coefficients of viscosity $\mu$ and the bulk viscosity $\xi = \mu + \lambda$ have the same order $\eps^2$, they investigated the asymptotic behavior of the solution of an Dirichlet boundary value problem of the linearized Navier-Stokes equation as the parameter $\eps$ goes to zero. It is well-known that due to the disparity of the boundary conditions between Navier-Stokes and Euler equations, a thin region, the so-called boundary layer comes out near the boundary $\Gamma$ in which the values of the unknown functions change drastically in this zero viscosity limit.

In \cite{xy},  by clarifying the special structure of the boundary matrix of the Euler part of the linearized Navier-Stokes equations, Xin and Yanagisawa introduced the boundary characteristic variables and used the asymptotic analysis with multiple length scales to construct an approximate solution of the initial boundary value problem of the linearized Navier-Stokes equations, which included the inner and boundary layer terms. The first order term of the inner expansion is determined by the solution of the linearized Euler equations, i.e. acoustic system, while the the terms in the boundary expansion are solutions of a family of ODEs and Prandtl-type equations. Next, they used the energy method to show the pointwise error estimates of the approximate solution with respect to the viscosity, and derived the uniform stability results of the linearized Navier-Stokes solutions in the zero-viscosity limit.

The present work could be considered as a follow up of \cite{xy}. We also only study the linearized problem of the Navier-Stokes-Fourier equations for a compressible viscous fluid with thermal diffusivity. The major difficulty in the research of the boundary layers of the original nonlinear problem lies in the fact that the leading boundary layer terms satisfy the nonlinear Prandtl-type equations, for which even the local in time existence and regularity in usual Sobolev spaces are wide open problems so far. The only available zero-viscosity limit result for analytic solutions of the nonlinear incompressible Navier-Stokes equations in half-space is due to Sammartino and Caflisch \cite{SC-1}, \cite{SC-2}. The analogue of the same type results for the compressible Navier-Stokes equations is not known.

The purpose of the current paper aims to generalize Xin-Yanagisawa's result by adding the energy (or equivalently temperature) equation. We make the assumption that the thermal diffusivity $\kappa$ is proportional to $\eps^2$, the same order with viscosities. This assumption is for the simplicity of analyzing the structure of viscosity and thermal boundary layers. Physically, the viscosity and thermal diffusivity could have different order. For example, the viscosity is of order $\eps^2$ while the thermal diffusivity is of order $\eps^\gamma$ for some $\gamma > 0$. For this more physical case, even at the formal level, the viscous and thermal layers are not clear so far.

We would like to remark that the case that the viscosity and thermal diffusivity have the same order $\eps^2$ is physically meaningful and interesting. If we start from mesoscopic level of gas dynamics, say, the Boltzmann equation and set the Knudsen number as $\eps^2$. It can be derived formally under the assumption that the deviation from the global Maxwellian with size much smaller than the Knudsen number, the leading fluid dynamics is exactly the linearized Navier-Stokes-Fourier equations for ideal gas with the viscosity and thermal diffusivity of the same order $\eps^2$. In the work under preparation by the second author and N. Masmoudi \cite{J-M}, the acoustic dynamics of the linearized Boltzmann equation with Maxwell reflection boundary condition in half space is studied. The limiting process considered there includes the zero viscosity and thermal diffusivity limit in the current paper (for ideal gas).

For the linearized Euler equations, i.e. the acoustic system, the boundary condition is \eqref{Euler BC}, and no boundary condition for $\theta$. The disparity between the boundary conditions for the linearized Navier-Stokes-Fourier equations, \eqref{NSF nonslip BC} and the condition \eqref{Euler BC} suggest that during the limit $\eps\rightarrow 0$, both viscous and thermal boundary layers are generated.

In this paper, we employ the strategy used in \cite{xy}. We first construct an approximate solutions of the linearized Navier-Stokes-Fourier equations with non-slip boundary condition which includes inner and boundary layer terms. Then using the energy method, we established the pointwise estimates for the error terms, thus derive the uniform stability results for the linearized Navier-Stokes-Fourier solutions in the zero viscosity and thermal diffusivity limits. Our result in this paper is a preliminary consideration of the coupling thermal and viscous layers of the a compressible fluid with both viscosity and thermal diffusivity effects.

Comparing with \cite{xy}, the main novelty of this work is the appearance of the thermal layer. The key point is that because the viscosity and the thermal diffusivity have the same order, the viscous layer and thermal layer appear also appear at the same order, and more importantly, these two types of layers are coupled {\em linearly }. Technically, this fact is reflected by that in \cite{xy}, the viscous layer was described by a single Prandtl-type equation, while in the current work, the viscous and thermal layers are described by a linear system of two Prandtl-type equations. More importantly, the coupling of viscous and thermal layers are linear and weak in the sense that the coupling of the system is only on the unknown functions themselves, but not on their derivatives. This fact also make the analysis easier. We believe that for the case that the viscosity if of order $\eps^2$, while the thermal diffusivity is of order $\eps^\gamma$ with $0< \gamma \neq 2$, these two layers will strongly coupled together. We plan to study this interesting and challenging problem in the near future.

The organization of the paper is as follows: In the rest of this section, we introduce the setting of the problem and state the main theorem. In Section 3, using the method of multiple scales, the approximate solution of initial boundary problem of the linearized Navier-Stokes-Fourier equations \eqref{linearized NSF} is constructed. Section 3 is devoted to the estimates of the error term by energy method. In the last section, we collect some know results and prove the existence of the linear system of Prandtl-type equation.

\subsection{The Setting of the Problem and the Main Result} Now, we first set up the linearized problem of \eqref{NSF}-\eqref{NSF initial data}. Let $ V=(\rho,\mathrm{u}_1,\mathrm{u}_2, \theta )^{\top}$, we rewrite the \eqref{NSF} as the following symmetric form
\begin{equation}\label{symmetric NSF}
 A_0(V)\pa_t V+\sum_{j=1}^2A_j(V)\pa_j V= L V,
\end{equation}
where
\begin{equation*}
\begin{split}
 A_0(V)&=
 \begin{pmatrix}
 \frac{1}{\rho }&0&0&0\\
 0&\frac{\rho }{p_\rho}&0&0\\
 0&0&\frac{\rho }{p_\rho}&0\\
0&0&0&\beta
 \end{pmatrix}\,,\quad
 A_1(V)=
 \begin{pmatrix}
 \frac{\mathrm{u}_1}{\rho }&1&0&0\\
 1&\frac{\rho \mathrm{u}_1}{p_\rho}&0&\frac{p_\theta}{p_\rho}\\
 0&0&\frac{\rho \mathrm{u}_1}{p_\rho}&0\\
0&\frac{p_\theta}{p_\rho}&0&\beta \mathrm{u}_1
 \end{pmatrix}\,,\\
\end{split}
\end{equation*}
and
\begin{equation*}
\begin{split}
 A_2(V)&=
 \begin{pmatrix}
 \frac{\mathrm{u}_2}{\rho }&0&1&0\\
 0&\frac{\rho \mathrm{u}_2}{p_\rho}&0&0\\
 1&0&\frac{\rho \mathrm{u}_2}{p_\rho}&\frac{p_\theta}{p_\rho}\\
0&0&\frac{p_\theta}{p_\rho}&\beta  \mathrm{u}_2
 \end{pmatrix}\,, \quad
L V=
 \begin{pmatrix}
 0\\
\tfrac{\mu}{p_\rho} \LAP \mathrm{u}_1+ \tfrac{\xi}{p_\rho}(\pa_{11}\mathrm{u}_1+\pa_{12}\mathrm{u}_2)\\
\tfrac{\mu}{p_\rho}\LAP \mathrm{u}_2+\tfrac{\xi}{p_\rho}(\pa_{21}\mathrm{u}_1+\pa_{22}\mathrm{u}_2)\\
\tfrac{\kappa}{ \theta p_\rho} \LAP \theta + \tfrac{1}{\theta p_\rho}\mathbb{S}:\nabla_x \mathrm{u} )
 \end{pmatrix}
\end{split}
\end{equation*}
where $\beta=\frac{\rho c_v( \theta )}{ \theta p_\rho}$, $p_\rho\,, p_\theta$ denote the partial derivatives $\frac{\del p}{\del \rho}\,, \frac{\del p}{\del \theta}$, and $\partial_{ij}$ denotes $\partial_{x_i x_j}$.

Let $\mu=\overline{\mu}\varepsilon^2,\xi= \overline{\xi}\varepsilon^2= (\overline{\mu}+ \overline{\lambda})\eps^2$ and $\kappa= \overline{\kappa}\varepsilon^2 $
where $\varepsilon$ is a small positive parameter, the constants $\overline{\mu}, \overline{\lambda}, \overline{\kappa}$ are of order $1$ and independent of $\varepsilon$. We linearize equations \eqref{NSF} around smooth functions $V'(t)=(\rho',\mathrm{u}_1',\mathrm{u}_2',\theta')$ which is a solution to the equations
\eqref{NSF} for $t\in [0,T]$ for some $T > 0$. Then the linearized equations of \eqref{NSF} can be written as equations for $V^\eps=(\rho^\eps,v^\eps_1,v^\eps_2,\theta^\eps)^{\top}$:
\begin{equation}\label{linearized NSF}
\begin{aligned}
 A_0(V')\pa_t V^\eps +\sum_{j=1}^2A_j(V')\pa_j V^\eps &= D_\eps V^\eps \quad \mbox{in}\quad\! \Omega \times [0,T]\,,\\
M^+ V^\eps & = 0 \quad \mbox{on}\quad\! \Gamma \times [0,T]\,, \\
V^\eps(x,0)&=(\rho_0,v_{1,0},v_{2,0},\theta_0)^{\top}(x)= V_0 (x)\quad \mbox{for}\quad\! x\in \Omega\,,
\end{aligned}
\end{equation}
where
\begin{equation}\nonumber
\begin{aligned}
  D_\eps V = \eps^2\begin{pmatrix}
                0\\
                \tfrac{\overline{\mu}}{p'_\rho}\LAP v_1 + \tfrac{\overline{\xi}}{p'_\rho}(\partial_{11}v_1 + \partial_{12}v_2)\\
                \tfrac{\overline{\mu}}{p'_\rho}\LAP v_2 + \tfrac{\overline{\xi}}{p'_\rho}(\partial_{21}v_1 + \partial_{22}v_2)\\
                \tfrac{\overline{\kappa}}{\theta' p'_\rho} \LAP \theta + I(V)
             \end{pmatrix}\,,\quad  M^+ = \begin{pmatrix}
          0&1&0&0\\
          0&0&1&0\\
          0&0&0&1
       \end{pmatrix}\,,
\end{aligned}
\end{equation}
and
\begin{equation}\nonumber
  I(V)=  \tfrac{2\overline{\mu}}{\theta' p'_\rho}(\partial_i\mathrm{u}'_j + \partial_j\mathrm{u}'_i)\partial_i v_j + \tfrac{2\overline{\lambda}}{\theta' p'_\rho}(\mathrm{div}\mathrm{u}') (\mathrm{div} v)\,.
\end{equation}
We can rewrite $D_\eps V$ as matrices form:
\begin{equation}\nonumber
D_\eps V = \eps^2 \begin{pmatrix}
                     0&0&0&0 \\
                     0&\tfrac{\overline{\mu}}{p'_{\rho}}&0&0 \\
                     0&0&\tfrac{\overline{\mu}}{p'_{\rho}}&0 \\
                     0&0&0& \tfrac{\overline{\kappa}}{\theta' p'_{\rho}}
                  \end{pmatrix}\Delta V + \eps^2 \tfrac{\overline{\xi}}{p'_{\rho}}
                  \begin{pmatrix}
                     0&0&0&0 \\
                     0&\partial_{11}&\partial_{12}&0 \\
                     0&\partial_{21}&\partial_{22}&0 \\
                     0&0&0&0
                  \end{pmatrix}V
                  + \eps^2 \sum^2_{j=1}I_j \partial_j V \,,
\end{equation}
where
\begin{equation}
 I_1 = \begin{pmatrix}
           0&0&0&0 \\
           0&0&0&0 \\
           0&0&0&0 \\
           0& 2\tfrac{\overline{\mu}}{\theta' p'_\rho}\partial_1 \mathrm{u}'_1+ \tfrac{\overline{\lambda}}{\theta' p'_\rho}\mathrm{div}\mathrm{u}'& \tfrac{\overline{\mu}}{\theta' p'_\rho}(\partial_1 \mathrm{u}'_2+\partial_2 \mathrm{u}'_1)& 0
       \end{pmatrix}\,,
\end{equation}
and
\begin{equation}
 I_2 = \begin{pmatrix}
           0&0&0&0 \\
           0&0&0&0 \\
           0&0&0&0 \\
           0& \tfrac{\overline{\mu}}{\theta' p'_\rho}(\partial_1 \mathrm{u}'_2+\partial_2 \mathrm{u}'_1)&2\tfrac{\overline{\mu}}{\theta' p'_\rho}\partial_2 \mathrm{u}'_2+ \tfrac{\overline{\lambda}}{\theta' p'_\rho}\mathrm{div}\mathrm{u}'& 0
       \end{pmatrix}\,.
\end{equation}

The corresponding initial boundary value problems of the linearized Euler equation are
\begin{equation}\label{linearized Euler}
\begin{aligned}
A_0(V')\pa_t V^0+\sum_{j=1}^2A_j(V')\pa_j V^0&=0 \quad \mbox{in}\quad\! \Omega \times [0,T]\,,\\
M^0 V^0& = 0 \quad \mbox{on}\quad\! \Gamma \times [0,T]\,, \\
V^0(x,0)& = V_0 (x)\quad \mbox{for}\quad\! x\in \Omega\,,
\end{aligned}
\end{equation}
where $M^0= (0, 1, 0, 0)$.

Before we state our results, we introduce some function spaces and the notion of compatibility condition. Let $m \geq 0$ be an integer and $U \subset \mathbb{R}^d, d \geq 1$ be a domain. Then $H^m(U)$ denotes the usual Sobolev space of order $m$ equipped with the norm $\|\cdot \|_{m,U}$ and the inner product $(\cdot, \cdot)_{m,U}$. For $0 < \alpha < 1$, $C^\alpha(U)$ denotes the H\"{o}lder space on $\overline{U}$ with the exponent $\alpha$, endowed with the norm $\|\cdot\|_{C^\alpha(U)}$. The space $C^m(I\,; X)$ denotes the set of functions $u(t)$, $t \in I$, the $m\mbox{-}$times continuously differentiable functions on the interval $I$ with value taken in the Banach space $X$.

To study the initial boundary value problem \eqref{linearized NSF}, we need the following compatibility condition: Define inductively the n-Cauchy data of \eqref{linearized NSF} by
\begin{equation}\label{inductive compatbility}
\begin{split}
\dot{\pa}^0_t V^\eps(x,0)=&V^\mathrm{in}(x),\\
\dot{\pa}^n_t V^\eps(x,0)=&\sum^{n-1}_{s=0}C^s_{n-1}\{-\sum^2_{j=1}\pa^s_t(A^{-1}_0(V')A_j(V'))(x,0)
\dot{\pa}^{n-1-s}_t V^\eps(x,0)\\&+\pa^s_t A^{-1}_0(V')\dot{\pa}^{n-1-s}_t B(\eps^2, c_1\eps^2, c_2\eps^2)V^\eps(x,0)\}\quad \mbox{for}\quad\! n=1,2,\cdots
\end{split}
\end{equation}
The initial data $V^\mathrm{in}(x)$ is said to satisfy the compatibility condition of
order $m$ for the initial boundary value problem \eqref{linearized NSF} for any $\varepsilon>0$ if $M^+ \dot{\partial}^n_t V^\eps(x,0)=0$, i.e.
\begin{equation}\label{compatibility C}
\dot{\pa}^n_t v_1(x,0)=\dot{\pa}^n_t v_2(x,0)=\dot{\pa}^n_t \theta(x,0) \quad \mbox{on}\quad\!\Gamma\,,\quad n=0,1,\cdots,m,\quad\mbox{for any}\quad\! \eps >0\,.
\end{equation}
The condition \eqref{compatibility C} implies the corresponding compatibility condition of order $m$ of the initial boundary value problem for linearized
Euler equation \eqref{linearized Euler}:
\begin{equation}
M^0 \dot{\partial}^n_t V^0(x,0) = 0\quad\mbox{on}\quad\! \Gamma\,, \quad\! n=0,1,\cdots, m\,,
\end{equation}
where $\dot{\partial}^n_t V^0(x,0)$ are defined by \eqref{inductive compatbility} in which $\eps$ is taken as zero.

Then we state the main theorem of this paper
\begin{theorem}
Let $m$ be an integer satisfies $m\geq 2(7N+4)$. Suppose that the initial data $V_0\in H^m(\Omega)$ satisfies the
compatibility condition of order $[\frac{m}{2}]-1$ for \eqref{linearized NSF} for any $\varepsilon>0$ and
the compatibility condition of order m-1 for \eqref{linearized Euler}.
Then the solution $V=(\rho,v_1,v_2,\theta)^{\top}$ of problem \eqref{linearized NSF}, the solution
$E=(\rho^E,v^E_1,v^E_2,\theta^E)^{\top}$ of \eqref{linearized Euler} and the correcting term
$K^{\varepsilon}=U^{\varepsilon}-E,$ $W^{\varepsilon}$ is defined in \eqref{expansion}, exist
uniquely in the following spaces:
\begin{equation}
V\in \bigcap^{[m/2]}_{j=0}C^j([0,T];H^{m-2j}(\Omega)),\;\;\;\;E\in\bigcap^{m}_{j=0}C^j([0,T];H^{m-j}(\Omega)),
\end{equation}
\begin{equation}
K^{\varepsilon}\in \bigcap^{[m/2]-1-7N}_{j=0}C^j([0,T];H^{[m/2-1-7N-j]}(\Omega)),
\end{equation}
and for $0<\varepsilon<1,$ there exist constants  $C_1$ and $C_2$ which are independent
 of $\varepsilon$, such that the following estimate hold:
\begin{equation}
\sup_{(x,t)\in \Omega\times [0,T]}|\rho(x,t)-\rho^E(x,t)-K^{\varepsilon}_0(x,t)|\leq C_1 \varepsilon^{N-1},
\end{equation}
\begin{equation}
\sup_{(x,t)\in \Omega\times [0,T]}|v_j(x,t)-v_j^E(x,t)-K^{\varepsilon}_j(x,t)|\leq C_2 \varepsilon^{N-3/4},\;\;j=1.2,
\end{equation}
and
\begin{equation}
\sup_{(x,t)\in \Omega\times [0,T]}|\theta(x,t)-\theta^E(x,t)-K^{\varepsilon}_3(x,t)|\leq C_2 \varepsilon^{N-3/4}.
\end{equation}
\end{theorem}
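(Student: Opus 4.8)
The plan is to follow the two-stage strategy of \cite{xy}: first construct an approximate solution $U^\eps$ to \eqref{linearized NSF} by a matched asymptotic expansion in $\eps$, and then control the error $W^\eps:=V^\eps-U^\eps$ (where $V^\eps$ is the exact solution denoted $V$ in the statement) by an energy method, finally passing from the resulting Sobolev bounds to the pointwise estimates. Existence of $V^\eps$ itself follows from the linear theory for \eqref{linearized NSF}, which is hyperbolic in the density and parabolic in the velocity and temperature, and existence of the Euler profile $E$ from the standard theory for the symmetric hyperbolic system \eqref{linearized Euler}; the function spaces in the statement then emerge as bookkeeping of how many derivatives survive at each stage. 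The interior solution loses two spatial derivatives per $t$-derivative because $D_\eps$ is second order, the Euler profile keeps the full regularity of the data, and the corrector $K^\eps=U^\eps-E$ loses an additional $7N$ derivatives, reflecting the $N$ orders of the expansion with a fixed derivative cost per boundary profile (this is where $m\geq 2(7N+4)$ enters).

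First I would set up the expansion. Writing $z=x_1/\eps$ for the stretched normal variable, I seek
\[
U^\eps(t,x)=\sum_{k=0}^{N}\eps^k\,u_k(t,x)+\sum_{k=0}^{N}\eps^k\,w_k\!\Big(t,\tfrac{x_1}{\eps},x_2\Big),
\]
with each $w_k(t,z,x_2)\to 0$ as $z\to\infty$. Substituting into \eqref{linearized NSF} and collecting powers of $\eps$ separately in the interior and in the layer, the leading interior profile is forced to solve the acoustic system, so $u_0=E$, while the higher $u_k$ solve the same hyperbolic operator with lower-order forcing. In the layer, the characteristic structure of the boundary matrix $A_1(V')$ identified in \cite{xy} decouples the fast variable: the density component is recovered from ODEs in $z$ by quadrature, whereas the tangential velocity and temperature solve a linear system of two Prandtl-type equations. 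The new point, because $\mu,\xi,\kappa$ all scale like $\eps^2$, is that the viscous and thermal layers appear at the same order and couple, but only through zeroth-order (non-differentiated) terms; hence the coupled Prandtl system is linear with lower-order coupling, and its solvability in the required Sobolev class is exactly what the final section supplies. I would choose the data for each $w_k$ so that $M^+(u_k+w_k)|_{\Gamma}=0$, matching \eqref{NSF nonslip BC} order by order, and check that the assumed compatibility conditions on $V_0$ give the initial regularity needed to solve for every profile.

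Next I would insert $U^\eps$ back into \eqref{linearized NSF}. By construction the residual $R^\eps:=A_0(V')\pa_t U^\eps+\sum_{j}A_j(V')\pa_j U^\eps-D_\eps U^\eps$ is of high order in the relevant norms (the layer terms contribute $\eps^{-1}$-weighted normal derivatives, which is why one must expand to order $N$ rather than to order one). The error $W^\eps$ then solves the same linearized system with forcing $-R^\eps$ and homogeneous boundary and, up to the truncation, initial data. I would run the energy estimate by pairing the $W^\eps$-equation with $A_0(V')W^\eps$: the symmetric hyperbolic part yields a controlled $\tfrac{d}{dt}\|W^\eps\|^2$, the boundary contribution $\langle A_1(V')W^\eps,W^\eps\rangle|_{\Gamma}$ is handled using $M^+W^\eps=0$ together with the structure of $A_1(V')$, the Dirichlet condition makes the viscous boundary terms vanish, and $D_\eps$ supplies the dissipation $\eps^2(\|\grad v^\eps\|^2+\|\grad\theta^\eps\|^2)$. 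Differentiating in $t$ and repeating controls higher conormal norms, and Gronwall then gives $\|W^\eps\|_{L^\infty_t H^s_x}$ small in $\eps$ for an $s$ determined by $m$ and $N$.

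The main obstacle, and the step I would weigh most carefully, is passing from these $L^2$-type bounds to the pointwise estimates with the stated rates, uniformly in $\eps$. The velocity and temperature errors benefit from the parabolic dissipation above, which after interpolation with the boundary-layer-weighted Sobolev embedding in two dimensions yields the sharper $\eps^{N-3/4}$ rate; the density, governed by the transport part of the system with no dissipation available, borrows no such smoothing, and tracking this asymmetry is what produces the weaker $\eps^{N-1}$ rate. Closing the estimate uniformly in $\eps$ — keeping the boundary contributions non-positive and absorbing the layer-induced forcing into the dissipation without generating $\eps$-dependent constants — is the crux, and it is here that the weak, lower-order nature of the viscous--thermal coupling is essential: it prevents the two layers from exchanging derivatives and keeps the system effectively triangular for the estimate.
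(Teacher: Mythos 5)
Your proposal follows essentially the same route as the paper: a matched inner/boundary-layer expansion built on the characteristic structure of the boundary matrix $A_1(V')$ (with the zero-speed components of the layer solving a linearly coupled system of two Prandtl-type equations and the nonzero-speed components recovered by quadrature in the fast variable), followed by tangential/conormal energy estimates on the error and an anisotropic Sobolev embedding to convert the $L^2$ bounds into the pointwise rates, with the $\eps^{N-1}$ versus $\eps^{N-3/4}$ asymmetry traced to the absence of dissipation in the density equation exactly as in the paper. The only imprecision is in labeling which physical components fall into the ODE versus Prandtl classes — in the characteristic variables the Prandtl pair is the tangential velocity $v_2$ together with the combination $(\rho-\theta)/\sqrt{\alpha^2+1}$, while the ODE pair mixes $\rho$, $v_1$ and $\theta$ — but this does not affect the validity of the strategy.
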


\section{Construction of an Approximate Solution}
Throughout this section, we denote the solution of \eqref{linearized NSF} by $V$ instead of $V^\eps$ for simplicity.
\subsection{Boundary Characteristic Variables}
In the isentropic case \cite{xy}, the matrix $A_1(V')$ is diagonalized. In this paper, for the non-isentropic case, this diagonalization process is not easy. Instead, we decompose $A_1(V')$ into two parts: one is easy to be diagonalized, the other vanishes on the boundary $\Gamma$. Let $A_{1m}= A_{1m}(x_2,t)=A_1(V'(0,x_2,t))$, then
\begin{equation}\label{11}
\begin{split}
A_1(V')&=A_{1m}(V')+A_{1r}(V')\\& =
\begin{pmatrix}
0&1&0&0\\
1&0&0&\alpha\\
0&0&0&0\\
0&\alpha&0&0
\end{pmatrix}
+
\begin{pmatrix}
\frac{\mathrm{u}'_1}{\rho'}&0&0&0\\
0&\frac{\rho'\mathrm{u}'_1}{p'_{\rho}}&0&\frac{p'_{\theta}}{p'_{\rho}}-\alpha\\
0&0&\frac{\rho'\mathrm{u}'_1}{p'_{\rho}}&0\\
0&\frac{p'_{\theta}}{p'_{\rho}}-\alpha &0&\beta'\mathrm{u}'_1
\end{pmatrix}
\end{split}
\end{equation}
where $\alpha=\alpha(x_2,t)=\frac{p'_{\theta}}{p'_\rho}(0,x_2,t)$ is valued on the boundary $\Gamma= \{(0,x_2): x_2\in \mathbb{R}\}$. Note that the matrix $A_{1r}$ vanishes on the boundary $\Gamma \times [0,T]$. In other words, $A_{1r}$ does not contribute the nonzero eigenvalues and eigenvectors of $A_1$ on the boundary. In this sense, we call $A_{1m}$ the main part of the matrix $A_1(V')$ of the hyperbolic part of the equation \eqref{linearized NSF}.

Simple calculations show that the eigenvalues of $A_{1m}$ are $\lambda_0=\lambda_1=0$, $\lambda_2= \sqrt{\alpha^2 + 1}$, and $\lambda_3= - \sqrt{\alpha^2 + 1}$. Note that for the isentropic case $p_\theta=0$, so the eigenvalues are $0, 1, -1$, which are reduced to the case considered in \cite{xy}. The corresponding right orthonormal eigenvector of $A_{1m}$ are given by
\begin{equation}\label{12}
\begin{split}
e_0&=\left(0,0,1,0\right)^\top,\\
e_1&=\left(\tfrac{\alpha}{\sqrt{\alpha^2+1}},0,0,\tfrac{-1}{\sqrt{\alpha^2+1}}\right)^\top,\\
e_2&=\left(\tfrac{1}{\sqrt{2(\alpha^2+1)}},\tfrac{1}{\sqrt{2}},0,\tfrac{\alpha}{\sqrt{2(\alpha^2+1)}}\right)^\top,\\
e_3&=\left(\tfrac{1}{\sqrt{2(\alpha^2+1)}},-\tfrac{1}{\sqrt{2}},0,\tfrac{\alpha}{\sqrt{2(\alpha^2+1)}}\right)^\top.
\end{split}
\end{equation}
Let $Q= (e_0, e_1, e_2, e_3)^\top$, then $Q$ is an orthogonal matrix, $Q^{-1}= Q^\top$. We define the boundary characteristic variables by
\begin{equation}\nonumber
U=\begin{pmatrix}\mathrm{u}_0\\ \mathrm{u}_1\\ \mathrm{u}_2\\ \mathrm{u}_3\end{pmatrix}=QV=
\begin{pmatrix}
v_2\\\tfrac{\rho-\theta}{\sqrt{\alpha^2 + 1}}\\ \tfrac{\rho -\alpha \theta}{\sqrt{2(\alpha^2+1)}}+ \tfrac{v_1}{2}\\ \tfrac{\rho -\alpha \theta}{\sqrt{2(\alpha^2+1)}}- \tfrac{v_1}{2}
\end{pmatrix}\,,
\end{equation}
in terms of which the linearized Navier-Stokes-Fourier equations \eqref{linearized NSF} can be transformed into
\begin{equation}\label{boundary variable U equa}
\mathcal{A}_0 \pa_t U - \mathcal{L}^\eps U = 0\,,
\end{equation}
where
\begin{equation}\label{L-eps}
\begin{aligned}
 \mathcal{L}^\eps=& \sum^2_{j=1}\left\{-\mathcal{A}_j(x,t) + \eps^2 \mathcal{P}_j(x,t) + \eps^2 \mathcal{I}_j(x,t)\right\}\partial_j\\
 &  + \left( -\mathcal{W}(x,t) + \eps^2 \mathcal{Q}^1(x,t) + \eps^2\mathcal{Q}^2(x,t)\right) + \eps^2 \mathcal{G}(x,t)\Delta + \eps^2 \overline{\xi} \sum^2_{i,j=1}\mathcal{G}^{ij}\partial_{ij}\,.
\end{aligned}
\end{equation}
Here
\begin{equation*}
\begin{split}
&\mathcal{A}_0(x,t)=QA_0Q^{-1}=\\&\begin{pmatrix}
 \tfrac{\rho'}{p'_{\rho}}&0&0&0\\
 0&\tfrac{1}{\alpha^2+1}(t\frac{\alpha^2}{\rho'}+\beta')&
 \tfrac{1}{\alpha^2+1}(\tfrac{\alpha}{\sqrt{2}\rho'}-\tfrac{\alpha\beta'}{\sqrt{2}})
 &\tfrac{1}{\alpha^2+1}(\tfrac{\alpha}{\sqrt{2}\rho'}-\tfrac{\alpha\beta'}{\sqrt{2}})\\
 0&\tfrac{1}{\alpha^2+1}(\tfrac{\alpha}{\sqrt{2}\rho'}-\tfrac{\alpha\beta'}{\sqrt{2}})&\tfrac{1}{\alpha^2+1}
 (\tfrac{1}{2\rho'}+\tfrac{\alpha^2\beta'}{2})+\tfrac{\rho'}{2p'_{\rho}}
 &\tfrac{1}{\alpha^2+1}
 (\tfrac{1}{2\rho'}+\tfrac{\alpha^2\beta'}{2})-\tfrac{\rho'}{2p'_{\rho}}\\
0&\tfrac{1}{\alpha^2+1}(\tfrac{\alpha}{\sqrt{2}\rho'}-\tfrac{\alpha\beta'}{\sqrt{2}})&\tfrac{1}{\alpha^2+1}
 (\tfrac{1}{2\rho'}+\tfrac{\alpha^2\beta'}{2})-\tfrac{\rho'}{2p'_{\rho}}&\tfrac{1}{\alpha^2+1}
 (\tfrac{1}{2\rho'}+\tfrac{\alpha^2\beta'}{2})+\tfrac{\rho'}{2p'_{\rho}}
 \end{pmatrix}\\
&= \begin{pmatrix}
 \tfrac{\rho'}{p'_{\rho}}&0&0&0\\
 0&\eta_0&\eta_1&\eta_1\\
 0&\eta_1&\eta_2&\eta_3\\
 0&\eta_1&\eta_3&\eta_2
 \end{pmatrix}\,.
\end{split}
\end{equation*}
and $\mathcal{A}_1(x,t)= \mathcal{A}_{1m} + \mathcal{A}_{1r}$,
\begin{equation}\label{A-1m}
\mathcal{A}_{1m}=QA_{1m}Q^{-1}=
\begin{pmatrix}
 0&0&0&0\\
 0&0&0&0\\
 0&0&\sqrt{\alpha^2+1}&0\\
0&0&0&-\sqrt{\alpha^2+1}
 \end{pmatrix},
\end{equation}
$$
\mathcal{A}_{1r}=QA_{1r}Q^{-1}\,,\quad \mathcal{A}_2=QA_2Q^{-1}\,.
$$
\begin{equation*}
\mathcal{P}_j(x,t)=2Q
\begin{pmatrix}
0&0&0&0\\
0&\frac{\overline{\mu}}{p'_{\rho}}&0&0\\
0&0&\frac{\overline{\mu}}{p'_{\rho}}&0\\
0&0&0&\frac{\overline{\kappa}}{\theta'p'_{\rho}}
\end{pmatrix}
\pa_j Q^{-1}\,,\quad \mathcal{I}_j(x,t) = Q I_j Q^{-1}\,,
\end{equation*}
\begin{equation*}
\mathcal{W}(x,t)=QA_0\pa_tQ^{-1}+ \sum^2_{j=1}QA_j\pa_j Q^{-1}\,,
\end{equation*}
\begin{equation*}
\mathcal{Q}^1(x,t)= Q
\begin{pmatrix}
0&0&0&0\\
0&\frac{1}{p'_{\rho}}&0&0\\
0&0&\frac{1}{p'_{\rho}}&0\\
0&0&0&\frac{\kappa_0(\theta')}{\theta'p'_{\rho}}
\end{pmatrix}
\Delta Q^{-1}\,, \quad \mathcal{Q}^2(x,t)= \sum^2_{j=1}Q I_j \partial_j Q^{-1}\,,
\end{equation*}
\begin{equation*}
\begin{split}
\mathcal{G}(x,t)&=Q\begin{pmatrix}
0&0&0&0\\
0&\frac{1}{p'_{\rho}}&0&0\\
0&0&\frac{1}{p'_{\rho}}&0\\
0&0&0&\frac{\kappa_0(\theta')}{\theta'p'_{\rho}}
\end{pmatrix}Q^{-1}\\
&=
\begin{pmatrix}
\frac{1}{p'_{\rho}}&0&0&0\\
0&\frac{\kappa_0(\theta')}{\theta'p'_{\rho}(\alpha^2+1)}&\frac{-\alpha \kappa_0(\theta')}{\sqrt{2}(\alpha^2+1)\theta'p'_{\rho}}
&\frac{-\alpha\kappa_0(\theta')}{\sqrt{2}(\alpha^2+1)\theta'p'_{\rho}}\\
0&\frac{-\alpha \kappa_0(\theta')}{\sqrt{2}(\alpha^2+1)\theta'p'_{\rho}}&\frac{1}{2p'_{\rho}}+\frac{\alpha^2 \kappa_0(\theta')}
{2(\alpha^2+1)\theta'p'_{\rho}}&-\frac{1}{2p'_{\rho}}+\frac{\alpha^2 \kappa_0(\theta')}
{2(\alpha^2+1)\theta'p'_{\rho}}\\
0&\frac{-\alpha \kappa_0(\theta')}{\sqrt{2}(\alpha^2+1)\theta'p'_{\rho}}&-\frac{1}{2p'_{\rho}}+\frac{\alpha^2 \kappa_0(\theta')}
{2(\alpha^2+1)\theta'p'_{\rho}}&\frac{1}{2p'_{\rho}}+\frac{\alpha^2 \kappa_0(\theta')}
{2(\alpha^2+1)\theta'p'_{\rho}}
\end{pmatrix}\,,
\end{split}
\end{equation*}
$$
\mathcal{G}^{11}=
\begin{pmatrix}
0&0&0&0\\
0&0&0&0\\
0&0&\frac{1}{2}&-\frac{1}{2}\\
0&0&-\frac{1}{2}&\frac{1}{2}
\end{pmatrix},
\;\;\;
\mathcal{G}^{22}=
\begin{pmatrix}
1&0&0&0\\
0&0&0&0\\
0&0&0&0\\
0&0&0&0
\end{pmatrix},
$$
$$
\mathcal{G}^{12}=\mathcal{G}^{21}=
\begin{pmatrix}
0&0&\frac{\sqrt{2}}{4}&-\frac{\sqrt{2}}{4}\\
0&0&0&0\\
\frac{\sqrt{2}}{4}&0&0&0\\
-\frac{\sqrt{2}}{4}&0&0&0
\end{pmatrix}\,.
$$
We can also rewrite $\mathcal{L}^\eps$ as
$$
\mathcal{L}^\eps = \mathcal{L}^0 + \eps^2 \Lambda\,,
$$
where $\mathcal{L}^0= -\mathcal{A}_j \partial_j - \mathcal{W}$, and
$
\Lambda =\left( \mathcal{Q}^1 + \mathcal{Q}^2\right) +\sum^2_{j=1}\left( \mathcal{P}_j + \mathcal{I}_j\right)\partial_j + \left(\mathcal{G}\Delta +\sum^2_{j=1} \mathcal{G}^{ij}\partial_{ij}\right)\,.
$

The boundary condition of \eqref{linearized NSF} can be rewritten in terms of the boundary characteristic variables as:
\begin{equation}\label{boundary variable BC}
\mathcal{M}^+ U = 0\quad\mbox{on}\quad\! \Gamma \times [0,T]\quad \mbox{where}
\quad\! \mathcal{M}^+=M^+Q^{-1}= \begin{pmatrix}
                           0&0&\tfrac{1}{\sqrt{2}}&\tfrac{-1}{\sqrt{2}}\\
                           1&0&0&0\\
                           0&\tfrac{-1}{\sqrt{\alpha^2+1}}&\tfrac{\alpha}{\sqrt{2(\alpha^2+1)}}&\tfrac{\alpha}{\sqrt{2(\alpha^2+1)}}\\
                       \end{pmatrix}\,,
\end{equation}
i.e.
\begin{equation}\nonumber
\begin{split}
\mathrm{u}_2-\mathrm{u}_3=0\,,\quad \mathrm{u}_0=0\,,\quad -\mathrm{u}_1+\tfrac{\alpha}{\sqrt{2}} \mathrm{u}_2+\tfrac{\alpha}{\sqrt{2}} \mathrm{u}_3=0\,,\quad\text{on}\quad\!\Gamma\times[0,T]\,,
\end{split}
\end{equation}
which can be simplified as
\begin{equation}\label{boundary variable BC-1}
\mathrm{u}_2=\mathrm{u}_3\,,\quad \mathrm{u}_0=0\,,\quad \mathrm{u}_1 = \sqrt{2}\alpha \mathrm{u}_2 \,,\quad\text{on}\quad\!\Gamma\times[0,T]\,.
\end{equation}
The initial conditions of \eqref{linearized NSF} can be rewritten as:
\begin{equation}\label{boundary variable IC}
U(x,0)=QV_0= U_0(x)\,,\quad \text{for}\quad\! x\in\Omega.
\end{equation}
Furthermore, the boundary condition for the linearized Euler equations \eqref{linearized Euler} becomes
\begin{equation}\label{boundary variable BC-Euler}
\mathcal{M}^0 U^0= M^0Q^{-1} U^0 = 0\quad \mbox{on}\quad\! \Gamma \times [0,T]\,,
\end{equation}
i.e.
\begin{equation}\label{boundary variable BC-Euler-1}
\begin{split}
\mathrm{u}_2-\mathrm{u}_3=0\,,\quad\text{on}\quad\!\Gamma\times[0,T]\,.
\end{split}
\end{equation}
It is clear that the initial data $U_0$ satisfies the compatibility condition of order $l \geq 0$ for \eqref{boundary variable U equa}, \eqref{boundary variable BC} and \eqref{boundary variable IC} for any $\eps > 0$ if and only if $V_0$ satisfies the compatibility condition of order $l \geq 0$ for any $\eps >0$. The same statement is also true for the linearized Euler case.

\subsection{Formal Inner and Boundary Expansions}
We construct the approximate solution $U^{\varepsilon}$ of equation \eqref{boundary variable U equa} with boundary and initial conditions \eqref{boundary variable BC}-\eqref{boundary variable IC} as
\begin{equation}\label{expansion}
\begin{aligned}
W^\eps(x,t)& = E^{\varepsilon}(x,t)+B^{\varepsilon}(x,t)\\
& =\sum^N_{i=0}\varepsilon^i E^i(x_1,x_2,t)+\sum^N_{i=0}
\varepsilon^i B^i\left(\frac{x_1}{\varepsilon},x_2,t\right)\,.
\end{aligned}
\end{equation}
Formally, for the inner term $E^\eps$,
\begin{equation}\label{expansion inner}
\begin{aligned}
(\mathcal{A}_0\partial_t-\mathcal{L}^\eps)E^\eps
= & (\mathcal{A}_0\partial_t-\mathcal{L}^0)E^0 + \eps (\mathcal{A}_0\partial_t-\mathcal{L}^0)E^1\\
+&\sum^N_{i=2}\eps^i  \left\{(\mathcal{A}_0\partial_t-\mathcal{L}^0) E^i - \Lambda E^{i-2}\right\} - \eps^{N+1}\Lambda E^{N-1} - \eps^{N+2}\Lambda E^N\,.
\end{aligned}
\end{equation}
For the boundary term $B(\frac{x_1}{\eps}, x_2, t)$,
\begin{equation}
\begin{aligned}
 (\mathcal{A}_0\partial_t-\mathcal{L}^\eps)B =& \frac{1}{\eps}\mathcal{A}_1 \partial_{z_1}B + \left\{ \mathcal{A}_0\partial_t + \mathcal{A}_2 \partial_{z_2}-(\mathcal{G}+ \mathcal{G}^{11})\partial_{z_1z_1} + \mathcal{W}\right\}B\\
 - & \eps \left\{ 2\mathcal{G}^{12}\partial_{z_1z_2}+ (\mathcal{P}_1 + \mathcal{I}_1)\partial_{z_1}\right\}B\\
 - & \eps^2 \left\{(\mathcal{G}+ \mathcal{G}^{22})\partial_{z_2z_2}+ (\mathcal{P}_2 + \mathcal{I}_2)\partial_{z_2} + (\mathcal{Q}^1 + \mathcal{Q}^2) \right\}B\,.
\end{aligned}
\end{equation}
Note that in the above expansion of the differential operator $\mathcal{A}_0\partial_t-\mathcal{L}^\eps$, all the coefficients are functions of $(x_1, x_2,t)=(\eps z_1, z_2, t)$. In the construction of the boundary layer part of the approximate solution $B^\eps$, we only concern the values on the boundary $\Gamma \times [0,T]$. So we expand all the coefficients around $(0,z_2,t)$ as follows: We use the notation that, for a smooth matrix-valued function $\mathcal{K}(x_1,x_2,t)=\mathcal{K}(\varepsilon z_1,z_2,t)$, the Taylor expansion around $(0,z_2, t)$ is
 \begin{equation*}
\begin{split}
& \mathcal{K}(\varepsilon z_1,z_2,t)\\=&\sum_{i=0}^N(\varepsilon z_1)^i \frac{\pa^i_{x_1} \mathcal{K}(0,z_2,t)}{i!}+
\frac{(\varepsilon z_1)^{N+1}}{N!}\int_0^1\pa^{N+1}_{x_1}\mathcal{K}(\varepsilon z_1 \xi,z_2,t))(1-\xi)^Nd\xi\\
= &\sum_{i=0}^N(\varepsilon z_1)^i \mathcal{K}^i(z_2,t)+(\varepsilon z_1)^{N+1}\mathcal{K}^{N+1}_R\,.
\end{split}
\end{equation*}
In particular, $
\mathcal{A}_1(\eps z_1, z_2, t)= \mathcal{A}_{1m}(z_2,t)+ \frac{1}{i!}\sum_{i=0}^N(\eps z_1)^i \partial^i_{x_1}\mathcal{A}_{1r}(0,z_2,t)+(\varepsilon z_1)^{N+1}(\mathcal{A}_{1r}^{N+1})_R\,.$ Thus
\begin{equation}\label{boundary operator expansion}
(\mathcal{A}_0\partial_t-\mathcal{L}^\eps)B = \left\{\frac{1}{\eps}\mathcal{L}^\b_{-1} + \mathcal{L}^\b_0 + \eps \mathcal{L}^\b_1 + \eps^2\mathcal{L}^\b_2\right\} B + \text{higher order terms}\,,
\end{equation}
where
\begin{equation}\nonumber
\begin{aligned}
 &\mathcal{L}^\b_{-1} =\mathcal{A}_1(z_2,t)\partial_{z_1}\,,\\
 &\mathcal{L}^\b_0  =  \mathcal{A}_0(z_2,t)\partial_t + \mathcal{A}_2(z_2,t)\partial_{z_2} - (\mathcal{G}+ \mathcal{G}^{11})(z_2,t)\partial^2_{z_1z_1}
                 + \mathcal{W}(z_2,t) + z_1\partial_{x_1}\mathcal{A}_{1r}(z_2,t)\partial_{z_1}\,,\\
 &\mathcal{L}^\b_1 =  2\mathcal{G}^{12}(z_2,t)\partial^2_{z_1z_2}+ (\mathcal{P}_1+ \mathcal{I}_1+ \tfrac{1}{2}z^2_1\partial^2_{x_1}\mathcal{A}_{1r})(z_2,t)\partial_{z_1} + z_1\partial_{x_1}\mathcal{A}_0(z_2,t)\partial_t\,,\\
  &\mathcal{L}^\b_2 =  (\mathcal{G}+ \mathcal{G}^{22})(z_2,t)\partial^2_{z_2z_2}+ (\mathcal{P}_2+ \mathcal{I}_2)(z_2,t)\partial_{z_2}+ \tfrac{1}{6}z^3_1\partial^3_{x_1}\mathcal{A}_{1r}(z_2,t)\partial_{z_1}
   + \tfrac{1}{2}z^2_1\partial^2_{x_1}\mathcal{A}_{1r}(z_2,t)\partial_t\,.
\end{aligned}
\end{equation}
Here we use the notation $\mathcal{K}(z_2,t)=\mathcal{K}(0,z_2,t)$ for a function $\mathcal{K}$. In \eqref{boundary operator expansion}, the precise forms of the  ``higher order term" are lengthy and not important for the later analysis, so we omit writing out the details. In fact, only $\mathcal{L}^\b_{-1}$ and $\mathcal{L}^\b_0$ play important roles in the later boundary layer analysis. Note that
\begin{equation}\label{A2-boundary}
\mathcal{A}_2(z_2,t) = \sqrt{\tfrac{\alpha^2+1}{2}}\begin{pmatrix}
                             0&0&1&1\\
                             0&0&0&0\\
                             1&0&0&0\\
                             1&0&0&0\\
                         \end{pmatrix}
\end{equation}

Thus,
\begin{equation}\label{expansion boundary}
\begin{aligned}
 &(\mathcal{A}_0\partial_t-\mathcal{L}^\eps)B^\eps\\
  =& \frac{1}{\eps}\mathcal{L}^\b_{-1}B^0 + \{\mathcal{L}^\b_{-1}B^1+ \mathcal{L}^\b_0 B^0 \} + \eps \{ \mathcal{L}^\b_{-1}B^2 + \mathcal{L}^\b_0B^1 + \mathcal{L}^\b_1 B^0\}\\
 + & \sum^{N-1}_{i=2} \eps^i \{\mathcal{L}^\b_{-1}B^{i+1} + \mathcal{L}^\b_0B^i + \mathcal{L}^\b_1 B^{i-1} + \mathcal{L}^\b_2 B^{i-2} \}\\
 + & \eps^N \{\mathcal{L}^\b_0B^N + \mathcal{L}^\b_1 B^{N-1} + \mathcal{L}^\b_2 B^{N-2} \} + \eps^{N+1}\{\mathcal{L}^\b_1 B^N + \mathcal{L}^\b_2 B^{N-1}\} + \eps^{N+2} \mathcal{L}^\b_2 B^N\\
 + & \text{higher order terms}\,.
\end{aligned}
\end{equation}

\subsection{Construction of Inner and Boundary Expansions}
We now construct the inner and boundary expansions at each order in details. We plug $W^\eps = E^\eps + B^\eps$ into the equation \eqref{boundary variable U equa} and compare the coefficients of the same order in both inner and boundary terms. It should be noted that the same order of inner and boundary functions will be constructed simultaneously due to their coupling at the boundary.

{\bf \underline{Order $O(\tfrac{1}{\eps})$ of boundary expansion}:}

We start from the first-order term $B^0$ in the boundary expansion by setting the order $O(\frac{1}{\eps})$ in the boundary part zero gives $\mathcal{L}^\b_{-1}B^0=0$, i.e
\begin{equation}\label{B-0-equation-1}
   \mathcal{A}_1(0,z_2,t) \partial_{z_1}B^0(z_1, z_2, t)= 0\,.
\end{equation}
Note that $\mathcal{A}_1(0,z_2,t)= \mathcal{A}_{1m}(0,z_2,t)$ since $\mathcal{A}_{1r}$ vanishes on the boundary $\Gamma \times [0,T]$, i.e. $\mathcal{A}_{1r}(0,z_2,t)=0$. Noting \eqref{A-1m}, the equation \eqref{B-0-equation-1} is equivalent to
\begin{equation}\label{B-0-equation}
 \begin{pmatrix}
    \sqrt{\alpha^2+1}&0\\
    0& -\sqrt{\alpha^2+1}
 \end{pmatrix} \begin{pmatrix}
                   \partial_{z_1}B^0_2\\
                   \partial_{z_1}B^0_3
               \end{pmatrix} =\begin{pmatrix}
                   0\\
                  0
               \end{pmatrix}\quad \mbox{in}\quad\! \Omega \times [0,T]\,.
\end{equation}
Note that we impose the decay condition at infinity that
\begin{equation}\label{B-infity}
 B^0_j(z_1, z_2, t) \rightarrow 0\quad \mbox{as}\quad\! z_1\rightarrow \infty\,,\quad\! (z_2,t)\in \mathbb{R}\times [0,T]\,.
\end{equation}
The only solution of \eqref{B-0-equation} and \eqref{B-infity} is given as
\begin{equation}\label{B0-II-solution}
 B^0_2(z_1,z_2,t)= B^0_3(z_1, z_2, t) \equiv 0 \quad\mbox{for}\quad\! z_1\geq 0\,,\quad\!(z_2,t)\in \mathbb{R}\times [0,T]\,.
\end{equation}

{\bf \underline{ Order $O(1)$ of inner expansion}:}

We determine the leading order term $E^0$ in the inner expansion by setting the $O(1)\mbox{-}$order term in \eqref{expansion inner} to zero and then equipping the resulting equations with the same initial and boundary conditions as in
\eqref{boundary variable BC} and \eqref{boundary variable BC-Euler}. So we deduce the following initial boundary value problem for $E^0$:
\begin{equation}\label{equation E0}
\begin{aligned}
\mathcal{A}_0 \pa_t E^0 - \mathcal{L} E^0=0\,,\quad &\text{in}\quad\!\Omega\times[0,T]\,,\\
\mathcal{M}^0 (E^0 + B^0) =0\,, \quad &\text{on}\quad\!\Gamma\times[0,T]\,,\\
E^0(x,0)=U_0(x)\,,\quad &\text{for}\quad x\in\Omega\,.
\end{aligned}
\end{equation}
Note that from the definition of $\mathcal{M}^0$, see \eqref{boundary variable BC-Euler-1}, only the third and fourth components of $B^0$ and $E^0$ are involved, and $B^0_2$ and $B^0_3$ are solved in \eqref{B0-II-solution}. Thus the boundary condition for $E^0$ in \eqref{equation E0} is $\mathcal{M}^0 E^0=0$, more specifically, $E^0_2-E^0_3 =0$.

It is easy to see that $Q^{-1}E^0$ is a solution of the initial boundary value problem of the linearized Euler equations with the same boundary and initial conditions as in \eqref{linearized Euler}. Then it follows by Proposition \ref{existence linarized Euler} that there exists a unique $E^0$ of the problem \eqref{equation E0}, such that
\begin{equation}\label{21}
E^0\in \bigcap_{j=0}^m C^j([0,T];H^{m-j}(\Omega))\,.
\end{equation}

{\bf \underline{Order $O(1)$ of boundary expansion}:}

By setting the term of $O(1)$ in \eqref{expansion boundary} equal to zero, we have
\begin{equation*}
\mathcal{L}^\b_0 B^0 + \mathcal{L}^\b_{-1}B^1 = 0 \,,
\end{equation*}
which gives
\begin{equation}\label{B0-B1}
\begin{aligned}
&\begin{pmatrix}
 \tfrac{\rho'}{p'_{\rho}}&0&0&0\\
 0&\eta_0&\eta_1&\eta_1\\
 0&\eta_1&\eta_2&\eta_3\\
 0&\eta_1&\eta_3&\eta_2
 \end{pmatrix}\partial_t  \begin{pmatrix}
                                                B^0_0\\
                                                B^0_1\\
                                               0\\
                                                0
                                            \end{pmatrix} + \sqrt{\tfrac{\alpha^2+1}{2}}\begin{pmatrix}
                             0&0&1&1\\
                             0&0&0&0\\
                             1&0&0&0\\
                             1&0&0&0\\
                         \end{pmatrix} \partial_{z_2}\begin{pmatrix}
                                                B^0_0\\
                                                B^0_1\\
                                               0\\
                                                0
                                            \end{pmatrix}\\
-&\begin{pmatrix}
 \tfrac{1}{p'_{\rho}}&0&0&0\\
 0&\tau_0&\tau_1&\tau_1\\
 0&\tau_1&\tau_2&\tau_3\\
 0&\tau_1&\tau_3&\tau_2
 \end{pmatrix}\partial^2_{z_1z_1}\begin{pmatrix}
                                                B^0_0\\
                                                B^0_1\\
                                               0\\
                                                0
                                            \end{pmatrix} + \left\{\mathcal{W}(z_2,t)+ z_1 \partial_{x_1}\mathcal{A}_{1r}(z_2,t)\partial_{z_1}\right\}\begin{pmatrix}
                                                B^0_0\\
                                                B^0_1\\
                                               0\\
                                                0
                                            \end{pmatrix}\\
 = & -\begin{pmatrix}
 0&0&0&0\\
 0&0&0&0\\
 0&0&\sqrt{\alpha^2+1}&0\\
0&0&0&-\sqrt{\alpha^2+1}
 \end{pmatrix} \partial_{z_1} \begin{pmatrix}
                                                B^1_0\\
                                                B^1_1\\
                                                B^1_2\\
                                                B^1_3
                                            \end{pmatrix}\,.
\end{aligned}
\end{equation}

Noticing that the second term in the first line of \eqref{B0-B1} vanishes, i.e. there are no $\partial_{z_2}$ terms in \eqref{B0-B1},  the first two components of \eqref{B0-B1} give the equations of $B^0_0$ and $B^0_1$ which are Prandtl-type linearly coupled equations:
\begin{equation}\label{B0-B1-Prandtl equation-1}
\begin{aligned}
   \tfrac{\rho'}{p'_\rho}(z_2,t)\partial_t B^0_0 - \tfrac{1}{p'_\rho}(z_2,t)\partial^2_{z_1z_1}B^0_0 &+  \left(a^r_{11}z_1\partial_{z_1}B^0_0 + a^r_{12}z_1\partial_{z_1}B^0_1 \right)\\
   &+\left( w_{11}B^0_0 + w_{12}B^0_1\right)=0\,,\\
\end{aligned}
\end{equation}
and
\begin{equation}\label{B0-B1-Prandtl equation-2}
\begin{aligned}
     \eta_0(z_2,t)\partial_t B^0_1 - \tau_0(z_2,t)\partial^2_{z_1z_1}B^0_1 & +  \left(a^r_{21}z_1\partial_{z_1}B^0_0 + a^r_{22}z_1\partial_{z_1}B^0_1 \right)\\
     & +\left( w_{21}B^0_0+ w_{22}B^0_1\right)=0\,,
\end{aligned}
\end{equation}
with the boundary conditions
\begin{equation}\label{B0-B1-BC}
B^0_0=-E^0_0,\;\;\;B^0_1=-E^0_1+ \sqrt{2}\alpha E^0_2\quad \text{on}\quad\!\Gamma\times[0,T],
\end{equation}
and the initial conditions
\begin{equation}\label{B0-B1-IC}
B^0_0(z,0)=B^0_1(z,0)=0\quad \text{for}\quad\! z\in\Omega\,.
\end{equation}
We denote \eqref{B0-B1-Prandtl equation-1} and \eqref{B0-B1-Prandtl equation-2} as
\begin{equation}\label{B0-B1 equation}
\mathcal{E}(B^0_0,B^0_1)=0.
\end{equation}
To solve \eqref{B0-B1 equation} we need to verify the compatibility condition. So we set
\begin{equation}\nonumber
\begin{split}
\tilde{B}^0_0&=B^0_0(z_1,z_2,t)+E^0_0(0,z_2,t)e^{-z_1^2}= B^0_0+\tilde{E}^0_0,\\
\tilde{B}^0_1&=B^0_1(z_1,z_2,t)+(E^0_1(0,z_2,t)- \sqrt{2}\alpha E^0_2(0,z_2,t))e^{-z_1^2}\\& = B^0_1+\tilde{E}^0_1\,.
\end{split}
\end{equation}
Then $(\tilde{B}^0_0,\tilde{B}^0_1)$ satisfies
\begin{equation}\nonumber
\mathcal{E}(\tilde{B}^0_0,\tilde{B}^0_1)=\mathcal{E}(\tilde{E}^0_0,\tilde{E}^0_1).
\end{equation}

As $V_0$ satisfies the compatibility condition of order $[\frac{m}{2}]-1,$ for the problem \eqref{linearized NSF}
for any $\varepsilon>0,$ one derive that
\begin{equation}\nonumber
\pa^k_t\tilde{E}^0_i(z,0)=0\,,\quad k=0,1,\cdots ,[\tfrac{m}{2}]-1,
\end{equation}
and one also could check that
\begin{equation}\nonumber
\langle z_1\rangle^l\pa^k_t\pa^{\alpha_1}_{z_1}\pa^{\alpha_2}_{z_2}[\mathcal{E}(\tilde{E}^0_0,\tilde{E})]\in
C^0([0,T];L^2(\Omega))\quad \text{for}\quad\!
k+|\alpha|\leq m-3,
\end{equation}
Then by Proposition \ref{Prandtl Existence} which will be presented in the last section, we obtain a unique solution $(B^0_1, B^0_1)$ to \eqref{B0-B1-Prandtl equation-1}-\eqref{B0-B1-Prandtl equation-2}-\eqref{B0-B1-BC}-\eqref{B0-B1-IC} such that
\begin{equation*}
\langle z_1\rangle^l\pa^k_t\pa^{\alpha_1}_{z_1}\pa^{\alpha_2}_{z_2}\tilde{B}^0_i\in
C^0([0,T];L^2(\Omega))\,,\quad i=0,1\,,
\end{equation*}
for
$k+|\alpha|\leq m-4,\;k+[\frac{\alpha_1+1}{2}]\leq [\frac{m}{2}]-2,$ and
\begin{equation*}
\pa^k_t \tilde{B}^0_i(z,0)=0\,,\quad \text{for}\quad\!k=0\,,1\,,\cdots\,,[\tfrac{m}{2}]-2\,,\quad\! i=0,1.
\end{equation*}
Thus we have
\begin{equation}\label{B0-I decay}
\langle z_1\rangle^l\pa^k_t\pa^{\alpha_1}_{z_1}\pa^{\alpha_2}_{z_2}B^0_i\in
C^0([0,T];L^2(\Omega))\quad i=0,1,
\end{equation}
for
$k+|\alpha|\leq m-4,\;k+[\frac{\alpha_1+1}{2}]\leq [\frac{m}{2}]-2,$ and
\begin{equation}\label{compatibility B1-I}
\pa^k_t
B^0_i(z,0)=0\,, \quad \text{for}\quad\! k=0\,,1\,,\cdots\,,[\tfrac{m}{2}]-2\,,\quad\! i=0,1.
\end{equation}

The third and the fourth equations of \eqref{B0-B1} could be written as the following ODEs for $z_1$ ($z_2$ and $t$ are parameters):
\begin{equation}\label{38}
\begin{pmatrix}
\sqrt{\alpha^2+1}&0\\
0&-\sqrt{\alpha^2+1}
\end{pmatrix}
\pa_{z_1}\begin{pmatrix}
B^1_2\\
B^1_3
\end{pmatrix}(z_1,z_2,t)=
\begin{pmatrix}
H^1_2(B^0)\\
H^1_3(B^0)
\end{pmatrix}(z_1,z_2,t)\,,
\end{equation}
where $H^1_j\;(j=2,3)$ are linear functions of the known functions $B^0_0$ and $B^0_1$. By the condition \eqref{B-infity}, we have
\begin{equation}\label{B1-II-solution}
B^1_j(z_1,z_2,t)=\int^{\infty}_{z_1}(-1)^{j}\sqrt{\alpha^2+1}H^1_j(B^0)(\xi,z_2,t)\,\mathrm{d}\xi\,, \quad j=2,3.
\end{equation}
It follows from \eqref{B0-I decay} that
\begin{equation}\label{B0-II decay}
\langle z_1\rangle^l\pa^k_t\pa^{\alpha_1}_{z_1}\pa^{\alpha_2}_{z_2}B^1_i\in
C^0([0,T];L^2(\Omega))\quad i=2,3,
\end{equation}
for
$k+|\alpha|\leq m-5,\;k+[\frac{\alpha_1+1}{2}]\leq [\frac{m}{2}]-2,$ and
\begin{equation}\label{compatibility B1-II}
\pa^k_t
B^1_i(z,0)=0\,, \quad \text{for}\quad\! k=0\,,1\,, \cdots\,,[\tfrac{m}{2}]-3\,, \quad i=2,3.
\end{equation}

{\bf \underline{ Order $O(\eps)$ of inner expansion}:}

By setting the order $O(\eps)$ in the inner expansion \eqref{expansion inner}, we are led to the follow initial boundary value problem of $E^1$:
\begin{equation}\label{equation E1}
\begin{split}
(\mathcal{A}_0\partial_t-\mathcal{L}^0)E^1=0\,,\quad &\text{in}\quad\!\Omega\times[0,T],\\
\mathcal{M}^0 (E^1+ B^1)= 0\,, \quad & \text{on}\quad\!\Gamma\times[0,T]\\
E^1(z,0)=0\,,\quad &\text{for}\quad \! z\in\Omega\,.
\end{split}
\end{equation}
Again, note that $\mathcal{M}^0 B^1$ does not contain $B^1_0$ and $B^1_1$, only contains $B^1_2$ and $B^1_3$ which are solved in the last step, see \eqref{B1-II-solution}. So the boundary condition $\mathcal{M}^0 E^1 = -\mathcal{M}^0 B^1$ is known, i.e. $E^1_2-E^1_3=-B^1_2+B^1_3$ on $\Gamma \times [0,T]$. To obtain the existence of the unique solution $E^1$ to \eqref{equation E1}, we need verify the compatibility conditions.  We set
\begin{equation}\nonumber
\tilde{E}^1=E^1-(0,0,B^1_2(0,z_2,t),B^1_3(0,z_2,t))^{\top}e^{-z_1^2}=  E^1-\bar{B}^1\,.
\end{equation}
It follows from \eqref{equation E1} and \eqref{compatibility B1-II}, $\tilde{E}^1$ satisfies the equation
\begin{equation}\label{equation E1 tilde}
\begin{split}
(\mathcal{A}_0\partial_t-\mathcal{L}^0)\tilde{E}^1&=- (\mathcal{A}_0\partial_t-\mathcal{L}^0)\bar{B}^1\,, \quad \text{in}\quad\! \Omega\times[0,T],\\
\tilde{E}^1_2-\tilde{E}^1_3&=0\,, \quad \text{on}\quad\!\Gamma\times[0,T]\\
\tilde{E}^1(x,0)&=0\,, \quad \text{for}\quad\! x\in\Omega.
\end{split}
\end{equation}
It follows from \eqref{B0-II decay} that
$$(\mathcal{A}_0\partial_t-\mathcal{L}^0)\bar{B}^1 \in H^{[\frac{m}{2}]-4}(\Omega\times[0,T]).$$
and $\pa^k_t\mathcal{F}(\bar{B}^1)(x,0)=0\,, k=0\,,1\,,\cdots\,, [\tfrac{m}{2}]-4$, then by
Proposition \ref{existence linarized Euler}, there exists a unique solution to \eqref{equation E1 tilde}
\begin{equation}\nonumber
\tilde{E}^1\in \bigcap_{j=0}^{[\frac{m}{2}]-4}C^j([0,T];H^{[\frac{m}{2}]-4-j}),
\end{equation}
this gives a unique solution $E^1$ to \eqref{equation E1} such that
\begin{equation}\nonumber
E^1\in \bigcap_{j=0}^{[\frac{m}{2}]-4}C^j([0,T];H^{[\frac{m}{2}]-4-j}).
\end{equation}

{\bf \underline{Order $O(\eps)$ of boundary expansion}:}

Similar as before, we next construct $(B^1_0,B^1_1)$ and $(B^2_2,B^2_3)$ by setting the $O(\varepsilon)$-order term
in \eqref{expansion boundary} equal to zero which gives
\begin{equation}\label{B1-B2 equation}
\begin{aligned}
\mathcal{L}^\b_{-1} B^2 + \mathcal{L}^\b_{0}B^1 &= - \mathcal{L}^\b_1 B^0\,,\quad \text{in}\quad\! \Omega\times [0,T]\\
\mathcal{M}^+ (B^1+ E^1)&= 0\,,\quad \text{on}\quad\! \Gamma\times [0,T]\\
B^1(z,0) & =0\,,\quad\text{for}\quad\! z\in \Omega\,.
\end{aligned}
\end{equation}
The first two components of \eqref{B1-B2 equation} are a linear system of Prandtl-type equations
\begin{equation}\label{B1-B2-Prandtl equation}
\mathcal{E}(B^1_0,B^1_1)= F_1(B^0,(B^1)_{\mathrm{II}})\,,
\end{equation}
where the inhomogeneous term $F_1(B^0,(B^1)_{\mathrm{II}})$ is a two components vector-valued function:
\begin{equation}\label{F1}
\begin{aligned}
  F_1(B^0,(B^1)_{\mathrm{II}})= &-\begin{pmatrix}
                                    0\\
                                    \eta_1 \partial_t(B^1_2 + B^1_3)
                                 \end{pmatrix}
                                 - \sqrt{\tfrac{\alpha^2+1}{2}}\begin{pmatrix}
                                         \partial_{z_2}(B^1_2 + B^1_3)\\
                                         0
                                   \end{pmatrix}
                                   + \begin{pmatrix}
                                    0\\
                                    \tau_1 \partial^2_{z_1z_1}(B^1_2 + B^1_3)
                                 \end{pmatrix}\\
  & + (\mathcal{W} + z_1 \partial_{x_1}\mathcal{A}_{1r})\partial_{z_1}(0,0,B^1_2,B^1_3)^\top - (\mathcal{L}^\b_1 B^0)_{\mathrm{I}}\,.
\end{aligned}
\end{equation}
Here we use the notation that for a four components vector $U$, $U_{\mathrm{I}}$ denotes the first two components, and $U_{\mathrm{II}}$ denotes the last two components. Note that $E^1$ is already solved in the last step, so the boundary and initial condition of \eqref{B1-B2-Prandtl equation} are
\begin{equation}\label{B1-B2 Prandtl BC-IC}
\begin{split}
B^1_0=-E^1_0,\;\;B^1_1=-E^1_1+ \sqrt{2}\alpha E^1_2\,,\quad \text{on}\;\Gamma\times [0,T],\\
(B^1_0,B^1_1)^{\top}(z,0)=(0,0)^{\top}\,,\quad z\in\Omega\,.
\end{split}
\end{equation}
Similar as in solving $(B^0_0,B^1_0)$,  set
\begin{equation}\label{50}
\begin{split}
\tilde{B}^1_0&=B^1_0(z_1,z_2,t)+E^1_0(0,z_2,t)e^{-z_1^2} = B^1_0+\tilde{E}^1_0,\\
\tilde{B}^1_1&=B^1_1(z_1,z_2,t)+ (E^1_1(0,z_2,t)- \sqrt{2}\alpha E^1_2(0,z_2,t)) e^{-z_1^2}\\& =  B^1_1+\tilde{E}^1_1,
\end{split}
\end{equation}
Then Proposition \ref{Prandtl Existence} shows that there exists a unique solution $(B^1_0, B^1_1)$ to \eqref{B1-B2-Prandtl equation}-\eqref{B1-B2 Prandtl BC-IC} such that
\begin{equation}\nonumber
\langle z_1\rangle^l\pa^k_t\pa^{\alpha_1}_{z_1}\pa^{\alpha_2}_{z_2}B^1_i\in
C^0([0,T];L^2(\Omega))\quad i=0,1,\quad\! k+|\alpha|\leq[\tfrac{m}{2}]-8,
\end{equation}
and
\begin{equation}\nonumber
\pa^k_tB^1_i(z,0)=0\quad i=0,1,\quad k=0,1,\cdots ,[\tfrac{m}{2}]-8.
\end{equation}

The third and fourth components of \eqref{B1-B2 equation} are ODEs for $(B^2_2, B^2_3)$:
\begin{equation}\label{B2-ODE}
\begin{pmatrix}
\sqrt{\alpha^2+1}&0\\
0&-\sqrt{\alpha^2+1}
\end{pmatrix}
\pa_{z_1}\begin{pmatrix}
B^2_2\\
B^2_3
\end{pmatrix}(z_1,z_2,t)=
\begin{pmatrix}
H^2_2(B^0, B^1)\\
H^2_3(B^0, B^1)
\end{pmatrix}(z_1,z_2,t)\,,
\end{equation}
where $H^2_j\;(j=2,3)$ are linear functions of the known functions $B^0$ and $B^1$. The equation \eqref{B2-ODE} is regarded as an ordinary equation for $B^2_j$ for $j=2,3$ with independent variable $z_1$ and parameters $(z_2,t)\in \mathbb{R}\times [0,T]$. The condition at infinity are imposed as
\begin{equation}\label{B2-infity}
 B^2_j(z_1, z_2, t) \rightarrow 0\quad \mbox{as}\quad\! z_1\rightarrow \infty\,,\quad\! (z_2,t)\in \mathbb{R}\times [0,T]\,.
\end{equation}
Thus, the solution to \eqref{B2-ODE}-\eqref{B2-infity} are uniquely given by
\begin{equation}\label{B2-II-solution}
B^2_j(z_1,z_2,t)=\int^{\infty}_{z_1}(-1)^{j}\sqrt{\alpha^2+1}H^2_j(B^0,B^1)(\xi,z_2,t)\,\mathrm{d}\xi\,, \quad j=2,3
\end{equation}
and they have the following properties
\begin{equation}\label{54}
\langle z_1\rangle^l\pa^k_t\pa^{\alpha_1}_{z_1}\pa^{\alpha_2}_{z_2}B^2_j\in
C^0([0,T];L^2(\Omega))\quad j=2,3,\quad k+|\alpha|\leq[\tfrac{m}{2}]-9.
\end{equation}

{\bf \underline{General cases}: }

For general $i \geq 2$, to solve $E^i$ and $B^i$, it includes 3 steps: The {\bf \underline{first step}} is to solve $(B^i_2, B^i_3)$ by ODEs which comes from the third and fourth components of equation $\mathcal{L}^\b_{-1}B^{i} + \mathcal{L}^\b_0B^{i-1} + \mathcal{L}^\b_1 B^{i-2} + \mathcal{L}^\b_2 B^{i-3}=0$:
\begin{equation}\label{Bi-ODE}
\begin{pmatrix}
\sqrt{\alpha^2+1}&0\\
0&-\sqrt{\alpha^2+1}
\end{pmatrix}
\pa_{z_1}\begin{pmatrix}
B^i_2\\
B^i_3
\end{pmatrix}(z_1,z_2,t)=
\begin{pmatrix}
H^i_2(B^0, B^1, \cdots, B^{i-1})\\
H^i_3(B^0, B^1, \cdots, B^{i-1})
\end{pmatrix}(z_1,z_2,t)\,,
\end{equation}
with the condition at infinity
\begin{equation}\label{Bi-infity}
 B^i_j(z_1, z_2, t) \rightarrow 0\quad \mbox{as}\quad\! z_1\rightarrow \infty\,,\quad\! (z_2,t)\in \mathbb{R}\times [0,T]\,.
\end{equation}
Thus
\begin{equation}\label{Bi-II-solution}
B^i_j=\int^{\infty}_{z_1}(-1)^{j}\sqrt{\alpha^2+1}H^2_j(B^0, B^1, \cdots, B^{i-1})(\xi,z_2,t)d\xi\,,\quad j=2,3
\end{equation}
and they have the following properties
\begin{equation}\label{54}
\langle z_1\rangle^l\pa^k_t\pa^{\alpha_1}_{z_1}\pa^{\alpha_2}_{z_2}B^i_j\in
C^0([0,T];L^2(\Omega))\quad j=2,3,\quad\! k+|\alpha|\leq[\tfrac{m}{2}]+5-7i\,.
\end{equation}

The {\bf \underline{second step}} is to solve $E^i$ by setting the order $O(\eps^i)$ in the inner expansion \eqref{expansion inner} to zero to derive the equation of $E^i$ which is a linearized Euler equation with inhomogeneous term:
\begin{equation}\label{equation Ei}
\begin{aligned}
 (\mathcal{A}_0\partial_t-\mathcal{L}^0) E^i = \Lambda E^{i-2}\,, \quad &\text{in}\quad\! \Omega \times [0,T]\,,\\
 \mathcal{M}^0 E^i =  \mathcal{M}^0 B^i \,, \quad &\text{on}\quad\! \Gamma \times [0,T]\,,\\
E^1(z,0)=0\,,\quad &\text{for}\quad \! z\in\Omega\,.
\end{aligned}
\end{equation}
Since $\mathcal{M}^0 B^i$ does not contain $B^i_0$ and $B^i_1$, only contains $B^i_2$ and $B^i_3$ which are solved in the last step, see \eqref{Bi-II-solution}. So the boundary condition $\mathcal{M}^0 E^i = -\mathcal{M}^0 B^i$ is known, i.e. $E^i_2-E^i_3=-B^i_2+B^i_3$ on $\Gamma \times [0,T]$. Similar as before, after verifying the compatibility conditions, we obtain a unique solution $E^i$ to the initial boundary problem \eqref{equation Ei} by Proposition \ref{existence linarized Euler}:
\begin{equation}\nonumber
E^i\in\bigcap_{j=0}^{[\frac{m}{2}]+3-7i}C^j([0,T];H^{[\frac{m}{2}]+3-7i-j}(\Omega))\,.
\end{equation}

The {\bf \underline{third step}} is to solve $(B^i_0, B^i_1)$ by setting the $O(\varepsilon^i)$-order term
in \eqref{expansion boundary} equal to zero which gives

\begin{equation}\label{Bi-Bi+1 equation}
\begin{aligned}
\mathcal{L}^\b_{-1} B^{i+1} + \mathcal{L}^\b_{0}B^i &= - (\mathcal{L}^\b_1 B^{i-1}+ \mathcal{L}^\b_1 B^{i-2})\,,\quad \text{in}\quad\! \Omega\times [0,T]\\
\mathcal{M}^+ (B^i+ E^i)&= 0\,,\quad \text{on}\quad\! \Gamma\times [0,T]\\
B^i(z,0) & =0\,,\quad\text{for}\quad\! z\in \Omega\,.
\end{aligned}
\end{equation}
The first two components of \eqref{Bi-Bi+1 equation} are a linear system of Prandtl-type equations
\begin{equation}\label{Bi-Bi+1-Prandtl equation}
\mathcal{E}(B^i_0,B^i_1)= F_i(B^0, B^1, \cdots, B^{i-1},(B^i)_{\mathrm{II}})\,,
\end{equation}
where the inhomogeneous term $F_i(B^0, B^1, \cdots, B^{i-1},(B^i)_{\mathrm{II}})$ is a two components vector-valued function:
\begin{equation}\label{Fi}
\begin{aligned}
&F_i(B^0, B^1, \cdots, B^{i-1},(B^i)_{\mathrm{II}})\\
= &-\begin{pmatrix}
                                    0\\
                                    \eta_1 \partial_t(B^i_2 + B^i_3)
                                 \end{pmatrix}
                                 - \sqrt{\tfrac{\alpha^2+1}{2}}\begin{pmatrix}
                                         \partial_{z_2}(B^i_2 + B^i_3)\\
                                         0
                                   \end{pmatrix}
                                   + \begin{pmatrix}
                                    0\\
                                    \tau_1 \partial^2_{z_1z_1}(B^i_2 + B^i_3)
                                 \end{pmatrix}\\
  & + (\mathcal{W} + z_1 \partial_{x_1}\mathcal{A}_{1r})\partial_{z_1}(0,0,B^i_2,B^i_3)^\top - (\mathcal{L}^\b_1 B^{i-1}+ \mathcal{L}^\b_2 B^{i-2})_{\mathrm{I}}\,.
\end{aligned}
\end{equation}
Note that $E^1$ is already solved in the last step, so the boundary and initial condition of \eqref{B1-B2-Prandtl equation} are
\begin{equation}\label{Bi-Bi+1 Prandtl BC-IC}
\begin{split}
B^i_0=-E^1_0\,,\quad B^i_1=-E^i_1+ \sqrt{2}\alpha E^i_2\,,\quad \text{on}\;\Gamma\times [0,T],\\
(B^i_0,B^i_1)^{\top}(z,0)=(0,0)^{\top}\,,\quad z\in\Omega\,.
\end{split}
\end{equation}
Then Proposition \ref{Prandtl Existence} shows that there exists a unique solution $(B^i_0, B^i_1)$ to \eqref{Bi-Bi+1-Prandtl equation}-\eqref{Bi-Bi+1 Prandtl BC-IC} such that
\begin{equation}\nonumber
 \langle z_1\rangle^l\pa^k_t\pa^{\alpha_1}_{z_1}\pa^{\alpha_2}_{z_2}B^i_j\in
C^0([0,T];L^2(\Omega))\quad \text{for}\quad\! k+|\alpha|\leq[\tfrac{m}{2}]-1-7i\,.
\end{equation}

The same as before, the third and fourth components of \eqref{Bi-Bi+1 equation} are ODEs for $(B^{i+1}_2, B^{i+1}_3)$. Then we can continue the process and solve all $B^j$ and $E^j$ for $j=0, 1, \cdots, N$ for any $N\in\mathbb{N}$.

\subsection{Error Terms}
We can conclude that the approximate solution $W^\eps(x,t)$ for $\eps> 0$ in \eqref{expansion} has at least the smoothness such that
\begin{equation}\label{smoothness W}
W^{\varepsilon}(x,t)\in \bigcap^{[m/2]-1-7N}_{j=0}C^j([0,T];H^{[m/2]-1-7N-j}(\Omega)).
\end{equation}
and $W^{\varepsilon}$ satisfies the equation
\begin{equation}\nonumber
(\mathcal{A}_0 - \mathcal{L}^\eps)W^\eps(x_1,x_2,t)=\varepsilon^N g^\eps_B(\tfrac{x_1}{\varepsilon},x_2,t) + \varepsilon
^{N+1}g^\eps_E(x,t)\,,
\end{equation}
for $(x,t)\in\Omega\times [0,T],$ with the boundary and initial conditions
\begin{equation}\nonumber
\begin{aligned}
W^{\varepsilon}_2-W^{\varepsilon}_3=0\,,\quad W^{\varepsilon}_0=0,
\quad -W^{\varepsilon}_1+ \sqrt{2}\alpha W^{\varepsilon}_2=0\,,\quad &\text{on}\quad\!\Gamma\times[0,T),\\
W^{\varepsilon}(x,0)=U_0(x)\,,\quad &\text{for}\quad\! x\in\Omega.
\end{aligned}
\end{equation}
where the precise expressions of $g^\eps_E(x,t)$ and $g^\eps_B(\frac{x_1}{\eps},x_2,t)$ are lengthy and not important. The smoothness and the compatibility conditions satisfied by $g^\eps_E$ and $g^\eps_B$ are
\begin{equation}\label{gE smooth}
g^\eps_E(x,t)\in \bigcap^{[m/2]+1-7N}_{j=0}C^j([0,T];H^{[m/2]+1-7N-j}(\Omega)),
\end{equation}
\begin{equation}\label{gE compatibility}
\partial^k_t g^\eps_E(x,0)=0,\;\;\;\text{for}\;x\in \Omega,\;\;k=0,1,
\end{equation}
and
\begin{equation}\label{gB smooth}
g_B(\varepsilon;\tfrac{x_1}{\varepsilon},x_2,t)\in \bigcap^{[m/2]-2-7N}_{j=0}C^j([0,T];H^{[m/2]-2-7N-j}(\Omega)),
\end{equation}
\begin{equation}\label{gB compatibility}
\pa^k_t g^\eps_B(\tfrac{x_1}{\varepsilon},x_2,0)=0\,,\quad \text{for}\quad\! x\in \Omega\quad\! k=0,1,\cdots ,[\tfrac{m}{2}]-2-7N.
\end{equation}

Moreover, it is easy to see that there exists a constant $C$ which is independent of $\varepsilon$, such that
\begin{equation}\label{gE estimate}
\sup_{t\in[0,T]}\sum_{k+|\alpha|\leq[m/2]+1-7N}\|\pa^k_t\pa^{\alpha_1}_{x_1}
\pa^{\alpha_2}_{x_2}g^\eps_E(x,t)\|_{L^2(\Omega)^2}\leq C,
\end{equation}
and
\begin{equation}\label{gB estimate}
\sup_{t\in[0,T]}\sum_{k+|\alpha|\leq[m/2]-2-7N}\|\langle z_1\rangle^l\pa^k_t
\pa^{\alpha_1}_{z_1}\pa^{\alpha_2}_{z_2}g^\eps_B(z,t)\|_{L^2(\Omega)^2}\leq C.
\end{equation}

\section{Estimates of the Error Term of the Approximate Solution}

In this section we estimate the error term of the approximate solution. Let $V^\eps$ be the solution of the linearized Navier-Stokes-Fourier equations \eqref{linearized NSF}, and $W^{\varepsilon}$ be the approximate solution we constructed in the previous sections.

Let $$w^{\varepsilon}=V^\eps -Q^{-1}(x,t)W^{\varepsilon}\doteq(w^{\varepsilon}_0,w^{\varepsilon}_1,w^{\varepsilon}_2,w^{\varepsilon}_3)^{\top}.$$
By Proposition \ref{lcns} and the \eqref{smoothness W}, we have that
\begin{equation}\label{65}
w^{\varepsilon}(x,t)\in \bigcap^{[m/2]-1-7N}_{j=0}C^j([0,T];H^{[m/2]-1-7N-j}(\Omega)).
\end{equation}
We also have that $w^{\varepsilon}$ satisfies the equation
\begin{equation}\label{w equation}
A_0(V')\pa_t w^{\varepsilon}+\sum^2_{j=1}A_j(V')\pa_j w^{\varepsilon}
-L_{\varepsilon}w^{\varepsilon}=\varepsilon^NG(\varepsilon,\tfrac{x_1}{\varepsilon},x,t)\,,\quad (x,t)\in\Omega\times[0,T].
\end{equation}
with the boundary and initial condition
\begin{equation}\label{w BC}
\begin{aligned}
w^\varepsilon_1=w^\varepsilon_2=w^\varepsilon_3&=0\,,\quad \text{on}\quad\!\Gamma\times [0,T]\\
w^\varepsilon(x,0)&=0\,,\quad\text{for}\quad\! x\in\Omega\,.
\end{aligned}
\end{equation}
Here
\begin{equation}\nonumber
\begin{split}
G(\varepsilon,\tfrac{x_1}{\varepsilon},x,t)&=\varepsilon Q^{-1}g_E(\varepsilon,x,t)
+Q^{-1}g_B(\varepsilon,\tfrac{x_1}{\varepsilon},x_2,t)\\
& =(G_0,G_1,G_2,G_3)^{\top}.
\end{split}
\end{equation}

Let $m\geq 2(7N+4)$, By \eqref{gE smooth}-\eqref{gB estimate}, we have
\begin{equation}\nonumber
G(\varepsilon;\tfrac{x_1}{\varepsilon},x,t)\in \bigcap^2_{j=0}C^j([0,T];H^{2-j}(\Omega)),
\end{equation}
and
\begin{equation}\nonumber
\pa^k_tG(\varepsilon;\tfrac{x_1}{\varepsilon},x,0)=0\,,\quad\! k=0,1,\quad\!\text{for}\quad\!x\in\Omega.
\end{equation}

Then Theorem 1.1 is a conclusion of  the following proposition:
\begin{proposition}\label{proposition1}
Assume that  $m\geq 2(7N+4)$, $w^{\varepsilon}$ satisfies \eqref{w equation}-\eqref{w BC}, then we have
\begin{equation}\nonumber
w^{\varepsilon}\in \bigcap^3_{j=0}C^j([0,T];H^{3-j}(\Omega)),
\end{equation}
and the following estimates hold
\begin{equation}\nonumber
\sup_{(x,t)\in\Omega\times[0,T]}|w^{\varepsilon}_0|\leq C\varepsilon^{N-1},
\end{equation}
and
\begin{equation}\nonumber
\sup_{(x,t)\in\Omega\times[0,T]}|w^{\varepsilon}_j|\leq C\varepsilon^{N-\frac{3}{4}},\;\;\;j=1,2,3.
\end{equation}
\end{proposition}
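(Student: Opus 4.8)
The plan is to treat \eqref{w equation}--\eqref{w BC} as a linear symmetric hyperbolic system perturbed by the $O(\eps^2)$ viscous--thermal operator $L_\eps$, to close $\eps$-weighted energy estimates up to third order, and to convert these into the pointwise bounds by Sobolev embedding in two dimensions. First I would record the regularity \eqref{65}, which is inherited from the solvability of the linearized Navier--Stokes--Fourier system (Proposition \ref{lcns}) together with the smoothness \eqref{smoothness W} of the approximate solution $W^\eps$; under the hypothesis $m\ge 2(7N+4)$ this already yields $w^\eps\in\bigcap_{j=0}^3 C^j([0,T];H^{3-j}(\Om))$, so all the manipulations below are justified.

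The core is the basic energy identity obtained by pairing \eqref{w equation} with $w^\eps$ in $L^2(\Om)$. Since $A_0(V')$ is symmetric positive definite and each $A_j(V')$ is symmetric, the hyperbolic part produces $\tfrac12\tfrac{d}{dt}(A_0 w^\eps,w^\eps)$ together with a boundary flux of $A_1$ on $\Ga$ and lower-order commutator terms that are absorbed by Gr\"onwall. Integrating by parts the dissipative operator $L_\eps$ yields a nonnegative dissipation of order $\eps^2\|\grad w^\eps\|^2$ in the velocity and temperature components; crucially, the boundary contributions of this integration by parts vanish because $w_1^\eps=w_2^\eps=w_3^\eps=0$ on $\Ga$, and the characteristic structure of $A_1$ on $\Ga$ (the density being the non-characteristic variable) lets me dominate the surviving boundary flux. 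The only forcing is $\eps^N G$, and here the boundary-layer scaling is decisive: writing $G=\eps Q^{-1}g_E+Q^{-1}g_B(x_1/\eps,\cdot)$ and changing variables $z_1=x_1/\eps$ converts the weighted bounds \eqref{gE estimate}--\eqref{gB estimate} into $\|\eps^N G\|_{L^2(\Om)}\lesssim \eps^{N+1/2}$, the half-power being the $L^2$ gain of the boundary-layer profile. Since $w^\eps(\cdot,0)=0$, Gr\"onwall then gives $\|w^\eps\|_{L^2}\lesssim \eps^{N+1/2}$ together with $\eps\,\|\grad w^\eps\|_{L^2([0,T];L^2)}\lesssim\eps^{N+1/2}$.

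Next I would differentiate the equation in the tangential variables $(\pa_t,\pa_{x_2})$ and then in the normal variable $\pa_{x_1}$, running the same scheme on $\pa_t^k\pa^\alpha w^\eps$ for $k+|\alpha|\le 3$. Tangential derivatives commute harmlessly with the boundary structure and preserve the $\eps^{1/2}$ gain of the source, whereas each normal derivative falling on $g_B(x_1/\eps,\cdot)$ costs a factor $\eps^{-1}$; the $\eps^2$-dissipation must then be spent to recover normal derivatives of the velocity and temperature block. Carefully tracking these competing factors produces $L^2$ bounds for $\pa_t^k\pa^\alpha w^\eps$ whose power of $\eps$ degrades with the number of normal derivatives, and in particular an $H^2$-type bound that is better by a factor $\eps^{1/2}$ for the dissipated components $w_j^\eps$ ($j=1,2,3$) than for the density $w_0^\eps$. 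The pointwise estimates then follow from the two-dimensional Agmon inequality $\|f\|_{L^\infty(\Om)}\lesssim\|f\|_{L^2(\Om)}^{1/2}\|f\|_{H^2(\Om)}^{1/2}$: both blocks share the sharp $L^2$ rate $\eps^{N+1/2}$, but only the hyperbolic $H^2$ bound is available for the non-dissipated $w_0^\eps$, while the $\eps^2$-dissipation and the vanishing of $w_j^\eps$ on $\Ga$ improve the $H^2$ bound of the remaining components by $\eps^{1/2}$; interpolation halves this gap, giving $\sup|w_0^\eps|\lesssim\eps^{N-1}$ and $\sup|w_j^\eps|\lesssim\eps^{N-3/4}$.

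The main obstacle is precisely this $\eps$-bookkeeping in the differentiated estimates: one must follow each normal derivative through the boundary layer, where it costs $\eps^{-1}$ on the source but is partly repaid by the $\eps^2$-dissipation, while simultaneously controlling the boundary flux of the hyperbolic part and the linear coupling at the boundary \eqref{w BC}, where only $w_1^\eps,w_2^\eps,w_3^\eps$ are prescribed to vanish. Disentangling the density from the velocity--temperature block so that the weaker, purely hyperbolic rate is confined to $w_0^\eps$ and the stronger, dissipation-assisted rate is obtained for the rest is the delicate point that forces the two distinct exponents $\eps^{N-1}$ and $\eps^{N-3/4}$.
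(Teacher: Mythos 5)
Your overall strategy --- $L^2$ energy estimate exploiting the symmetric hyperbolic structure plus the $O(\eps^2)$ dissipation, the $\eps^{1/2}$ gain from the $L^2$ norm of the boundary-layer source, tangential-then-normal differentiated estimates, and a final interpolation to $L^\infty$ --- is the same skeleton as the paper's (Lemmas 3.1--3.8 followed by an interpolation inequality). However, there are two genuine gaps at precisely the points you yourself flag as delicate, and neither is resolved by the argument as you describe it.

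First, you give no mechanism for controlling normal derivatives of the density $w_0^\eps$, which carries no dissipation. Your plan is to ``spend the $\eps^2$-dissipation to recover normal derivatives of the velocity and temperature block,'' but that dissipation acts only on $w_1,w_2,w_3$; the transport equation for $w_0$ alone gives nothing for $\pa_1 w_0$ or $\pa_{12}w_0$ beyond what the forcing $\pa_1 w_1+\pa_2 w_2$ allows, and a naive estimate loses a full power of $\eps$ per normal derivative in the layer. The paper's resolution (Lemmas 3.4 and 3.6) is to solve the momentum equation \eqref{w1 equation} for $\eps^2\pa_{11}w_1$ as in \eqref{c1} and substitute it into the differentiated continuity equation; the term $p'_\rho\pa_1 w_0$ appearing there then produces a \emph{damping} term $\tfrac{1}{\eps^2}\int_0^t\|\pa_1 w_0\|^2\,\dd s$ (respectively $\tfrac{C}{\eps^2}\|\pa_{12}w_0\|^2$ in \eqref{a38}--\eqref{a39}) with a favorable sign, which is what yields $\|\pa_1 w_0\|\lesssim\eps^{N-3/2}$ and $\|\pa_{12}w_0\|\lesssim\eps^{N-5/2}$. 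Without this elliptic substitution the density estimates do not close.

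Second, the isotropic Agmon inequality $\|f\|_{L^\infty}\lesssim\|f\|_{L^2}^{1/2}\|f\|_{H^2}^{1/2}$ requires the full $H^2$ norm, including $\pa_{11}f$ up to the boundary, and this is exactly what is \emph{not} available: the paper controls only $\chi\pa_{11}w$ with a cutoff $\chi$ vanishing at $x_1=0$ (Lemma 3.5), controls $\int_0^t\|\pa_{11}w_j\|^2\dd s$ only time-integrated and only for the dissipated components $j=1,2,3$ (estimates \eqref{a33}--\eqref{a34}), and never estimates $\pa_{11}w_0$ at all. The paper instead uses the anisotropic inequality $\|f\|_{L^{\infty}}\leq 2\|f\|^{1/4}\|\pa_1 f\|^{1/4}\|\pa_2 f\|^{1/4}\|\pa_{12}f\|^{1/4}$, which needs only one normal derivative (pure or mixed with $\pa_2$) and is the reason the exponents $\eps^{N-1}$ and $\eps^{N-3/4}$ come out as products of fourth roots of the separate bounds $\|w\|,\|\pa_1 w\|,\|\pa_2 w\|,\|\pa_{12}w\|$ rather than from an $L^2$--$H^2$ interpolation. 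As written, your final step relies on an $H^2$ bound that the scheme cannot produce.
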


We write $w=(w_0,w_1,w_2,w_3)^{\top}$ instead of $w^\varepsilon$ for simplicity.
We rewrite the equation as
\begin{equation}\label{w0 equation}
\tfrac{1}{\rho'}\pa_tw_0+\tfrac{\mathrm{u}'}{\rho'}\cdot\bigtriangledown w_0 +(\pa_1w_1+\pa_2w_2)=\varepsilon^NG_0,
\end{equation}
\begin{equation}\label{w1 equation}
\begin{split}
\tfrac{\rho'}{p'_{\rho}}\pa_tw_1+\tfrac{\rho'}{p'_{\rho}}\mathrm{u}'\cdot\bigtriangledown w_1+
\pa_1w_0+\tfrac{p'_{\theta}}{p'_{\rho}}\pa_1w_3\\
-\tfrac{1}{p'_{\rho}}\varepsilon^2(\triangle w_1+C\pa_1(\pa_1w_1+\pa_2w_2))
=\varepsilon^NG_1,
\end{split}
\end{equation}
\begin{equation}\label{w2 equation}
\begin{split}
\tfrac{\rho'}{p'_{\rho}}\pa_tw_2+\tfrac{\rho'}{p'_{\rho}}\mathrm{u}'\cdot\bigtriangledown w_2+
\pa_2w_0+\tfrac{p'_{\theta}}{p'_{\rho}}\pa_2w_3\\-\tfrac{1}{p'_{\rho}}\varepsilon^2(\triangle w_2+C\pa_2(\pa_1w_1+\pa_2w_2))
=\varepsilon^NG_2,
\end{split}
\end{equation}
\begin{equation}\label{w3 equation}
\begin{split}
\beta'\pa_t w_3+\beta'\mathrm{u}'\cdot\bigtriangledown w_3+\tfrac{p'_\theta}{p'_\rho}(\pa_1w_1+\pa_2w_2)\\
-\eps^2\tfrac{\overline{\kappa}}{\theta'p'_\rho}\triangle w_3-I(w)
=\varepsilon^NG_3\,,
\end{split}
\end{equation}
where $\beta'=\frac{\rho'c_v(\theta')}{\theta'p'_\rho}$ and
\begin{equation*}
I(w)= \eps^2 \tfrac{\overline{\kappa}}{\theta'p'_\rho}
\Delta w_3\\
+\tfrac{1}{\theta'p'_\rho}(\mathbb{S}:\nabla(w_1,w_2)^{\top}+\mathbb{S}(w_1,w_2)^{\top}:\nabla \mathrm{u}')\,.
\end{equation*}
In the rest of this section, we will denote $\langle\cdot,\cdot\rangle$ as the inner product
in $L^2(\Omega)$ and $\|\cdot\|$ as the norm in  $L^2(\Omega)$, the generic constants $C_i,\,i=1,2,...$ are positive depending
only on $V'$ and its derivatives. First we derive the basic energy estimate on $w$:
\begin{lemma}\label{lemma1}
For any $t\in[0,T]$, we have that
$$\|w(t)\|^2+\varepsilon^2\sum_{j=1}^{3}\int^t_0\|\nabla w_j\|^2\,\mathrm{d}s\leq C \varepsilon^{2N+1}.$$
\end{lemma}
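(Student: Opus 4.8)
\emph{Approach.} The plan is to run the standard symmetric-hyperbolic energy estimate directly on the system \eqref{w0 equation}--\eqref{w3 equation}, exploiting that the principal part is symmetric, that the no-slip/characteristic boundary conditions kill all boundary integrals, and that the forcing is concentrated in a layer of width $\varepsilon$. First I would multiply \eqref{w0 equation} by $w_0$, \eqref{w1 equation} by $w_1$, \eqref{w2 equation} by $w_2$, \eqref{w3 equation} by $w_3$, integrate over $\Omega$ and sum (the regularity \eqref{65} justifies every integration by parts). The weighted time-derivative terms combine into $\tfrac{1}{2}\tfrac{d}{dt}\int_\Omega(\tfrac{1}{\rho'}w_0^2+\tfrac{\rho'}{p'_\rho}w_1^2+\tfrac{\rho'}{p'_\rho}w_2^2+\beta'w_3^2)\,dx$, plus zeroth-order contributions from $\partial_t$ of the coefficients; since $A_0(V')$ is smooth and positive definite this quantity is comparable to $\|w(t)\|^2$.

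\emph{First-order terms.} The convective terms $\tfrac{u'}{\rho'}\!\cdot\!\nabla w_j\,w_j$ are skew-symmetric after one integration by parts: the volume remainder is $-\tfrac12\int\mathrm{div}(c\,u')\,w_j^2$, bounded by $C\|w\|^2$, while the boundary contribution $\tfrac12\int_\Gamma (u'\!\cdot\!n)\,c\,w_j^2\,dS$ vanishes because $u'_1=0$ on $\Gamma$ (the background $V'$ obeys the no-slip condition). The acoustic coupling $\int(\partial_1 w_1+\partial_2 w_2)w_0+\int\partial_1 w_0\,w_1+\int\partial_2 w_0\,w_2$ reduces to the boundary integral $-\int_\Gamma w_0 w_1\,dS$, which vanishes since $w_1=0$ on $\Gamma$; likewise the temperature coupling carried by $p'_\theta/p'_\rho$ cancels up to a commutator $\leq C\|w\|^2$ and a boundary term killed by $w_1=w_3=0$ on $\Gamma$. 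Thus all first-order contributions are either exactly cancelled or absorbed into $C\|w\|^2$.

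\emph{Dissipative terms.} Integrating the Laplacians by parts produces no boundary terms, because $w_1=w_2=w_3=0$ on $\Gamma$, and yields $\varepsilon^2\sum_{j=1}^3 c_j\|\nabla w_j\|^2$ with $c_j>0$, together with the bulk-viscosity contribution $\tfrac{\varepsilon^2\overline{\xi}}{p'_\rho}\|\mathrm{div}(w_1,w_2)\|^2\geq0$. The commutators generated by differentiating the coefficients have the form $\varepsilon^2\int w_j\,\nabla(\text{coeff})\!\cdot\!\nabla w_j$ and are absorbed by Young's inequality into a small multiple of the good term $\varepsilon^2\|\nabla w_j\|^2$ plus $C\varepsilon^2\|w\|^2$; the lower-order stress terms in $I(w)$ are $O(\varepsilon^2|\nabla w|)$ and handled identically.

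\emph{Forcing and conclusion.} The crucial point, and the origin of the extra half power of $\varepsilon$, is the source term. Writing $G=\varepsilon Q^{-1}g_E+Q^{-1}g_B$ and using $\langle\varepsilon^N G,w\rangle\leq\tfrac12\|w\|^2+\tfrac12\varepsilon^{2N}\|G\|^2$, I would estimate $\|G\|_{L^2(\Omega)}$: the inner part contributes $O(\varepsilon)$, while the boundary-layer part obeys, by the change of variables $z_1=x_1/\varepsilon$ and the weighted bound \eqref{gB estimate}, $\|g_B(\cdot/\varepsilon,\cdot,t)\|_{L^2(\Omega)}^2=\varepsilon\,\|g_B(\cdot,\cdot,t)\|_{L^2(\Omega)}^2\leq C\varepsilon$, so $\|G\|_{L^2(\Omega)}\leq C\varepsilon^{1/2}$ and hence $\varepsilon^{2N}\|G\|^2\leq C\varepsilon^{2N+1}$. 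Collecting everything gives the differential inequality $\tfrac{d}{dt}\|w\|^2+\varepsilon^2\sum_{j=1}^3\|\nabla w_j\|^2\leq C\|w\|^2+C\varepsilon^{2N+1}$, and since $w(\cdot,0)=0$, Gr\"onwall's inequality over $[0,T]$ yields the stated bound. The main obstacle is purely bookkeeping: checking that every boundary term in the integrations by parts is genuinely annihilated by $w_1=w_2=w_3=0$ and $u'_1|_\Gamma=0$, and correctly extracting the $\varepsilon^{1/2}$ from the layer rescaling; the remaining commutator absorption is routine.
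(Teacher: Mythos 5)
Your proposal is correct and follows essentially the same route as the paper: the weighted $L^2$ energy identity obtained by pairing each equation with the corresponding component of $w$, cancellation of the first-order couplings up to commutators and boundary terms annihilated by $w_1=w_2=w_3=0$ on $\Gamma$ and $\mathrm{u}'_1|_\Gamma=0$, the bound $\|G\|^2\leq C\varepsilon$ coming from the $\sqrt{\varepsilon}$ gained in rescaling the boundary-layer source, and Gr\"onwall with zero initial data. You in fact spell out the origin of the $\varepsilon^{1/2}$ and the boundary-term bookkeeping more explicitly than the paper does, but the argument is the same.
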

\begin{proof} Taking the inner product in $L^2(\Omega)$ of \eqref{w0 equation}-\eqref{w3 equation} with $w$, by
integration by parts we have that:
\begin{equation}\label{w0 norm}
\tfrac{1}{2}\tfrac{\mathrm{d}}{\mathrm{d}t}\langle\tfrac{1}{\rho'}w_0,w_0\rangle-\tfrac{1}{2}\langle
[\pa_t(\tfrac{1}{\rho'})+\nabla\cdot\tfrac{u'}{\rho'}]w_0,w_0\rangle
-\sum_{j=1}^2\langle w_j+\pa_jw_0\rangle=\langle\varepsilon^N G_0,w_0\rangle,
\end{equation}
\begin{equation}\label{w1 norm}
\begin{aligned}
&\tfrac{1}{2}\tfrac{\mathrm{d}}{\mathrm{d}t}\langle\tfrac{\rho'}{p'_{\rho}}w_1,w_1\rangle
-\tfrac{1}{2}\langle[\pa_t(\tfrac{1}{\rho'})+\nabla\cdot\tfrac{\mathrm{u}'}{\rho'}]w_1,w_1\rangle+
\langle\pa_1w_0,w_1\rangle\\
&+\langle\tfrac{p'_{\theta}}{p'_{\rho}}\pa_1w_3,w_1\rangle+\varepsilon^2\{\langle\tfrac{1}{p'_{\rho}}w_1,w_1\rangle
+C\langle\tfrac{1}{p'_{\rho}}(\pa_1w_1+\pa_2w_2),\pa_1w_1\rangle\\
&-\langle\nabla\tfrac{1}{p'_{\rho}}\cdot\nabla w_1\rangle
-C\langle\pa_1(\tfrac{1}{p'_{\rho}})(\pa_1w_1+\pa_2w_2),w_1\rangle\}=\langle\varepsilon^NG^1,w_1\rangle\,,
\end{aligned}
\end{equation}
\begin{equation}\label{w2 norm}
\begin{aligned}
&\tfrac{1}{2}\tfrac{\mathrm{d}}{\mathrm{d}t}\langle\tfrac{\rho'}{p'_{\rho}}w_2,w_2\rangle
-\tfrac{1}{2}\langle[\pa_t(\tfrac{1}{\rho'})+\nabla\cdot\tfrac{\mathrm{u}'}{\rho'}]w_2,w_2\rangle+
\langle\pa_2w_0,w_2\rangle\\
&+\langle\tfrac{p'_{\theta}}{p'_{\rho}}\pa_2w_3,w_2\rangle+\varepsilon^2\{\langle\tfrac{1}{p'_{\rho}}w_2,w_2\rangle
+C\langle\tfrac{1}{p'_{\rho}}(\pa_1w_1+\pa_2w_2),\pa_2w_2\rangle\\
&-\langle\nabla\tfrac{1}{p'_{\rho}}\cdot\nabla w_2\rangle
-C\langle\pa_2(\tfrac{1}{p'_{\rho}})(\pa_1w_1+\pa_2w_2),w_2\rangle\}=\langle\varepsilon^NG^2,w_2\rangle\,,
\end{aligned}
\end{equation}
\begin{equation}\label{w2 norm}
\begin{aligned}
&\tfrac{1}{2}\tfrac{\mathrm{d}}{\mathrm{d}t}\langle\beta' w_3,w_3\rangle-\tfrac{1}{2}\langle
[\pa_t{\beta'}+\nabla\cdot(\beta'u')] w_3,w_3\rangle-\langle\tfrac{p'_\theta}{p'_\rho}
w_1,\pa_1w_3\rangle\\
&-\langle\tfrac{p'_\theta}{p'_\rho}w_2,\pa_2w_3\rangle-\langle\nabla\tfrac{p'_\theta}
{p'_\rho}\cdot(w_1,w_2)^{\top},w_3\rangle
+\varepsilon^2\{\langle\tfrac{\kappa_0(\theta')}{\theta'p'_\rho}\nabla w_3,\nabla w_3\rangle\\
&+
\langle\nabla\tfrac{\kappa_0(\theta')}{\theta'p'_\rho} w_3,\nabla w_3\rangle\}-\langle I(w),w_3\rangle
=\varepsilon^NG_3\,.
\end{aligned}
\end{equation}
Adding the above four equations shows that
\begin{equation}\label{w energy}
\tfrac{1}{2}\tfrac{\mathrm{d}}{\mathrm{d}t}\|w\|^2_{V'}+\varepsilon^2\sum_{j=1}^3\|\nabla w_j\|^2\leq C(\|w\|^2+\varepsilon^{2N}\|G\|^2),
\end{equation}
where
$$\|w\|_{V'}=\langle\tfrac{1}{\rho'}w_0,w_0\rangle+\langle\tfrac{\rho'}{p'_{\rho}}w_1,w_1\rangle
+\langle\tfrac{\rho'}{p'_{\rho}}w_2,w_2\rangle+\langle\beta' w_3,w_3\rangle,$$
and obviously
$$C_1 \|w\|^2\leq\|w\|^2_{V'}\leq C_2 \|w\|^2.$$
Then inequality \eqref{w energy} reads that
\begin{equation}
\|w(t)\|^2+\varepsilon^2\sum_{j=1}^3\int_0^t\|\nabla w_j\|^2\leq C(\int_0^t\|w(s)\|\,\mathrm{d}s+\varepsilon^{2N}\int^t_0\|G(\varepsilon,s)\|^2\,\mathrm{d}s)\,.
\end{equation}
It is easy to check that
$$\|G(\varepsilon,s)\|^2\leq C\varepsilon\,.$$ Then by Gronwall inequality
we complete the proof of this lemma.
\end{proof}

Next, we will get some estimates of the derivatives of $w$, by the compatibility condition, one could
get that $$\pa_tw_j=0,\;\pa_2 w_j=0\,,\quad j=1,2,\quad\text{on}\quad\!\Gamma\times[0,T].$$
However, we could not get zero boundary condition for $\pa_{x_1} w_j,$ then we need to define
the tangential derivatives of $w$:
$$D^{\mathrm{tan}}w=(\pa_t w,\chi(x_1)\pa_{x_1}w,\pa_{x_2}w),$$
where $\chi(s)\in C^3([0,\infty))$ satisfies
\begin{equation*}
\begin{aligned}
\chi(0)&=0\,,\quad\chi'(0)=1,\\
\chi'(s)&\geq 0\,,\quad \text{for}\quad\! s\in(0,\infty),\\
\chi(s)&=1\,,\quad \text{for}\quad\! s\geq 1.
\end{aligned}
\end{equation*}
We have the following estimate:
\begin{lemma}\label{lemma2}
For any $ t\in[0,T]$, we have that
$$\|D^{\mathrm{tan}}w(t)\|^2+\varepsilon^2\sum_{j=1}^3\int^t_0\|\nabla D^{\mathrm{tan}}w_j(s)\|^2\,\mathrm{d}s\leq C\varepsilon^{2N-1}.$$
\end{lemma}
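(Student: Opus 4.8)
The plan is to differentiate the system \eqref{w0 equation}--\eqref{w3 equation} by each of the three tangential operators $\partial_t$, $\chi(x_1)\partial_{x_1}$, $\partial_{x_2}$ and to repeat the energy argument of Lemma \ref{lemma1} for $D^{\mathrm{tan}}w$. The first thing to record is that the differentiated viscous/thermal unknowns again carry homogeneous Dirichlet data: since $w_j=0$ on $\Gamma\times[0,T]$ for $j=1,2,3$ and $\Gamma=\{x_1=0\}$, the genuinely tangential derivatives give $\partial_t w_j=\partial_{x_2}w_j=0$ on $\Gamma$, while $\chi(0)=0$ yields $\chi(x_1)\partial_{x_1}w_j|_{x_1=0}=0$. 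Hence $D^{\mathrm{tan}}w_j=0$ on $\Gamma$ for $j=1,2,3$, which is exactly why the cut-off $\chi$ is inserted in front of the normal derivative (the bare $\partial_{x_1}w_j$ does not vanish at the boundary). No boundary condition is needed for the density error $w_0$, whose equation carries no second-order term.

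Next I would take the $L^2(\Omega)$ inner product of the $\tau$-differentiated equations with $\tau w$, for each $\tau\in\{\partial_t,\chi\partial_{x_1},\partial_{x_2}\}$, and integrate by parts exactly as in Lemma \ref{lemma1}. The antisymmetric first-order coupling between $w_0$ and $(w_1,w_2,w_3)$ cancels up to terms where derivatives fall on the coefficients $V'$; the viscous and thermal terms produce the good dissipation $\eps^2\sum_{j=1}^3\|\nabla\tau w_j\|^2$; and all boundary contributions from these integrations by parts vanish because $\tau w_j|_\Gamma=0$ for $j=1,2,3$. Summing the three choices of $\tau$ yields, as in \eqref{w energy}, an inequality of the form
\begin{equation}\nonumber
\tfrac{1}{2}\tfrac{\mathrm{d}}{\mathrm{d}t}\|D^{\mathrm{tan}}w\|_{V'}^2+\eps^2\sum_{j=1}^3\|\nabla D^{\mathrm{tan}}w_j\|^2\leq C\big(\|D^{\mathrm{tan}}w\|^2+R_\eps+\eps^{2N}\|D^{\mathrm{tan}}G\|^2\big),
\end{equation}
where $R_\eps$ collects the commutators generated by $\chi\partial_{x_1}$ (the operators $\partial_t,\partial_{x_2}$ commute with $\chi=\chi(x_1)$, so their commutators with $\mathcal{L}_\eps$ only differentiate the coefficients and are harmless, bounded by $\|D^{\mathrm{tan}}w\|^2+\|w\|^2$).

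The main obstacle is the control of $R_\eps$, and within it the interaction of $\chi\partial_{x_1}$ with the \emph{non-dissipative} density component $w_0$, for which Lemma \ref{lemma1} provides no gradient bound. Treating $\langle\chi\partial_{x_1}(A_1\partial_{x_1}w),\chi\partial_{x_1}w\rangle$ directly and integrating by parts once (the boundary term vanishing since $\chi(0)=0$) produces a term weighted by $\chi^2$ and a term weighted by $\chi\chi'$. In the $\chi^2$ term the diagonal density contribution appears as $\chi^2(\partial_{x_1}w_0)^2\sim(\chi\partial_{x_1}w_0)^2$, hence is controlled by $\|D^{\mathrm{tan}}w\|^2$; in the $\chi\chi'$ term the diagonal density contribution carries the factor $u_1'/\rho'$, which vanishes on $\Gamma$ by the non-slip condition, so that $\chi\chi'(u_1'/\rho')(\partial_{x_1}w_0)^2\lesssim(\chi\partial_{x_1}w_0)^2$ as well. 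The off-diagonal couplings $2\,\chi\chi'\,\partial_{x_1}w_0\,\partial_{x_1}w_j$ ($j=1,2$) I would rewrite as $2\chi'(\chi\partial_{x_1}w_0)(\partial_{x_1}w_j)$, placing the bare normal derivative on a \emph{dissipative} component, so that one factor is absorbed into $\|D^{\mathrm{tan}}w\|^2$ and the other into $\int_0^t\|\partial_{x_1}w_j\|^2\,\mathrm{d}s\le C\eps^{2N-1}$, which follows from the dissipation of Lemma \ref{lemma1}. The viscous commutator $[\chi\partial_{x_1},\eps^2\Delta]=-\eps^2(\chi''\partial_{x_1}+2\chi'\partial_{x_1}^2)$ acts only on $w_1,w_2,w_3$; integrating the second-order piece $\eps^2\chi'\partial_{x_1}^2 w_j$ by parts once (again with vanishing boundary term) reduces it to $\eps^2\|\partial_{x_1}w_j\|^2$-type quantities, of order $\eps^{2N+1}$ after time integration.

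Finally I would estimate the forcing. Writing $G=\eps Q^{-1}g_E+Q^{-1}g_B(\eps,x_1/\eps,x_2,t)$, the derivatives $\partial_t,\partial_{x_2}$ preserve the scaling, but $\chi\partial_{x_1}$ acting on the boundary-layer profile produces $\eps^{-1}(\partial_{z_1}g_B)(x_1/\eps,\cdot)$; by the change of variables $x_1=\eps z_1$ together with \eqref{gB estimate} this gives $\|D^{\mathrm{tan}}G\|^2\le C\eps^{-1}$, one power of $\eps$ worse than $\|G\|^2\le C\eps$, which is precisely the source of the loss from $\eps^{2N+1}$ in Lemma \ref{lemma1} to $\eps^{2N-1}$ here. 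Since the compatibility conditions \eqref{gE compatibility}, \eqref{gB compatibility} together with $w(\cdot,0)=0$ force $D^{\mathrm{tan}}w(\cdot,0)=0$, Gronwall's inequality applied to the displayed estimate, with the right-hand side dominated by $C(\|D^{\mathrm{tan}}w\|^2+\eps^{2N-1})$ after inserting the above bounds, yields $\|D^{\mathrm{tan}}w(t)\|^2+\eps^2\sum_{j=1}^3\int_0^t\|\nabla D^{\mathrm{tan}}w_j\|^2\,\mathrm{d}s\le C\eps^{2N-1}$, as claimed.
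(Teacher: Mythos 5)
Your overall architecture is the same as the paper's: differentiate \eqref{w0 equation}--\eqref{w3 equation} by $\partial_t$, $\partial_{x_2}$ and $\chi(x_1)\partial_{x_1}$, exploit that these three operators preserve the homogeneous Dirichlet data of $w_1,w_2,w_3$, redo the Lemma \ref{lemma1} energy identity, and close with Gronwall. Your treatment of the $\chi\partial_{x_1}$ commutator (using that $\mathrm{u}_1'/\rho'$ vanishes on $\Gamma$ to absorb the cut-off, and pushing bare normal derivatives onto the dissipative components so that Lemma \ref{lemma1} controls them) is exactly the paper's (b1)--(b2). However, there is one assertion that is wrong as stated and hides the step where the argument could fail, plus a misdiagnosis of where the power $\varepsilon^{2N-1}$ comes from.

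First, the claim that the commutators generated by $\partial_t$ and $\partial_{x_2}$ ``only differentiate the coefficients and are harmless, bounded by $\|D^{\mathrm{tan}}w\|^2+\|w\|^2$'' is not correct. Differentiating the convection term in the $w_0$-equation produces $\partial_\tau\bigl(\tfrac{\mathrm{u}_1'}{\rho'}\bigr)\,\partial_1 w_0$ for $\tau\in\{\partial_t,\partial_{x_2}\}$, and the bare normal derivative $\partial_1 w_0$ belongs neither to $D^{\mathrm{tan}}w$ nor to anything Lemma \ref{lemma1} controls (there is no dissipation in the $w_0$-component, and Lemma \ref{lemma4} is proved \emph{after} this lemma). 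The rescue is precisely the device you reserve for the $\chi\partial_{x_1}$ case: since $\mathrm{u}'=0$ on $\Gamma\times[0,T]$, the tangential derivatives $\partial_t(\mathrm{u}_1'/\rho')$ and $\partial_2(\mathrm{u}_1'/\rho')$ also vanish on $\Gamma$, hence are $O(x_1)=O(\chi(x_1))$, which converts $\partial_1 w_0$ into $\chi\partial_1 w_0\in D^{\mathrm{tan}}w$; this is the paper's explicit step ``$\partial_t(\mathrm{u}'/\rho')=0$ on $\Gamma$ implies $\|\partial_t(\mathrm{u}'/\rho')\cdot\nabla w_0\|^2\le C\|D^{\mathrm{tan}}w\|^2$.'' Likewise the terms $\partial_\tau(\text{coeff})\cdot\nabla w_j$, $j=1,2,3$, contain bare $\partial_1 w_j$ and are \emph{not} bounded by $\|D^{\mathrm{tan}}w\|^2$; they are controlled only through $\int_0^t\sum_j\|\nabla w_j\|^2\,\mathrm{d}s\le C\varepsilon^{2N-1}$, obtained by dividing the dissipation of Lemma \ref{lemma1} by $\varepsilon^2$. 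This is the true origin of the drop from $\varepsilon^{2N+1}$ to $\varepsilon^{2N-1}$.

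Second, the forcing. The paper's computation (b3) shows that the cut-off exactly compensates the $1/\varepsilon$ from differentiating the layer profile: $\chi(\varepsilon z_1)/\varepsilon\le C\langle z_1\rangle$, so with the weighted bound \eqref{gB estimate} one gets $\|\chi\partial_{x_1}g_B\|^2\le C\varepsilon$, the \emph{same} order as $\|G\|^2$, and the forcing contributes only $O(\varepsilon^{2N+1})$. Your bound $\|D^{\mathrm{tan}}G\|^2\le C\varepsilon^{-1}$ is a valid (non-sharp) upper bound and still closes the Gronwall argument at the claimed order, so this is not fatal; but attributing the loss of two powers of $\varepsilon$ to the forcing rather than to the commutator terms involving $\nabla w_j$ is incorrect, and a reader following your accounting would not see why the $\partial_1 w_0$ commutator term does not destroy the estimate altogether.
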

\begin{proof} Applying $\pa_t$ to the equation \eqref{w0 equation}-\eqref{w3 equation}, we have that:

\begin{equation}\label{a10}
\begin{split}
&\tfrac{1}{\rho'}\pa_t(\pa_tw_0)+\tfrac{\mathrm{u}'}{\rho'}\cdot\nabla (\pa_tw_0) +(\pa_1(\pa_tw_1)+\pa_2(\pa_tw_2))\\&
+\pa_t\tfrac{1}{\rho'}\pa_tw_0+\pa_t\tfrac{\mathrm{u}'}{\rho'}\cdot\nabla w_0=\varepsilon^N\pa_t G_0,
\end{split}
\end{equation}

\begin{equation}\label{a11}
\begin{split}
&\tfrac{\rho'}{p'_{\rho}}\pa_t(\pa_tw_1)+\tfrac{\rho'}{p'_{\rho}}\mathrm{u}'\cdot\nabla (\pa_tw_1)+
\pa_1(\pa_tw_0)+\tfrac{p'_{\theta}}{p'_{\rho}}\pa_1(\pa_tw_3)\\&
+\pa_t(\tfrac{\rho'}{p'_{\rho}})\pa_t w_1+\pa_t(\tfrac{\rho'}{p'_{\rho}}\mathrm{u}')\cdot \nabla w_1+\pa_t(\tfrac{p'_{\theta}}{p'_{\rho}})
\pa_1w_3\\&
-\varepsilon^2\tfrac{1}{p'_{\rho}}(\Delta (\pa_tw_1)+C\pa_1(\pa_1(\pa_tw_1)+\pa_2(\pa_tw_2)))\\&-
\varepsilon^2\pa_t\tfrac{1}{p'_{\rho}}(\Delta w_1+C\pa_1(\pa_1w_1+\pa_2w_2))
=\varepsilon^N\pa_tG_1,
\end{split}
\end{equation}

\begin{equation}\label{a12}
\begin{split}
&\tfrac{\rho'}{p'_{\rho}}\pa_t(\pa_tw_2)+\tfrac{\rho'}{p'_{\rho}}u'\cdot\nabla (\pa_tw_2)+
\pa_2(\pa_tw_0)+\tfrac{p'_{\theta}}{p'_{\rho}}\pa_2(\pa_tw_3)\\&
+\pa_t(\tfrac{\rho'}{p'_{\rho}})\pa_t w_2+\pa_t(\tfrac{\rho'}{p'_{\rho}}u')\cdot \nabla w_2+\pa_t(\tfrac{p'_{\theta}}{p'_{\rho}})
\pa_2w_3\\&
-\varepsilon^2\tfrac{1}{p'_{\rho}}(\Delta (\pa_tw_2)+C\pa_2(\pa_1(\pa_tw_1)+\pa_2(\pa_tw_2)))\\&-
\varepsilon^2\pa_t\tfrac{1}{p'_{\rho}}(\Delta w_2+C\pa_2(\pa_1w_1+\pa_2w_2))
=\varepsilon^N\pa_tG_2,
\end{split}
\end{equation}

\begin{equation}\label{a13}
\begin{split}
&\beta'\pa_t (\pa_tw_3)+\beta'\mathrm{u}'\cdot\nabla (\pa_tw_3)+\tfrac{p'_\theta}{p'_\rho}(\pa_1(\pa_tw_1)+\pa_2(\pa_tw_2))\\
&+\pa_t\beta'\pa_tw_3+\pa_t{\beta'\mathrm{u}'}\cdot\nabla w_3+\pa_t(\tfrac{p'_\theta}{p'_\rho})(\pa_1 w_1+\pa_2 w_2)\\
&-\tfrac{\varepsilon^2\kappa_0(\theta')}{\theta'p'_\rho}\Delta (\pa_tw_3)-
\pa_t\tfrac{\varepsilon^2\kappa_0(\theta')}{\theta'p'_\rho}\Delta w_3-\pa_t I(w)
=\varepsilon^N\pa_tG_3.
\end{split}
\end{equation}

Take the inner product in $L^2(\Omega)$ of the above equations with $\pa_t w$,
integrating by parts we have:
\begin{equation}\label{w t norm}
\begin{split}
&\tfrac{1}{2}\tfrac{\mathrm{d}}{\mathrm{d}t}\|\pa_t w\|^2_{U'}+\varepsilon^2\sum_{j=1}^3\langle\nabla \pa_tw_j,\nabla \pa_tw_j\rangle
+\langle\pa_t(\tfrac{\rho'}{p'_{\rho}}\mathrm{u}')\cdot w_0,\pa_t w_0\rangle\\
&\sum_{j=1}^2\langle\pa_t(\tfrac{\rho'}{p'_{\rho}}\mathrm{u}')\cdot \nabla w_j,\pa_t w_j \rangle
+\sum_{j=1}^2\langle\pa_t(\tfrac{p'_{\theta}}{p'_{\rho}}) \pa_jw_3,\pa_t w_j \rangle\\
&+\langle\pa_t(\beta'\mathrm{u}')\cdot\nabla w_3,\pa_t w_3\rangle+\langle\pa_t(\tfrac{p'_{\theta}}{p'_{\rho}})
(\pa_1 w_1+\pa_2 w_2),\pa_t w_3\rangle\\
&\leq C(\|\pa_t w\|^2+\varepsilon^{2N}\|\pa_tG\|^2).
\end{split}
\end{equation}
As $\pa_t(\frac{u'}{\rho'})=0,$ on $\Gamma\times[0,T]$, we have that
$$\|\pa_t(\tfrac{\mathrm{u}'}{\rho'})\cdot\nabla w_0\|^2\leq C \|D^{\mathrm{tan}}w\|^2.$$
Integrating \eqref{w t norm} with respect with $t$, we have that

\begin{equation}\nonumber
\begin{aligned}
&\|\pa_t w(t)\|^2+\varepsilon^2\sum_{j=1}^3\int^t_0\|\nabla \pa_tw_j(s)\|^2\,\mathrm{d}s\\
\leq & C\{\int^t_0\|\pa_t w(s)\|^2\,\mathrm{d}s+\int^t_0[\sum_{j=1}^3\|\nabla w_j((s)\|^2+\|D^{\mathrm{tan}}w(s)\|^2]\}\,\mathrm{d}s+\varepsilon^{2N}\int^t_0\|\pa_tG(s)\|^2\,\mathrm{d}s\,.
\end{aligned}
\end{equation}
by Lemma \ref{lemma1} we could have that
\begin{equation}\label{a16}
\begin{split}
\|\pa_t w(t)\|^2+\varepsilon^2\sum_{j=1}^3\int^t_0\|\nabla \pa_tw_j(s)\|^2
\leq C [\int^t_0\|D^{\mathrm{tan}}w(s)\|^2]\}\,\mathrm{d}s+\varepsilon^{2N-1}].
\end{split}
\end{equation}

Similarly, apply $\pa_2$ to the equation \eqref{w0 equation}-\eqref{w3 equation}, by energy
estimate we could have
\begin{equation}\label{a17}
\begin{split}
\|\pa_2 w(t)\|^2+\varepsilon^2\sum_{j=1}^3\int^t_0\|\nabla \pa_2w_j(s)\|^2
\leq C [\int^t_0\|D^{\mathrm{tan}}w(s)\|^2]\}ds+\varepsilon^{2N-1}].
\end{split}
\end{equation}

Apply $\chi(x_1)\pa_1$ to \eqref{w0 equation}-\eqref{w3 equation} we have that

\begin{equation}\label{a18}
\begin{split}
&\tfrac{1}{\rho'}\pa_t(\chi\pa_1w_0)+\tfrac{\mathrm{u}'}{\rho'}\cdot\nabla (\chi\pa_1w_0) +(\pa_1(\chi\pa_1w_0)+\pa_2(\chi\pa_1w_0))\\
&-\tfrac{\mathrm{u}'}{\rho'}\chi'\pa_1w_0-\chi'\pa_1w_1
+\chi\pa_1(\tfrac{1}{\rho'})\pa_tw_0+\chi|pa_1(\tfrac{\mathrm{u}'}{\rho'})\cdot\nabla  w_0\\&=\varepsilon^N\chi\pa_1G_0,
\end{split}
\end{equation}

\begin{equation}\label{a19}
\begin{split}
&\tfrac{\rho'}{p'_{\rho}}\pa_t(\chi\pa_1w_1)+\tfrac{\rho'}{p'_{\rho}}\mathrm{u}'\cdot\nabla (\chi\pa_1w_1)+
\pa_1(\chi\pa_1w_0)+\tfrac{p'_{\theta}}{p'_{\rho}}\pa_1(\chi\pa_1w_3)\\
&-\tfrac{\rho'}{p'_{\rho}}\mathrm{u}'_1\pa_1w_1-\chi'\pa_1w_0-\tfrac{p'_{\theta}}{p'_{\rho}}\chi'\pa_1w_3
+\chi\pa_1(\tfrac{\rho'}{p'_{\rho}})\pa_tw_1\\&+\chi\pa_1(\tfrac{\rho'}{p'_{\rho}}\mathrm{u}')\cdot\nabla w_1+\chi\pa_1
(\tfrac{p'_{\theta}}{p'_{\rho}})\pa_1 w_3
-\tfrac{1}{p'_{\rho}}\varepsilon^2(\Delta(\chi\pa_1w_1)\\&+C\pa_1(\pa_1(\chi\pa_1w_1)+\pa_2(\chi\pa_1w_1)))
+\tfrac{1+C}{p'_{\rho}}\varepsilon^2(\chi''\pa_1w_1+2\chi'\pa^2_1w_1)\\&+\tfrac{C}{p'_{\rho}}\varepsilon^2\chi'\pa^2_{12}w_2
-\chi\varepsilon^2\pa_1(\tfrac{1}{p'_{\rho}})(\Delta w_1+C\pa_1(\pa_1w_1+\pa_2w_2))
=\varepsilon^N\chi\pa_1G_1,
\end{split}
\end{equation}

\begin{equation}\label{a20}
\begin{split}
&\tfrac{\rho'}{p'_{\rho}}\pa_t(\chi\pa_1w_2)+\tfrac{\rho'}{p'_{\rho}}\mathrm{u}'\cdot\nabla (\chi\pa_1w_2)+
\pa_2(\chi\pa_1w_0)+\tfrac{p'_{\theta}}{p'_{\rho}}\pa_2(\chi\pa_1w_3)\\
&-\tfrac{\rho'}{p'_{\rho}}\mathrm{u}'_1\pa_1w_2-\tfrac{p'_{\theta}}{p'_{\rho}}\chi'\pa_2w_3
+\chi\pa_1(\tfrac{\rho'}{p'_{\rho}})\pa_tw_2+\chi\pa_1(\tfrac{\rho'}{p'_{\rho}}\mathrm{u}')\cdot\nabla w_2\\&+\chi\pa_1
(\tfrac{p'_{\theta}}{p'_{\rho}})\pa_2 w_3
-\tfrac{1}{p'_{\rho}}\varepsilon^2(\Delta(\chi\pa_1w_2)+C\pa_2(\pa_1(\chi\pa_1w_1)+\pa_2(\chi\pa_1w_1)))\\&
+\tfrac{1}{p'_{\rho}}\varepsilon^2(\chi''\pa_1w_2+2\chi'\pa^2_1w_2)+\tfrac{C}{p'_{\rho}}\varepsilon^2\chi'\pa^2_{12}w_1\\&
-\chi\varepsilon^2\pa_1(\tfrac{1}{p'_{\rho}})(\Delta w_2+C\pa_2(\pa_1w_1+\pa_2w_2))
=\varepsilon^N\chi\pa_1G_2,
\end{split}
\end{equation}

\begin{equation}\label{a21}
\begin{split}
&\beta'\pa_t (\chi\pa_1w_3)+\beta'u'\cdot\nabla (\chi\pa_1w_3)+\tfrac{p'_\theta}{p'_\rho}(\pa_1(\chi\pa_1w_1)+\pa_2(\chi\pa_1w_2))\\&
-\beta'u'\chi'\pa_1w_3-\tfrac{p'_\theta}{p'_\rho}\chi'\pa_1w_1+\chi\pa_1\beta'\pa_t w_3+\chi\pa_1(\beta'u')\cdot\nabla w_3\\&+
\chi\pa_1(\tfrac{p'_\theta}{p'_\rho})(\pa_1w_1+\pa_2w_2)
-\tfrac{\varepsilon^2\kappa_0(\theta')}{\theta'p'_\rho}\Delta(\chi\pa_1w_3)+\tfrac{\varepsilon^2\kappa_0(\theta')}{\theta'p'_\rho}\chi''\pa_1 w_3\\&
+\tfrac{2\varepsilon^2\kappa_0(\theta')}{\theta'p'_\rho}\chi'\pa^2_1 w_3-
\pa_1(\tfrac{\varepsilon^2\kappa_0(\theta')}{\theta'p'_\rho})\chi\Delta w_3-\chi\pa_1I(w)
=\varepsilon^N\chi\pa_1G_3,
\end{split}
\end{equation}
Since $\mathrm{u}'_1=0$ on $\Gamma\times [0,T],$ we have that
\begin{equation}\label{b1}
\langle\tfrac{\mathrm{u}'_1}{\rho'}\chi'\pa_1w_0,\chi\pa_1 w_0\rangle\leq C\|\chi\pa_1 w_0\|^2,
\end{equation}
and we also have that
\begin{equation}\label{b2}
\langle\tfrac{\chi'}{p'_\rho}\pa_{11} w_1, \chi\pa_1w_1\rangle=-\tfrac{1}{2}(\langle\pa_1(\tfrac{\chi'}{p'_\rho})
\pa_1w_1,\chi\pa_1 w_1\rangle-\langle\tfrac{\chi'}{p'_\rho}
\pa_1w_1,\chi'\pa_1 w_1\rangle),
\end{equation}
and
\begin{equation}\label{b3}
\begin{split}
&\int_{\Omega}|\chi(x_1)\pa_{x_1}g_B(\tfrac{x_1}{\varepsilon},x_2,t)|^2\,\mathrm{d}x_1\mathrm{d}x_2\\=&\varepsilon
\int_{\Omega}|\tfrac{\chi(\varepsilon z_1)}{\varepsilon}\pa_{z_1}g_B(z_1,x_2,t)|^2\,\mathrm{d}z_1\mathrm{d}x_2\\
\leq &C \varepsilon.
\end{split}
\end{equation}

Let us take the inner product in $L^2(\Omega)$ of \eqref{a18})-\eqref{a21} with $\chi\pa_1 w_1$, integrating with respect
to $t$. Then by \eqref{b1}, \eqref{b2}) and \eqref{b3} we could get that
\begin{equation}\label{a22}
\begin{split}
&\|\chi(x_1)\pa_1 w(t)\|^2+\varepsilon^2\sum_{j=1}^3\int_0^t\|\nabla(\chi\pa_1 w_j(s))\|^2\,\mathrm{d}s\\
\leq &C(\int^t_0\|D^{\mathrm{tan}}w(s)\|^2ds+\varepsilon^{2N-1}).
\end{split}
\end{equation}

Collecting the estimates \eqref{a16})-\eqref{a17} and \eqref{a22}, we complete the proof of this lemma.
\end{proof}

\begin{lemma}\label{lemma3}
For all $t\in[0,T]$, we have the following estimate
$$\sum_{j=1}^3\|\pa_1 w_j(t)\|^2\leq C\varepsilon^{2N}.$$
\end{lemma}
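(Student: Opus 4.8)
The plan is to estimate the three normal derivatives $\partial_1 w_1,\partial_1 w_2,\partial_1 w_3$ by means of normal-differentiated energy estimates for the parabolic equations \eqref{w1 equation}--\eqref{w3 equation}, fed by the two energy inequalities already in hand. The target exponent $\varepsilon^{2N}$ is precisely the geometric mean of the $\varepsilon^{2N+1}$ of Lemma \ref{lemma1} and the $\varepsilon^{2N-1}$ of Lemma \ref{lemma2}, which tells me the bound must come from interpolating between the size of $w_j$ and that of one further derivative, and hence that \emph{both} previous lemmas (their pointwise-in-$t$ norms and their time-integrated dissipations $\varepsilon^2\int_0^t\|\nabla D^{\mathrm{tan}}w_j\|^2$) will be needed. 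A useful auxiliary identity is the continuity equation \eqref{w0 equation}, which gives
\begin{equation}\nonumber
\partial_1 w_1=-\partial_2 w_2-\tfrac{1}{\rho'}\partial_t w_0-\tfrac{\mathrm{u}'}{\rho'}\cdot\nabla w_0+\varepsilon^N G_0,
\end{equation}
expressing $\partial_1 w_1$ (and the divergence appearing in \eqref{w3 equation}) through tangential derivatives, the forcing, and the single term $\tfrac{\mathrm{u}_1'}{\rho'}\partial_1 w_0$ whose coefficient vanishes on $\Gamma$ and is therefore comparable to the cut-off normal derivative $\chi\partial_1 w_0$ already controlled; this decouples the pressure/temperature couplings from the genuinely parabolic part.

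For $w_2$ and $w_3$ I would apply $\partial_1$ to \eqref{w2 equation} and \eqref{w3 equation} and test with $\partial_1 w_2$, $\partial_1 w_3$. The viscous/diffusive terms then produce the good dissipation $\varepsilon^2\|\partial_1\nabla w_j\|^2$, which absorbs, via Young's inequality, the top-order pieces of the commutators and lower-order terms. The zeroth- and first-order contributions are estimated using the pointwise tangential bounds of Lemma \ref{lemma2} and the time-integrated control of Lemma \ref{lemma1}, the convective term $\mathrm{u}_1'\partial_1 w_2$ being integrated by parts using $\mathrm{u}_1'|_\Gamma=0$. The forcing $\varepsilon^N\partial_1 G$ is split into its interior part $\varepsilon^{N+1}\partial_1 g_E$ and its boundary-layer part $\varepsilon^N\partial_1 g_B$, whose normal derivative carries the extra factor $\varepsilon^{-1}$ from the fast variable $z_1=x_1/\varepsilon$; this is handled by the change of variables that supplies the compensating $\sqrt{\varepsilon}$, after which \eqref{gE estimate}--\eqref{gB estimate} apply. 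Integrating in $t$ from the vanishing initial data \eqref{w BC} and invoking Gronwall then yields the desired pointwise-in-$t$ bound on $\|\partial_1 w_j(t)\|^2$.

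The hard part is the boundary contribution generated by the normal differentiation, namely $\varepsilon^2\int_\Gamma \tfrac{1}{p'_\rho}\,\partial_1^2 w_j\,\partial_1 w_j\,\mathrm{d}x_2$, which does not vanish because the test function $\partial_1 w_j$ has no zero trace on $\Gamma$. My plan is to dispose of it by evaluating the original equation on $\Gamma$: there $w_j,\partial_t w_j,\partial_2 w_j$ and $\mathrm{u}_1'$ all vanish, so \eqref{w2 equation} collapses to a relation of the schematic form $\varepsilon^2\partial_1^2 w_2|_\Gamma=p'_\rho\big(\partial_2 w_0-\varepsilon^N G_2\big)|_\Gamma-\varepsilon^2 C\,\partial_2\partial_1 w_1|_\Gamma$, reducing the boundary term to traces of tangential quantities that are then controlled by a trace/interpolation inequality against the interior dissipation $\varepsilon^2\|\partial_1\nabla w_j\|^2$. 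Keeping track of the powers of $\varepsilon$ through this boundary contribution, and making sure the coupling terms and the singular $\varepsilon^{-2}$ balance in the interior conspire to produce the sharp exponent $\varepsilon^{2N}$ rather than the cruder $\varepsilon^{2N-1}$ that a direct Cauchy--Schwarz estimate would give, is the delicate point where I expect the real work to lie.
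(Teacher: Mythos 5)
Your plan is far more elaborate than what is needed here, and its crucial step does not go through. The paper's proof of this lemma is a one-line rearrangement of the basic energy inequality \eqref{w energy}, which already contains the dissipation $\varepsilon^2\sum_{j=1}^3\|\nabla w_j(t)\|^2$ \emph{pointwise in time}: one simply writes
\begin{equation}\nonumber
\varepsilon^2\sum_{j=1}^3\|\pa_1 w_j(t)\|^2\le \Bigl|\tfrac{1}{2}\tfrac{\mathrm{d}}{\mathrm{d}t}\|w(t)\|^2_{V'}\Bigr|+C\bigl(\|w(t)\|^2+\varepsilon^{2N}\|G\|^2\bigr)\le C\bigl(\varepsilon\|\pa_t w(t)\|^2+\tfrac{1}{\varepsilon}\|w(t)\|^2+\varepsilon^{2N+1}\bigr),
\end{equation}
and then inserts $\|w(t)\|^2\le C\varepsilon^{2N+1}$ from Lemma \ref{lemma1} and $\|\pa_t w(t)\|^2\le C\varepsilon^{2N-1}$ from Lemma \ref{lemma2}. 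No further differentiation of the equations, no new integration by parts, and no boundary terms are involved. Your intuition that the exponent is the geometric mean of those of Lemmas \ref{lemma1} and \ref{lemma2} is exactly right, but the interpolation happens in the Young inequality $\|w\|\,\|\pa_t w\|\le \varepsilon\|\pa_t w\|^2+\varepsilon^{-1}\|w\|^2$ applied to $\tfrac{\mathrm{d}}{\mathrm{d}t}\|w\|^2_{V'}$, not in a newly differentiated energy estimate.

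The genuine gap in your proposal sits exactly where you locate ``the real work'': the boundary integral $\varepsilon^2\int_\Gamma \pa_{11}w_j\,\pa_1 w_j\,\mathrm{d}x_2$ produced by testing the normally differentiated equation with $\pa_1 w_j$, whose trace does not vanish. Your fix --- restricting the PDE to $\Gamma$ to express $\varepsilon^2\pa_{11}w_j|_\Gamma$ through tangential traces --- replaces this term by traces of $\pa_2 w_0$, $\pa_{12}w_1$, and similar quantities; controlling such traces by interpolation against the interior requires bounds on mixed second derivatives such as $\|\pa_{12}w_0\|$ and $\|\chi\pa_{12}w_1\|$, which are only established in Lemmas \ref{lemma5} and \ref{lemma6}, and those lemmas in turn rely on the present lemma (and on Lemma \ref{lemma4}). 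The argument as proposed is therefore circular, and the key estimate is never actually carried out. This obstruction with un-cut normal derivatives is precisely why the paper never applies $\pa_1$ to the momentum and temperature equations at this stage: it only uses the weighted derivative $\chi(x_1)\pa_1$ (whose weight vanishes on $\Gamma$, killing the boundary terms) in Lemma \ref{lemma2}, and then recovers the full $\pa_1 w_j$, $j=1,2,3$, for free from the dissipation already present in \eqref{w energy}.
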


\begin{proof}
By estimate \eqref{w energy}, we have that
\begin{equation}\label{a23}
\begin{aligned}
\varepsilon^2\sum_{j=1}^3\|\pa_{1}w_j(t)\|^2
\leq&|\tfrac{1}{2}\tfrac{\mathrm{d}}{\mathrm{d}t}\|w(t)\|^2_{U'}|+C_2 (\|w(t)\|^2+\varepsilon^{2N}\|G\|^2)\\
\leq &C\{\varepsilon\|\pa_t w(t)\|^2+\tfrac{1}{\varepsilon}\|w(t)\|^2+\|w(t)\|^2+\varepsilon^{2N+1}\}\\
\end{aligned}
\end{equation}
by Lemma \ref{lemma1} and Lemma \ref{lemma2} we have that
$$\|w(t)\|^2\leq C \varepsilon^{2N+1},$$
and
$$\|\pa_t w(t)\|^2\leq C \varepsilon^{2N-1}\,.$$
then we complete the proof of this lemma.
\end{proof}

\begin{lemma}\label{lemma4}
For any $ t\in[0,T]$, the following estimate holds
$$\varepsilon^2\|\pa_1w_0(t)\|^2+ \eps\int_0^t\|\pa_1 w_0(s)\|^2\,\mathrm{d}s\leq C\varepsilon^{2N-1}.$$
\end{lemma}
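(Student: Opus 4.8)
The plan is to recover $\pa_1 w_0$ from the momentum equation \eqref{w1 equation}, where it appears undifferentiated as the pressure-gradient term, and then to use the continuity equation \eqref{w0 equation} to manufacture a time-derivative structure. Concretely, I would take the $L^2(\Omega)$ inner product of \eqref{w1 equation} with $\pa_1 w_0$. This produces the coercive term $\|\pa_1 w_0\|^2$, together with $\langle\tfrac{\rho'}{p'_\rho}(\pa_t w_1+\mathrm{u}'\cdot\nabla w_1),\pa_1 w_0\rangle$, the coupling term $\langle\tfrac{p'_\theta}{p'_\rho}\pa_1 w_3,\pa_1 w_0\rangle$, the forcing $\varepsilon^N\langle G_1,\pa_1 w_0\rangle$, and the viscous term $\varepsilon^2\langle\tfrac1{p'_\rho}(\triangle w_1+C\pa_1\divv w),\pa_1 w_0\rangle$. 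After Young's inequality all the non-viscous contributions are harmless: $\|\pa_t w_1\|$ and $\|\pa_2 w_1\|$ are of order $\varepsilon^{N-1/2}$ by Lemma \ref{lemma2}, while $\|\pa_1 w_1\|$ and $\|\pa_1 w_3\|$ are of order $\varepsilon^{N}$ by Lemma \ref{lemma3}, and $\|G_1\|^2\le C\varepsilon$; the spurious multiples of $\|\pa_1 w_0\|^2$ they generate are absorbed into the coercive term.

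The hard part is the viscous term, specifically the normal second derivative $\pa_{11}w_1$ buried inside $\triangle w_1+C\pa_1\divv w$, for which no $L^2$ bound is available up to $\Gamma$ (only $\chi\pa_{11}w_1$ is controlled, via Lemma \ref{lemma2}). I would circumvent this by writing $\triangle w_1+C\pa_1\divv w=(1+C)\pa_1\divv w+\pa_2(\pa_2 w_1-\pa_1 w_2)$. The genuinely tangential piece $\pa_2(\pa_2 w_1-\pa_1 w_2)=\pa_{22}w_1-\pa_{12}w_2$ is handled directly, since $\varepsilon^2\|\pa_2(\pa_2 w_1-\pa_1 w_2)\|\,\|\pa_1 w_0\|\le\tfrac18\|\pa_1 w_0\|^2+C\varepsilon^4\|\nabla\pa_2 w\|^2$, and Lemma \ref{lemma2} gives $\int_0^t\|\nabla\pa_2 w\|^2\le C\varepsilon^{2N-3}$, so this contributes at the acceptable order $\varepsilon^{2N+1}$.

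For the divergence piece I would substitute $\divv w=\varepsilon^N G_0-\tfrac1{\rho'}\pa_t w_0-\tfrac{\mathrm{u}'}{\rho'}\cdot\nabla w_0$ from \eqref{w0 equation}. Its leading contribution $-\varepsilon^2(1+C)\langle\tfrac1{p'_\rho\rho'}\pa_t\pa_1 w_0,\pa_1 w_0\rangle=-\tfrac{\varepsilon^2(1+C)}{2}\tfrac{\mathrm{d}}{\mathrm{d}t}\langle\tfrac1{p'_\rho\rho'}\pa_1 w_0,\pa_1 w_0\rangle+\text{l.o.t.}$ is precisely the mechanism that, after integration in time, yields the $\varepsilon^2\|\pa_1 w_0(t)\|^2$ term on the left. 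The leftover contributions carry $\varepsilon^2\pa_{11}w_0$ and $\varepsilon^2\pa_{12}w_0$ paired with $\pa_1 w_0$; these I would integrate by parts, the normal one using crucially that $\mathrm{u}'_1=0$ on $\Gamma$ so the boundary term drops, leaving only $C\varepsilon^2\|\pa_1 w_0\|^2$, which is absorbable. Integrating the resulting identity in $t$ (the data vanish at $t=0$) and absorbing the small multiples of $\|\pa_1 w_0\|^2$ and $\varepsilon^2\|\pa_1 w_0\|^2$ yields $\varepsilon^2\|\pa_1 w_0(t)\|^2+\int_0^t\|\pa_1 w_0\|^2\,\mathrm{d}s\le C\varepsilon^{2N-1}$, which is in fact stronger than the stated estimate since $\varepsilon<1$. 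The decisive obstruction throughout is the uncontrolled boundary behaviour of $\pa_{11}w_1$; the divergence-form rewriting together with the continuity-equation substitution is exactly what avoids it, while the vanishing of $\mathrm{u}'_1$ on $\Gamma$ is what renders the remaining normal integration by parts innocuous.
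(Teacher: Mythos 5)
Your proof is correct and is essentially the same as the paper's, with the elimination of the uncontrollable $\pa_{11}w_1$ merely performed in the opposite direction: the paper applies $\pa_1$ to \eqref{w0 equation}, solves \eqref{w1 equation} for $\pa_{11}w_1$ (its formula \eqref{c1}) and substitutes before testing with $\pa_1 w_0$, while you test \eqref{w1 equation} with $\pa_1 w_0$ and substitute $\pa_1\divv w$ from \eqref{w0 equation} --- up to a smooth positive weight these yield the same identity, with the coercive $\|\pa_1 w_0\|^2$ coming from the pressure gradient and the $\varepsilon^2\tfrac{\mathrm{d}}{\mathrm{d}t}\|\pa_1 w_0\|^2$ from the time derivative in the continuity equation. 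Your handling of the remaining terms (Lemmas \ref{lemma2} and \ref{lemma3}, the vanishing of $\mathrm{u}'_1$ on $\Gamma$ for the normal integration by parts) matches the paper's, and like the paper you actually obtain the slightly stronger bound $\varepsilon^2\|\pa_1 w_0(t)\|^2+\int_0^t\|\pa_1 w_0(s)\|^2\,\mathrm{d}s\leq C\varepsilon^{2N-1}$.
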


\begin{proof}
Apply $\pa_1$ to the equation \eqref{w0 equation} and we get that
\begin{equation}\label{a24}
\begin{split}
&\tfrac{1}{\rho'}\pa_t(\pa_1 w_0)+\tfrac{\mathrm{u}'}{\rho'}\cdot\nabla (\pa_1 w_0)+\pa_1(\tfrac{1}{\rho'} \mathrm{u}')\cdot\nabla
w_0+\pa_{11}w_1+\pa_{12}w_2\\=&\varepsilon^N\pa_1 G_0,
\end{split}
\end{equation}
by \eqref{w1 equation}, we have that
\begin{equation}\label{c1}
\begin{split}
\pa_{11}w_1=&\tfrac{1}{\varepsilon^2(1+C)}(\rho'\pa_tw_1+\rho'\mathrm{u}'\cdot\nabla w_1\\&+p'_{\rho}\pa_1 w_0+p'_\theta\pa_1w_3
-C\pa_{12}w_2-p'_\rho\varepsilon^NG_1)\,.
\end{split}
\end{equation}
Thus we could eliminate $\pa_{11}w_1$ from \eqref{a24}, and take the inner product in $L^2(\Omega)$ of this equality
with $\pa_1 w_0(t)$, and integrating with respect to $t$, we obtain
\begin{equation}
\begin{split}
&\|\pa_1w_0(t)\|^2+\frac{1}{\varepsilon^2}\int^t_0\|\pa_1 w_0(s)\|^2\,\mathrm{d}s\\
\leq& C (\int^t_0\|\pa_1 w_0(s)\|^2ds+\frac{1}{\varepsilon^2}\int^t_0\sum_{j=1}^3\|D^{\mathrm{tan}}w_j(s)\|^2\,\mathrm{d}s+\varepsilon^{2N-1})
\end{split}
\end{equation}
by Lemma \ref{lemma2}, we deduce that
$$\|\pa_1 w_0(t)\|^2+\frac{1}{\varepsilon^2}\int^t_0\|\pa_1 w_0(t)\|^2 ds\leq C \varepsilon^{2N-3},$$
then we complete the proof of this lemma.
\end{proof}

Next, we will derive some estimates of higher-order derivatives of $w$.

\begin{lemma}\label{lemma5}
For any $ t\in[0,T]$, we have that
\begin{equation}\label{a25}
\begin{split}
&\|\pa_{22}w(t)\|^2+\|\chi\pa_{12}w(t)\|^2+\|\chi\pa_{11}w(t)\|^2+\int^t_0\sum_{j=1}^2\|\chi\pa_{tj}w_0(s)\|^2\,\mathrm{d}s\\
&+\varepsilon^2\sum_{j=1}^3\int^t_0(\|\nabla(\pa_{22}w_j(s))\|^2+\|\chi\nabla(\pa_{12}w_j(s))\|^2+\|\nabla(\chi\pa_{11}w_j(s))\|^2\\
&+\|\pa_{11}w_j(s)\|^2)\,\mathrm{d}s \leq C\varepsilon^{2N-3}.
\end{split}
\end{equation}
\end{lemma}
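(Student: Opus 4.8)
The plan is to continue the energy hierarchy of Lemmas \ref{lemma1}--\ref{lemma4} one differential order higher, applying every second-order derivative operator to the system \eqref{w0 equation}--\eqref{w3 equation} and inserting the cutoff $\chi(x_1)$ whenever a normal derivative $\pa_1$ appears, since $\pa_1 w_j$ need not vanish on $\Gamma$. I would split the second derivatives according to how many normal derivatives they carry and treat the three groups by weighted $L^2$ estimates, all coupled into a single Gronwall inequality whose inhomogeneity is governed by the boundary-layer source $g_B(x_1/\varepsilon,\cdot)$.

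First, for the purely tangential second derivative $\pa_{22}$ (and, analogously, $\pa_{2t}$ and $\pa_{tt}$), I apply $\pa_{22}$ to the four equations, pair with $\pa_{22}w$ and integrate by parts. Because $\pa_2$ is tangential, $\pa_{22}w_j=0$ on $\Gamma$ for $j=1,2,3$, so the boundary terms are as harmless as in Lemma \ref{lemma1}; the parabolic components produce the dissipation $\varepsilon^2\sum_j\|\nabla\pa_{22}w_j\|^2$, while all coefficient commutators and the linear coupling between components are bounded by $C\bigl(\|\pa_{22}w\|^2 + \text{lower-order norms}\bigr)$, the latter absorbed using Lemmas \ref{lemma1}--\ref{lemma2}. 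For the mixed derivative I apply $\chi(x_1)\pa_1$ to the already $\pa_2$-differentiated system, reproducing the commutator structure \eqref{a18}--\eqref{a21} with an extra factor $\pa_2$; pairing with $\chi\pa_{12}w$, the terms carrying $\chi'$ and $\chi''$ are uniformly bounded and the boundary contributions are absorbed via $\mathrm{u}_1'=0$ on $\Gamma$ as in \eqref{b1}--\eqref{b2}.

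The decisive estimate is the normal-normal one. Here I apply $\chi\pa_1$ to the $\chi\pa_1$-differentiated equations and pair with $\chi\pa_{11}w$; this yields the dissipation $\varepsilon^2\sum_j\|\nabla(\chi\pa_{11}w_j)\|^2$ together with a large but controllable family of $\chi'$, $\chi''$ commutator terms, each estimated by the first-order normal and tangential bounds already available from Lemmas \ref{lemma2}--\ref{lemma4}. The source sets the final exponent: since $\pa_{11}$ now lands on the profile $g_B(x_1/\varepsilon,\cdot)$, the change of variables $x_1=\varepsilon z_1$ gives $\int|\chi\pa_{11}g_B|^2\,\mathrm{d}x = \varepsilon^{-3}\int|\pa_{z_1z_1}g_B|^2\,\mathrm{d}z_1\mathrm{d}x_2$, whence $\varepsilon^{2N}\|\chi\pa_{11}G\|^2\le C\varepsilon^{2N-3}$, producing the stated bound. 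To close the hyperbolic component $w_0$, which carries no dissipation, I invoke the Lemma \ref{lemma4} mechanism at one higher order: differentiating \eqref{w0 equation} and substituting the viscous relation \eqref{c1} to express $\pa_{11}w_1$ (and, symmetrically, the $w_2$ analogue) through first-order quantities furnishes precisely the auxiliary controlled quantities $\int_0^t\sum_{j=1}^2\|\chi\pa_{tj}w_0\|^2\,\mathrm{d}s$ and $\varepsilon^2\sum_j\int_0^t\|\pa_{11}w_j\|^2\,\mathrm{d}s$ that appear on the left side.

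Summing the three groups, adjoining these auxiliary identities, and applying Gronwall's inequality with the lower-order bounds of Lemmas \ref{lemma1}--\ref{lemma4} as forcing then gives \eqref{a25}. The main obstacle is the $\chi\pa_{11}$ estimate, and the difficulty is twofold. First, the numerous $\chi'$, $\chi''$ commutator terms must be bookkept so that each is dominated by the \emph{already established} tangential and first-order normal quantities rather than by an uncontrolled second normal derivative. Second, the non-dissipative continuity variable $w_0$ must have its normal derivatives recovered entirely through the substitution \eqref{c1} and fed back into the parabolic estimates without creating a circular dependence on $\varepsilon^2\|\pa_{11}w_j\|^2$.
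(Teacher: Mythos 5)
Your proposal follows essentially the same route as the paper: energy estimates for $\pa_{22}$, $\pa_2(\chi\pa_1)$ and $\chi\pa_{11}$ applied to \eqref{w0 equation}--\eqref{w3 equation}, with the non-cutoff quantities $\int_0^t\|\pa_{11}w_j\|^2\,\mathrm{d}s$ recovered from the viscous substitution \eqref{c1} (the paper's \eqref{a33}--\eqref{a34}), the $w_0$ derivatives recovered from the differentiated continuity equation (the paper's \eqref{a28}--\eqref{a29}), and the boundary-layer scaling of $g_B$ fixing the exponent $\varepsilon^{2N-3}$ before closing with Gronwall. The only cosmetic difference is the order in which $\pa_2$ and $\chi\pa_1$ are applied for the mixed derivative, which is immaterial since $\chi$ depends only on $x_1$.
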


\begin{proof}
First, apply $\pa_{22}$ to the equation \eqref{w0 equation}-\eqref{w3 equation}, and
take the inner product in $L^2(\Omega)$ with $\pa_{22}w$, we could get that
\begin{equation}\label{a26}
\begin{split}
&\|\pa_{22}w(t)\|^2+\varepsilon^2\sum_{j=1}^3\int^t_0\|\nabla \pa_{22}w_j(s)\|^2\,\mathrm{d}s\\
\leq&C(\int^t_0(\|\pa_{22}w_0(s)\|^2+\|\pa_{tx_2}w_0(s)\|^2+\|\pa_2(\chi\pa_1 w_0(s))\|^2)\,\mathrm{d}s +\varepsilon^{2N-3}).
\end{split}
\end{equation}
Applying $\pa_{2}$ to the equation (\ref{a18})-(\ref{a21}), and
taking the inner product in $L^2(\Omega)$ with $\pa_{2}(\chi\pa_1w)$, we could get that
\begin{equation}\label{a27}
\begin{split}
&\|\pa_2(\chi\pa_1 w(t))\|^2+\varepsilon^2\sum_{j=1}^3\int_0^t\|\nabla\pa_2(\chi\pa_1 w_j(s))\|^2ds\\
\leq& C(\int_0^t[\|\pa_2(\chi\pa_1 w(s))\|^2+\|\pa_{22}w_0(s)\|^2+\|\chi\pa_{11}w_0(s)\|^2\\&
+\|\pa_{tx_2}w_0(s)\|^2+\|\pa_t(\chi\pa_1 w_0(s))\|^2]ds+\varepsilon^{2N-3}),
\end{split}
\end{equation}
take inner product in $L^2(\Omega)$ of (\ref{a18}) with $\chi\pa_1 w_0,$ we derive
\begin{equation}\label{a28}
\int_0^t\|\pa_t(\chi \pa_1w_0(s))\|^2\,\mathrm{d}s \leq C \{\int_0^t[\|\pa_2(\chi \pa_1w_0(s))\|^2+\|\chi\pa_{11}w_0\|^2]ds+\varepsilon^{2N-3}\}\,.
\end{equation}
By the same way, applying $\pa_2$ to \eqref{w0 equation} and taking inner product in $L^2(\Omega)$ with
$\pa_2 w_0,$ we have that
\begin{equation}\label{a29}
\int_0^t\|\pa_t( \pa_2w_0(s))\|^2\,\mathrm{d}s \leq C\{\int_0^t[\|\pa_2(\chi \pa_1w_0(s))\|^2+\|\chi\pa_{22}w_0\|^2]ds+\varepsilon^{2N-3}\}\,.
\end{equation}
Plugging  \eqref{a28} and \eqref{a29} into \eqref{a27}, we obtain that
\begin{equation}\label{a30}
\begin{split}
&\|\pa_2(\chi\pa_1 w(t))\|^2+\varepsilon^2\sum_{j=1}^3\int_0^t\|\nabla\pa_2(\chi\pa_1 w_j(s))\|^2\,\mathrm{d}s\\
\leq& C(\int_0^t[\|\pa_2(\chi\pa_1 w(s))\|^2+\|\pa_{22}w_0(s)\|^2+\|\chi\pa_{11}w_0(s)\|^2]\,\mathrm{d}s+\varepsilon^{2N-3}).
\end{split}
\end{equation}
Also by (\ref{a26}) and (\ref{a29}) we get
\begin{equation}\label{a31}
\begin{split}
&\|\pa_{22}w(t)\|^2+\varepsilon^2\sum_{j=1}^3\int^t_0\|\nabla \pa_{22}w_j(s)\|^2\,\mathrm{d}s\\
\leq&C (\int^t_0(\|\pa_{22}w_0(s)\|^2+\|\pa_2(\chi\pa_1 w_0(s))\|^2)\,\mathrm{d}s+\varepsilon^{2N-3}).
\end{split}
\end{equation}

Next, applying $\chi\pa_{11}$ to the equation \eqref{w0 equation}-\eqref{w3 equation}
and taking inner product in $L^2(\Omega)$ with $\chi\pa_{11}w,$ then we could get that
\begin{equation}\label{a32}
\begin{split}
&\|\chi\pa_{11} w(t)\|^2+\varepsilon^2\sum_{j=1}^3\int_0^t\|\nabla(\chi\pa_{11} w_j(s))\|^2\,\mathrm{d}s\\
\leq& C (\int_0^t[\|\chi\pa_{11}w(s))\|^2+\|\pa_2(\chi\pa_1w_0(s))\|^2+\varepsilon^2\sum_{j=1}^3\|\pa_{11}w_j(s)\|^2]\,\mathrm{d}s+\varepsilon^{2N-3}).
\end{split}
\end{equation}
From \eqref{c1}, we could get that
\begin{equation}\label{a33}
\begin{split}
&\int_0^t\|\pa_{11}w_1(s)\|^2\,\mathrm{d}s\\
\leq &C\{\frac{1}{\varepsilon^4}\int_0^t(\|D^{\mathrm{tan}}w_1(s)\|^2+\|\pa_1w_0(s)\|^2)\,\mathrm{d}s\\
&+\int_0^t(\|\pa_{22}w_1(s)\|^2+\|\pa_{12}w_2(s)\|^2)ds+\varepsilon^{2N-4}\int_0^t\|G(s)\|^2\,\mathrm{d}s\}\\
\leq& C\varepsilon^{2N-5},
\end{split}
\end{equation}
Similarly, we could also have
\begin{equation}\label{a34}
\begin{split}
\int_0^t\|\pa_{11}w_j(s)\|^2\,\mathrm{d}s
\leq C \varepsilon^{2N-5},\;\;\;\text{for}\quad\! j=2,3.
\end{split}
\end{equation}
Plugging \eqref{a33} and \eqref{a34} into \eqref{a32}, we obtain that
\begin{equation}\label{a35}
\begin{split}
&\|\chi\pa_{11} w(t)\|^2+\varepsilon^2\sum_{j=1}^3\int_0^t\|\nabla(\chi\pa_{11} w_j(s))\|^2\,\mathrm{d}s\\
\leq& C (\int_0^t[\|\chi\pa_{11}w(s))\|^2+\|\pa_2(\chi\pa_1w_0(s))\|^2]\,\mathrm{d}s+\varepsilon^{2N-3}).
\end{split}
\end{equation}
Collecting the estimates \eqref{a31}, \eqref{a33}, \eqref{a34} and \eqref{a35},
we finally complete the proof of this lemma.
\end{proof}

\begin{lemma}\label{lemma6}
For any $ t\in[0,T]$, we have that
$$\|\pa_{12}w_0(t)\|^2\leq C\varepsilon^{2N-5}.$$
\end{lemma}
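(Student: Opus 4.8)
The plan is to produce a scalar transport equation for the mixed derivative $\pa_{12}w_0$ and to close an $L^2(\Omega)$ energy estimate for it, in the spirit of Lemma \ref{lemma4}. Since the continuity component \eqref{w0 equation} has no dissipation, $\pa_{12}w_0$ carries no coercivity of its own; as in Lemma \ref{lemma4}, the coercivity must be borrowed from the momentum equation through \eqref{c1}, which expresses $\pa_{11}w_1$ as $\tfrac{p'_\rho}{\varepsilon^2(1+C)}\pa_1 w_0$ plus terms of lower order in the normal direction. Concretely I would differentiate the $\pa_1 w_0$ equation \eqref{a24} once in $x_2$, obtaining
\[
\tfrac{1}{\rho'}\pa_t(\pa_{12}w_0)+\tfrac{\mathrm{u}'}{\rho'}\cdot\nabla(\pa_{12}w_0)+\pa_{112}w_1+\pa_{122}w_2=\varepsilon^N\pa_{12}G_0+R,
\]
where $R$ gathers commutators containing only $\pa_1 w_0$, $\nabla w_0$ and derivatives of the background fields.

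The decisive step is to remove the top order term $\pa_{112}w_1=\pa_2(\pa_{11}w_1)$ using $\pa_2$ of \eqref{c1}. This isolates the coercive contribution $\tfrac{p'_\rho}{\varepsilon^2(1+C)}\pa_{12}w_0$, together with the $\varepsilon^{-2}$-weighted quantities $\pa_{t2}w_1$, $\pa_2(\mathrm{u}'\cdot\nabla w_1)$, $\pa_{12}w_3$, $\pa_1 w_0$ and $\varepsilon^N\pa_2 G_1$, and with the unweighted third order term $\pa_{122}w_2$. I would then take the inner product with $\pa_{12}w_0$ and integrate in $t$. The convection term integrates by parts with no boundary contribution because $\mathrm{u}'_1=0$ on $\Gamma$, while the multiplication term furnishes the positive quantity $\tfrac{c}{\varepsilon^2}\int_0^t\|\pa_{12}w_0\|^2$. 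Splitting every remaining pairing by Young's inequality, the $\pa_{12}w_0$ self-interaction is absorbed into this coercive term, and the rest is reduced either to weighted source integrals $\tfrac{1}{\varepsilon^2}\int_0^t\|(\cdot)\|^2$ or to $\varepsilon^2\int_0^t\|(\cdot)\|^2$ of the third order term.

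It then remains to bound these source integrals. The key observation is that each full mixed derivative entering the weighted part — $\pa_{t2}w_1$, the derivatives $\pa_{12}w_1,\pa_{22}w_1$ coming from $\pa_2(\mathrm{u}'\cdot\nabla w_1)$, and $\pa_{12}w_3$ — is a component of $\nabla(\pa_t w_j)$ or $\nabla(\pa_2 w_j)$, i.e. a gradient of a tangential derivative, and is therefore controlled at the level $\int_0^t\|\cdot\|^2\le C\varepsilon^{2N-3}$ by Lemma \ref{lemma2}; crucially, no cutoff $\chi$ is required for these. Each such weighted term thus contributes $\varepsilon^{-2}\cdot\varepsilon^{2N-3}=\varepsilon^{2N-5}$, exactly the asserted order, while $\tfrac1{\varepsilon^2}\int_0^t\|\pa_1 w_0\|^2$ is supplied by Lemma \ref{lemma4}. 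The unweighted third order term $\pa_{122}w_2=\pa_1\pa_{22}w_2$ is a component of $\nabla\pa_{22}w_2$, whose $\varepsilon^2$-weighted time integral is bounded by $C\varepsilon^{2N-3}$ in Lemma \ref{lemma5}, and is therefore harmless. The main difficulty is thus not any single estimate but the clean extraction of the coercive term from $\pa_2$ of \eqref{c1} and the bookkeeping that places every top order term into one of these two categories with the correct power of $\varepsilon$; once this is arranged, Gronwall's inequality yields $\|\pa_{12}w_0(t)\|^2\le C\varepsilon^{2N-5}$.
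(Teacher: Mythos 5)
Your proposal is correct and follows essentially the same route as the paper: both differentiate the continuity equation to get an evolution equation for $\pa_{12}w_0$, substitute $\pa_2$ of \eqref{c1} for $\pa_{112}w_1$ to extract the coercive $\varepsilon^{-2}\|\pa_{12}w_0\|^2$ term, absorb the remaining pairings via Young's inequality into either $\varepsilon^{-2}$-weighted tangential-derivative integrals (Lemmas \ref{lemma2}--\ref{lemma4}) or $\varepsilon^{2}$-weighted third-order integrals (Lemma \ref{lemma5}), and close with Gronwall. The only cosmetic difference is that you bound $\pa_{12}w_1$ via $\nabla\pa_2 w_1$ from Lemma \ref{lemma2} without the cutoff $\chi$, whereas the paper uses $\|\chi\pa_{12}w_1\|$ from Lemma \ref{lemma5}; both give the same $\varepsilon^{2N-5}$.
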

\begin{proof}
Applying $\pa_{12}$ to \eqref{w0 equation}, taking the inner product in $L^2(\Omega)$
with $\pa_{12}w_0,$ one can show that
\begin{equation}\label{a36}
\begin{split}
\|\pa_{12}w_0(t)\|^2\leq& C \{
\int_0^t(\|\pa_{12}w_0(s)\|^2+\|\pa_{tx_1}w_0(s)\|^2\\&-\sum_{j=1}^2\langle\pa_{12}w_0,\pa_j(\pa_{12}w_j)\rangle(s))\,\mathrm{d}s
+\varepsilon^{2N-3}\}
\end{split}
\end{equation}
Applying $\pa_1$ to \eqref{w0 equation}, one could show that
\begin{equation}\label{a37}
\int_0^t\|\pa_{tx_1}w_0(s)\|^2\,\mathrm{d}s \leq C(\int_0^t\|\pa_{12}w_0(s)\|^2ds+\varepsilon^{2N-5}),
\end{equation}
Furthermore, by \eqref{c1} we have that
\begin{equation}\label{a38}
\begin{split}
&-\langle\pa_{12}w_0,\pa_1(\pa_{12}w_1)\rangle=-\langle\pa_{12}w_0,\pa_2(\pa_{11}w_1)\rangle\\
\leq&-\tfrac{C}{(1+C)\varepsilon^2}\|\pa_{12}w_0\|^2+\tfrac{C}{\varepsilon^2}
[\|\chi\pa_{12}w_1\|^2+\|\pa_{22}w_1\|^2+\|D^{\mathrm{tan}}w_1\|^2\\&+\varepsilon^{2N}\|\pa_2G_1\|^2+
\varepsilon^4\|\pa_{x_2}^3w_1\|^2+\varepsilon^4\|\pa_{x_1}\pa_{x_2}^2w_2\|^2]\,.
\end{split}
\end{equation}
Thus by Lemma \ref{lemma3}, Lemma \ref{lemma4} and Lemma \ref{lemma5} we have
\begin{equation}\label{a39}
\begin{split}
&-\int^t_0\langle\pa_{12}w_0,\pa_1(\pa_{12}w_1)\rangle(s)\,\mathrm{d}s\\
\leq&-\frac{C}{(1+C)\varepsilon^2}\int_0^t\|\pa_{12}w_0(s)\|^2\,\mathrm{d}s+C\varepsilon^{2N-5},
\end{split}
\end{equation}
we also have that
\begin{equation}\label{a40}
\begin{split}
&-\int^t_0\langle\pa_{12}w_0,\pa_1(\pa_{22}w_2)\rangle(s)ds\\
\leq& \tfrac{C}{2(1+C)\varepsilon^2}\int_0^t\|\pa_{12}w_0(s)\|^2\,\mathrm{d}s+C\varepsilon^2\int_0^t\|\pa_1(\pa_{22}w_2(s))\|^2\,\mathrm{d}s\\
\leq&\tfrac{C}{2(1+C)\varepsilon^2}\int_0^t\|\pa_{12}w_0(s)\|^2\,\mathrm{d}s+C\varepsilon^{2N-3}\,.
\end{split}
\end{equation}
Collecting the estimates  \eqref{a36}-\eqref{a40}, we have that
\begin{equation}\label{a41}
\begin{split}
\|\pa_{12}w_0(t)\|^2\leq& C\{
\int_0^t\|\pa_{12}w_0(s)\|^2\,\mathrm{d}s
+\varepsilon^{2N-5}\},
\end{split}
\end{equation}
and finally complete the proof of this lemma.
\end{proof}

\begin{lemma}\label{lemma7}
For any $ t\in[0,T]$, we have that
$$\|\pa_{tx_2}w(t)\|^2+\|\pa_{tt}w(t)\|^2\leq C \varepsilon^{2N-3}.$$
\end{lemma}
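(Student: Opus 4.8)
The plan is to obtain both bounds by tangential energy estimates: I would apply the second-order tangential operators $\pa_t\pa_2$ and $\pa_t^2$ to the system \eqref{w0 equation}--\eqref{w3 equation} and pair the resulting equations in $L^2(\Omega)$ with $\pa_{tx_2}w$ and $\pa_{tt}w$ respectively, then close each estimate by Gronwall's inequality. Since neither operator contains $\pa_1$, the homogeneous boundary conditions in \eqref{w BC} are preserved, i.e. $\pa_{tx_2}w_j=\pa_{tt}w_j=0$ on $\Gamma\times[0,T]$ for $j=1,2,3$. I would treat $\pa_t\pa_2$ first and $\pa_t^2$ afterward, because the commutators generated by $\pa_t^2$ involve $\pa_{tx_2}w_0$, which the first estimate controls.

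For the hyperbolic part the symmetry of $A_0(V'),A_1(V'),A_2(V')$ makes the estimate close as in Lemma \ref{lemma2}: after integrating by parts, the first-order terms produce only a boundary integral $-\tfrac{1}{2}\int_\Gamma A_1(V')\,(D^2 w)\cdot(D^2 w)\,\mathrm{d}x_2$, where $D^2$ denotes $\pa_t\pa_2$ or $\pa_t^2$. On $\Gamma$ only the zeroth component $D^2 w_0$ survives, so this integral reduces to the $(0,0)$-entry $\tfrac{\mathrm{u}_1'}{\rho'}$ of $A_1(V')$ times $|D^2 w_0|^2$; since $\mathrm{u}_1'=0$ on $\Gamma$ (as used in \eqref{b1}), the boundary term vanishes. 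Moreover the dangerous normal derivative $\pa_1 D^2 w_0$ arising from the continuity--momentum coupling need never be estimated: the contributions $\langle\pa_1 D^2 w_1,D^2 w_0\rangle$ and $\langle\pa_1 D^2 w_0,D^2 w_1\rangle$ sum to $-\int_\Gamma D^2 w_1\,D^2 w_0\,\mathrm{d}x_2=0$, and the analogous $\pa_2$-coupling cancels after integrating over $x_2\in\mathbb{R}$. The dissipative terms for $w_1,w_2,w_3$ supply $\eps^2\sum_{j=1}^3\|\nabla D^2 w_j\|^2$ on the left exactly as in \eqref{w energy}.

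The remaining work is to bound the commutator terms in which tangential derivatives fall on the coefficients $\rho',\mathrm{u}',p'_\rho,p'_\theta,\beta'$. Every such term carrying a normal derivative of the non-dissipative component $w_0$ --- for instance $\pa_t(\tfrac{\mathrm{u}_1'}{\rho'})\pa_{tx_1}w_0$ in the $\pa_t^2$ estimate, or $\pa_t(\tfrac{\mathrm{u}_1'}{\rho'})\pa_{12}w_0$ in the $\pa_t\pa_2$ estimate --- is multiplied by a coefficient that vanishes on $\Gamma$ together with $\mathrm{u}_1'$; writing that coefficient as $\chi(x_1)$ times a bounded factor converts the term into a $\chi$-weighted quantity such as $\|\pa_t(\chi\pa_1 w_0)\|^2$ or $\|\chi\pa_{12}w_0\|^2$, both bounded by $C\eps^{2N-3}$ through Lemma \ref{lemma5} and \eqref{a28}. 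All other commutators involve only quantities already estimated in Lemmas \ref{lemma1}--\ref{lemma6}, and the viscous commutators carry an extra factor $\eps^2$; collecting everything and invoking Gronwall then yields $\|\pa_{tx_2}w\|^2,\|\pa_{tt}w\|^2\leq C\eps^{2N-3}$.

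I would single out the $\eps$-power bookkeeping for the non-dissipative component $w_0$ as the main obstacle: because the continuity equation has no $\eps^2\Delta$ smoothing, the derivatives $\pa_{tx_2}w_0$ and $\pa_{tt}w_0$ cannot be absorbed by dissipation and must instead be reduced, via the relation \eqref{c1} that trades $\pa_{11}w_1$ for tangential derivatives and $\pa_1 w_0$, to the $\chi$-weighted quantities of Lemma \ref{lemma5}. Verifying that the $\eps^{-2}$ loss per tangential derivative combines with this trade to give precisely $\eps^{2N-3}$, rather than the weaker $\eps^{2N-5}$ appearing in intermediate estimates such as Lemma \ref{lemma6}, is the delicate point; the saving comes entirely from the vanishing of $\mathrm{u}_1'$ on $\Gamma$, which replaces every unweighted normal $w_0$-derivative by a $\chi$-weighted one.
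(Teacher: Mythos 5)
Your proposal is correct and follows essentially the same route as the paper: the paper's proof likewise applies $\pa_{tx_2}$ and $\pa_{tt}$ to \eqref{w0 equation}--\eqref{w3 equation}, takes inner products with $\pa_{tx_2}w$ and $\pa_{tt}w$, obtains two coupled inequalities each with right-hand side $C(\int_0^t(\|\pa_{tx_2}w\|^2+\|\pa_{tt}w\|^2)\,\mathrm{d}s+\varepsilon^{2N-3})$, and closes by combining them with Gronwall. Your additional detail on the vanishing of boundary terms via $\mathrm{u}_1'=0$ on $\Gamma$ and the $\chi$-weighted treatment of normal derivatives of $w_0$ is consistent with the mechanisms the paper uses in Lemmas \ref{lemma2}--\ref{lemma6}.
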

\begin{proof}
Applying $\pa_{tx_2}$ to \eqref{w0 equation}-\eqref{w3 equation}, taking the inner product in $L^2(\Omega)$
with $\pa_{tx_2}w,$ one can show that
\begin{equation}\label{a41}
\begin{split}
&\|\pa_{tx_2}w(t)\|^2+\varepsilon^2\sum_{j=1}^3\int^t_0\|\nabla\pa_{tx_2}w_j(s)\|^2\,\mathrm{d}s\\
\leq&C (\int^t_0(\|\pa_{tx_2}w(s)\|^2+\|\pa_{tt}w(s)\|^2)\,\mathrm{d}s+\varepsilon^{2N-3})
\end{split}
\end{equation}
Similarly, we have that
\begin{equation}\label{a42}
\begin{split}
&\|\pa_{tt}w(t)\|^2+\varepsilon^2\sum_{j=1}^3\int^t_0\|\nabla\pa_{tt}w_j(s)\|^2\,\mathrm{d}s\\
\leq&C (\int^t_0(\|\pa_{tx_2}w(s)\|^2+\|\pa_{tt}w(s)\|^2)\,\mathrm{d}s+\varepsilon^{2N-3})\,.
\end{split}
\end{equation}
Combining the above two inequality, we proved the lemma .
\end{proof}

\begin{lemma}\label{lemma8}
For any $ t\in[0,T]$, we have that
$$\sum_{j=1}^3\|\nabla\pa_{2}w_j(t)\|^2\leq C \varepsilon^{2N-4}.$$
\end{lemma}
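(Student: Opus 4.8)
The plan is to treat this as a higher-order analogue of Lemma \ref{lemma3}, applied to the tangentially differentiated quantity $v:=\pa_2 w=(\pa_2 w_0,\pa_2 w_1,\pa_2 w_2,\pa_2 w_3)^\top$. First I would reduce the claim: since $\|\nabla\pa_2 w_j\|^2=\|\pa_{12}w_j\|^2+\|\pa_{22}w_j\|^2$ and Lemma \ref{lemma5} already gives $\|\pa_{22}w_j(t)\|^2\le C\varepsilon^{2N-3}\le C\varepsilon^{2N-4}$ for $0<\varepsilon<1$, it suffices to prove $\sum_{j=1}^3\|\pa_{12}w_j(t)\|^2=\sum_{j=1}^3\|\pa_1 v_j(t)\|^2\le C\varepsilon^{2N-4}$. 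Note that Lemma \ref{lemma5} only controls the cut-off quantity $\|\chi\pa_{12}w\|^2$, so the genuine point of this lemma is to remove the cut-off near $\Gamma$.

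Next I would apply $\pa_2$ to \eqref{w0 equation}--\eqref{w3 equation}. Because $\pa_2$ is tangential to $\Gamma$ and $w_1=w_2=w_3=0$ on $\Gamma\times[0,T]$ by \eqref{w BC}, we have the crucial boundary relations $v_1=v_2=v_3=0$ on $\Gamma\times[0,T]$. The system for $v$ has exactly the same symmetric-hyperbolic-plus-viscous principal structure as \eqref{w0 equation}--\eqref{w3 equation}, with right-hand side $\varepsilon^N\pa_2 G$ together with commutator terms of the form $[\pa_2,\text{coeff}]$ acting on first derivatives of $w$ (e.g.\ $\pa_2(\tfrac{\rho'}{p'_\rho})\pa_t w_j$, $\pa_2(\tfrac{p'_\theta}{p'_\rho})\pa_1 w_3$, $\pa_2(\tfrac{u'}{\rho'})\cdot\nabla w_0$) and on the $\varepsilon^2$-weighted second derivatives of $w$.

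I would then run the energy estimate for $v$ exactly as for \eqref{w energy}: take the inner product with $v$ and integrate by parts. The symmetric hyperbolic coupling terms combine in pairs into boundary integrals over $\Gamma$, e.g.\ $\langle\pa_1 v_0,v_1\rangle+\langle\pa_1 v_1,v_0\rangle=-\int_\Gamma v_0 v_1\,d\sigma$, which vanish since $v_1=v_2=v_3=0$ on $\Gamma$; this is the delicate step, since it lets the a~priori uncontrolled quantity $\pa_1 v_0=\pa_{12}w_0$ drop out and no Dirichlet datum for the density is needed. The viscous terms produce $\varepsilon^2\sum_{j=1}^3\|\nabla v_j\|^2$ with vanishing boundary contributions, and the $\varepsilon^2$ second-derivative commutators, after one integration by parts (again with no boundary term because $v_j=0$ on $\Gamma$), contribute $\delta\varepsilon^2\|\nabla v_j\|^2$ (absorbed) plus $C\varepsilon^2\|\nabla w_j\|^2\le C\varepsilon^{2N}$. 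This yields $\tfrac12\tfrac{d}{dt}\|v\|^2_{V'}+\varepsilon^2\sum_{j=1}^3\|\nabla v_j\|^2\le C(\|v\|^2+\Phi(t)+\varepsilon^{2N}\|\pa_2 G\|^2)$, where $\Phi$ collects the commutator norms. Rearranging in the style of Lemma \ref{lemma3} gives, at a fixed time $t$, $\varepsilon^2\sum_{j=1}^3\|\pa_1 v_j(t)\|^2\le |\tfrac12\tfrac{d}{dt}\|v\|^2_{V'}|+C(\|v\|^2+\Phi+\varepsilon^{2N}\|\pa_2 G\|^2)$, and I bound the time-derivative at fixed $t$ by $\varepsilon\|\pa_t v\|^2+\tfrac1\varepsilon\|v\|^2$.

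Finally I would insert the available bounds. From Lemma \ref{lemma2}, $\|v\|^2=\|\pa_2 w\|^2\le C\varepsilon^{2N-1}$, so $\tfrac1\varepsilon\|v\|^2\le C\varepsilon^{2N-2}$; from Lemma \ref{lemma7}, $\|\pa_t v\|^2=\|\pa_{tx_2}w\|^2\le C\varepsilon^{2N-3}$, so $\varepsilon\|\pa_t v\|^2\le C\varepsilon^{2N-2}$; and $\varepsilon^{2N}\|\pa_2 G\|^2\le C\varepsilon^{2N+1}$ since $\pa_2$ is tangential and does not produce a factor $\varepsilon^{-1}$. The commutator norm $\Phi$ is controlled pointwise in $t$ by Lemmas \ref{lemma2}, \ref{lemma3} and \ref{lemma4}; the most delicate contribution, coming from $\pa_2(\tfrac{u'}{\rho'})\pa_1 w_0$ paired with $v_0$, is handled by Cauchy--Schwarz as $\|\pa_1 w_0\|\,\|\pa_2 w_0\|\le (C\varepsilon^{2N-3})^{1/2}(C\varepsilon^{2N-1})^{1/2}=C\varepsilon^{2N-2}$ rather than the cruder $\varepsilon^{2N-3}$, which is what makes the exponent come out correctly. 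Thus every term on the right is $\le C\varepsilon^{2N-2}$, so $\varepsilon^2\sum_{j=1}^3\|\pa_1 v_j\|^2\le C\varepsilon^{2N-2}$, i.e.\ $\sum_{j=1}^3\|\pa_{12}w_j(t)\|^2\le C\varepsilon^{2N-4}$. Combined with the $\pa_{22}$ bound from Lemma \ref{lemma5} this gives the assertion. The main obstacle I anticipate is precisely the density coupling: because the density component carries no dissipation, $\pa_{12}w_0$ cannot be estimated directly and the proof must lean entirely on the exact boundary cancellation of the symmetric coupling terms, while simultaneously keeping the $\varepsilon^2$-weighted second-derivative commutators lower order through integration by parts.
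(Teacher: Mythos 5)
Your proposal is correct and follows essentially the same route as the paper: apply $\pa_2$ to \eqref{w0 equation}--\eqref{w3 equation}, run the energy estimate to get $\tfrac12\tfrac{\mathrm{d}}{\mathrm{d}t}\|\pa_2 w\|^2+\varepsilon^2\sum_{j=1}^3\|\nabla\pa_2 w_j\|^2\leq C\varepsilon^{2N-2}$ pointwise in $t$, then absorb the time derivative via $|\tfrac{\mathrm{d}}{\mathrm{d}t}\|\pa_2 w\|^2|\leq \tfrac1\varepsilon\|\pa_2 w\|^2+\varepsilon\|\pa_{tx_2}w\|^2\leq C\varepsilon^{2N-2}$ using Lemmas \ref{lemma2} and \ref{lemma7}, and divide by $\varepsilon^2$. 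The only cosmetic difference is that you split off the $\pa_{22}$ part via Lemma \ref{lemma5} first, while the paper keeps $\|\nabla\pa_2 w_j\|^2$ intact; your more explicit accounting of the commutator and boundary terms fills in what the paper summarizes as ``by the above lemmas.''
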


\begin{proof}
Applying $\pa_{2}$ to \eqref{w0 equation}-\eqref{w3 equation}, taking the inner product in $L^2(\Omega)$
with $\pa_{2}w,$ by the above lemmas, one can show that
\begin{equation}\label{a43}
\begin{split}
\tfrac{1}{2}\tfrac{\mathrm{d}}{\mathrm{d}t}\|\pa_{2}w(t)\|^2+\varepsilon^2\sum_{j=1}^3\|\nabla\pa_{2}w_j(t)\|^2
\leq C \varepsilon^{2N-2}\,.
\end{split}
\end{equation}
Thus
\begin{equation}\label{a43}
\begin{split}
&\varepsilon^2\sum_{j=1}^3\|\nabla\pa_{2}w_j(t)\|^2\\
\leq &-\tfrac{1}{2}\tfrac{\mathrm{d}}{\mathrm{d}t}\|\pa_{2}w(t)\|^2+C\varepsilon^{2N-2}\\
\leq &\sum_{j=0}^3\|\pa_{2}w(t)\|\|\pa_{tx_2}w(t)\|+C_{44}\varepsilon^{2N-2}\\
\leq &\tfrac{1}{\varepsilon}\|\pa_{2}w(t)\|^2+\varepsilon\|\pa_{tx_2}w(t)\|^2+C \varepsilon^{2N-2}\\
\leq &C\varepsilon^{2N-2}
\end{split}
\end{equation}
\end{proof}

To get the $L^{\infty}$ norm estimate of $w$, we need the following
proposition which has been also used in \cite{xy}:
\begin{proposition}
Let $f(x_1,x_2)\in H^1(\Omega)$, and $\pa_{12}f\in L^2(\Omega).$ Then we have
$$\|f(x_1,x_2)\|_{L^{\infty}(\Omega)}\leq 2\|f\|^{\frac{1}{4}}\|\pa_1f\|^{\frac{1}{4}}\|\pa_2f\|^{\frac{1}{4}}\|\pa_{12}f\|^{\frac{1}{4}}.$$
\end{proposition}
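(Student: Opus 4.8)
The plan is to reduce the two–dimensional bound to two successive applications of the elementary one–dimensional Agmon inequality. Recall that for a function $g$ on the half–line $(0,\infty)$ that decays at infinity one has, by the fundamental theorem of calculus and the Cauchy–Schwarz inequality,
\[
\sup_{x>0} g(x)^2 = \sup_{x>0}\Big(-2\int_x^\infty g(s)g'(s)\,\mathrm{d}s\Big) \le 2\Big(\int_0^\infty g^2\,\mathrm{d}s\Big)^{1/2}\Big(\int_0^\infty (g')^2\,\mathrm{d}s\Big)^{1/2}.
\]
First I would record the analogous statement with integration over $\RR$ in place of $(0,\infty)$ (decay then being at $\pm\infty$); both follow from the same one–line computation. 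Since $f\in H^1(\Om)$ with $\pa_{12}f\in L^2(\Om)$, for almost every $x_2$ the slice $f(\cdot,x_2)$ lies in $H^1(0,\infty)$ and is therefore absolutely continuous with vanishing limit at $+\infty$, which legitimizes the boundary terms; to be fully rigorous one first proves the inequality for $f\in C_c^\infty(\overline{\Om})$ and then passes to the limit by density.

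Applying the half–line inequality in the $x_1$ variable, for each fixed $x_2$,
\[
\sup_{x_1>0} f(x_1,x_2)^2 \le 2\,\alpha(x_2)\,\beta(x_2),\qquad \alpha(x_2):=\|f(\cdot,x_2)\|_{L^2_{x_1}},\ \ \beta(x_2):=\|\pa_1 f(\cdot,x_2)\|_{L^2_{x_1}},
\]
so that $\|f\|_{L^\infty(\Om)}^2\le 2\,(\sup_{x_2}\alpha)(\sup_{x_2}\beta)$. The second step is to bound $\sup_{x_2}\alpha^2$ and $\sup_{x_2}\beta^2$ by applying the one–dimensional inequality on $\RR$ in the $x_2$ variable to the nonnegative functions $x_2\mapsto\alpha(x_2)^2=\int_0^\infty f^2\,\mathrm{d}x_1$ and $x_2\mapsto\beta(x_2)^2=\int_0^\infty (\pa_1 f)^2\,\mathrm{d}x_1$. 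The crucial point is that differentiating under the integral sign gives $\tfrac{\mathrm{d}}{\mathrm{d}x_2}\alpha^2=2\int_0^\infty f\,\pa_2 f\,\mathrm{d}x_1$ and $\tfrac{\mathrm{d}}{\mathrm{d}x_2}\beta^2=2\int_0^\infty \pa_1 f\,\pa_{12} f\,\mathrm{d}x_1$, after which a single Cauchy–Schwarz over all of $\Om$ yields
\[
\sup_{x_2}\alpha^2\le 2\|f\|\,\|\pa_2 f\|,\qquad \sup_{x_2}\beta^2\le 2\|\pa_1 f\|\,\|\pa_{12}f\|.
\]

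Combining the two steps,
\[
\|f\|_{L^\infty(\Om)}^2 \le 2\,(\sup_{x_2}\alpha)(\sup_{x_2}\beta) \le 2\,\big(2\|f\|\,\|\pa_2 f\|\big)^{1/2}\big(2\|\pa_1 f\|\,\|\pa_{12}f\|\big)^{1/2} = 4\,\|f\|^{1/2}\|\pa_1 f\|^{1/2}\|\pa_2 f\|^{1/2}\|\pa_{12}f\|^{1/2},
\]
and taking square roots gives the asserted estimate with constant $2$. I expect the only genuinely delicate point to be the justification of the boundary terms and of differentiation under the integral sign (equivalently, the absolute continuity of $\alpha^2$ and $\beta^2$), which is handled by the density argument above; note that integrating from $x_1$ to $+\infty$ rather than from $0$ avoids any contribution from the boundary $x_1=0$, so that no boundary condition on $f$ at $\Gamma$ is needed.
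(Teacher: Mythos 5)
Your proof is correct, and the constant $2$ comes out exactly as claimed; the paper itself states this proposition without proof, citing \cite{xy}, and your argument — the one–dimensional Agmon bound $\sup g^2\le 2\|g\|\,\|g'\|$ applied first in $x_1$ slice by slice and then to $\alpha^2=\int_0^\infty f^2\,\mathrm{d}x_1$ and $\beta^2=\int_0^\infty(\pa_1 f)^2\,\mathrm{d}x_1$ in $x_2$ — is precisely the standard derivation relied upon there. The points you flag as delicate (density of smooth compactly supported functions in $\{f\in H^1:\pa_{12}f\in L^2\}$, absolute continuity of $\alpha^2$ and $\beta^2$) are handled correctly, and you rightly note that only $f$, $\pa_1 f$, $\pa_2 f$, $\pa_{12}f$ are used, consistent with the hypotheses.
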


Based on the above estimates and this proposition, we established proposition \ref{proposition1}.

\section{Linear System of Prandtl-type Equations}
In this section, we collect results on the existence on the initial boundary problems for linearized Navier-Stokes-Fourier equations,  linearized Euler equations and the linear system of Prandtl-type equations which are frequently used in the previous sections.

First, for the problem \eqref{linearized NSF} of the linearized Navier-Stokes equations
of a compressible viscous fluid for fixed $\varepsilon$, one can show the following
result by the similar argument in \cite{mn} with suitable modifications. It was also stated in \cite{xy} (see Proposition 1.3 in \cite{xy}).
\begin{proposition}\label{lcns}
Let $\varepsilon>0$ and let $m>2$ be an integer. Assume that $V_0\in H^m(\Omega)$
satisfies the compatibility condition of order $[m/2]-1$ for initial boundary problem \eqref{linearized NSF}. Then
there exist a unique solution $V$ of \eqref{linearized NSF} satisfying
$$V\in \bigcap^{[m/2]}_{j=0}C^j([0,T];H^{m-2j}(\Omega))\,.$$
\end{proposition}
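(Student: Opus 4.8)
The plan is to treat \eqref{linearized NSF} as a linear symmetric hyperbolic--parabolic system with the parameter $\eps>0$ held fixed, and to reproduce, with the modifications forced by the temperature equation, the energy scheme of \cite{mn}. The structural facts I would exploit are that $A_0(V')$ is symmetric positive definite, that the $A_j(V')$ are symmetric, and that $D_\eps$ is, up to lower order terms, $\eps^2$ times a second order operator that is elliptic on the $(v_1,v_2,\theta)$ block and identically zero on the $\rho$ block; the coupling between $\rho$, $\theta$ and the velocity is only through first and zeroth order terms. The boundary operator $M^+$ imposes the homogeneous Dirichlet conditions $v_1=v_2=\theta=0$ on $\Gamma$, with no condition on $\rho$, as is appropriate for the first order nature of its evolution. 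Since $\eps$ is fixed, the relevant scaling is parabolic: one time derivative costs two spatial derivatives, which is exactly the anisotropy encoded in the target space $\bigcap_{j=0}^{[m/2]}C^j([0,T];H^{m-2j}(\Om))$.

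First I would establish the a priori estimates. The base estimate comes from pairing the system with $V$ in $L^2(\Om)$: using the symmetry of $A_0,A_j$ and integrating the viscous block by parts (the Dirichlet conditions kill the boundary terms) yields
\[
\tfrac{1}{2}\tfrac{d}{dt}(A_0(V')V,V)_{0,\Om}+\eps^2 c\,\|\nabla(v_1,v_2,\theta)\|_{0,\Om}^2\le C\big(\|V\|_{0,\Om}^2+\|\text{data}\|_{0,\Om}^2\big),
\]
and Gronwall closes it. Higher estimates are obtained by differentiating the equation $k$ times in $t$ and pairing $\pa_t^k V$ with itself; here the inductively defined $n$-Cauchy data \eqref{inductive compatbility} and the compatibility condition \eqref{compatibility C} of order $[m/2]-1$ are precisely what guarantees that $\pa_t^k V$ satisfies the homogeneous Dirichlet condition on $\Gamma$ and admits legitimate traces at $t=0$, so the boundary terms again vanish. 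Tangential spatial derivatives are controlled directly by the same pairing, while the normal ($x_1$) derivatives, for which the Dirichlet condition is lost, must be recovered algebraically from the equations: the elliptic part of the momentum and energy equations expresses $\eps^2\pa_1^2(v_1,v_2,\theta)$ in terms of $\pa_t$ and tangential derivatives, and the momentum equation expresses $\pa_1\rho$ in terms of the already controlled quantities. Iterating this trade of one $\pa_t$ for two spatial derivatives produces exactly the norm $\sum_j\|\pa_t^j V\|_{m-2j,\Om}$ defining the target space.

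With the uniform bounds in hand, existence follows from a standard approximation-and-compactness argument: one may regularize by a Friedrichs mollifier or a Galerkin scheme, or run a fixed-point iteration that at each step decouples the problem into the parabolic Dirichlet system for $(v_1,v_2,\theta)$ and the transport equation for $\rho$; the uniform estimates furnish weak-$*$ limits solving \eqref{linearized NSF}, and the parabolic smoothing yields the stated time continuity. Uniqueness is immediate from the base energy estimate applied to the difference of two solutions, which satisfies the homogeneous system with zero initial and boundary data, whence Gronwall forces it to vanish.

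I expect the main obstacle to be the interplay between the nondiffusive density component and the loss of the boundary condition under normal differentiation. Because $\rho$ solves only a transport equation, its regularity must be propagated through the coupling rather than gained from smoothing, and every normal-derivative estimate has to be extracted from the equations themselves rather than from an energy identity with a favorable boundary term. Controlling the weights $\eps^{-2}$ that appear when solving for $\pa_1^2(v_1,v_2,\theta)$ (harmless since $\eps$ is fixed, but bookkeeping-heavy) is the delicate technical point, and it is essentially the finite-$\eps$ counterpart of the normal-derivative analysis carried out in Lemmas \ref{lemma3}--\ref{lemma6} for the error term.
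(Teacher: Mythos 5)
Your proposal is correct and is essentially the argument the paper itself invokes: the paper gives no proof of Proposition \ref{lcns} beyond citing the energy-method construction of Matsumura--Nishida \cite{mn} (as adapted in \cite{xy}), and your sketch --- symmetric positive-definite $A_0$, $L^2$ energy identity with the Dirichlet conditions killing the boundary terms, time-differentiated estimates justified by the compatibility conditions \eqref{inductive compatbility}--\eqref{compatibility C}, recovery of normal derivatives of $(v_1,v_2,\theta)$ from the elliptic block and of $\pa_1\rho$ from the momentum equation, followed by Galerkin/mollification and Gronwall for uniqueness --- is precisely that scheme with the parabolic count $j$ time derivatives $=2j$ space derivatives matching the target space. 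No substantive discrepancy with the paper's (cited) route.
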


We now state some result of the following linearized Euler equations with inhomogeneous source term
\begin{equation}\label{equation E}
\begin{aligned}
\mathcal{A}_0 \pa_t E - \mathcal{L} E=F\,,\quad &\text{in}\quad\!\Omega\times[0,T]\,,\\
\mathcal{M}^0 E =0\,, \quad &\text{on}\quad\!\Gamma\times[0,T]\,,\\
E(x,0)=f(x)\,,\quad &\text{for}\quad x\in\Omega\,.
\end{aligned}
\end{equation}
As explained in \cite{xy}, by modifying the argument in \cite{s},
we have the following existence of the initial value problem \eqref{equation E}:
\begin{proposition}\label{existence linarized Euler}
Let $m$ be an integer, and assume that $f(x)\in H^m(\Omega)$ and $F(x,t)\in H^m(\Omega\times [0,T])$
satisfy the compatibility condition of $m-1$. Then there exists a unique solution to \eqref{equation E}
$$E\in\bigcap^{m}_{j=0}C^j([0,T];H^{m-j}(\Omega))$$
\end{proposition}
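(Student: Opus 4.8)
The plan is to regard \eqref{equation E} as a linear Friedrichs-symmetric hyperbolic initial-boundary value problem with a uniformly characteristic boundary, and to invoke the existence theory for such systems from \cite{s}, adapted as in \cite{xy} to exploit the acoustic structure. First I would record the structural facts. Since $Q$ is orthogonal and $A_0$ is diagonal with positive entries, $\mathcal{A}_0=QA_0Q^{\top}$ is symmetric positive definite, while $\mathcal{A}_1=QA_1Q^{\top}$ and $\mathcal{A}_2=QA_2Q^{\top}$ are symmetric; the equation reads $\mathcal{A}_0\partial_t E+\mathcal{A}_1\partial_1 E+\mathcal{A}_2\partial_2 E+\mathcal{W}E=F$. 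On $\Gamma$ the boundary matrix reduces to $\mathcal{A}_{1m}$ of \eqref{A-1m}, whose eigenvalues are $0,0,\sqrt{\alpha^2+1},-\sqrt{\alpha^2+1}$; hence the boundary is characteristic of constant multiplicity $2$, with exactly one incoming and one outgoing noncharacteristic mode. The single scalar condition $\mathcal{M}^0 E=0$, i.e. $E_2-E_3=0$, therefore matches the number of incoming characteristics, and the boundary flux $\langle\mathcal{A}_1E,E\rangle|_\Gamma=\sqrt{\alpha^2+1}\,(E_2^2-E_3^2)$ vanishes on $\ker\mathcal{M}^0$, so the boundary condition is maximally nonnegative (in fact conservative). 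These are precisely the hypotheses under which \cite{s} applies.

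Next I would establish the basic $L^2$ a priori estimate and uniqueness. Pairing the equation with $E$ in $L^2(\Omega)$ and using the symmetry of the $\mathcal{A}_j$ to integrate by parts, the only boundary contribution is proportional to $\int_\Gamma\langle\mathcal{A}_1E,E\rangle|_\Gamma\,\mathrm{d}x_2$, which vanishes by the dissipativity just noted; the lower-order terms involving $\partial_t\mathcal{A}_0$, the divergence of the $\mathcal{A}_j$, and $\mathcal{W}$ contribute only $C\|E\|^2$. This yields $\tfrac{\mathrm{d}}{\mathrm{d}t}\langle\mathcal{A}_0 E,E\rangle\le C(\|E\|^2+\|F\|^2)$, and Gronwall's inequality gives both uniqueness and the estimate $\sup_{[0,T]}\|E(t)\|^2\le C(\|f\|^2+\int_0^T\|F\|^2\,\mathrm{d}s)$. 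Existence of a solution in $L^\infty([0,T];L^2(\Omega))$ then follows from the standard construction for symmetric hyperbolic systems with maximally dissipative boundary conditions (duality combined with the a priori estimate for the adjoint problem, or vanishing-viscosity regularization).

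Finally I would bootstrap to the claimed regularity. The compatibility condition of order $m-1$ ensures that the formal time derivatives $\dot\partial_t^n E(x,0)$, defined inductively from the equation, lie in $H^{m-n}(\Omega)$ and satisfy $\mathcal{M}^0\dot\partial_t^n E(\cdot,0)=0$, which removes the corner singularity at $\{t=0\}\cap\Gamma$. The tangential derivatives $\partial_t$ and $\partial_{x_2}$ commute with both the symmetric structure and the boundary condition, so the energy argument above applies to $\partial_t^k\partial_{x_2}^l E$ and produces tangential estimates. The main obstacle, and the genuinely delicate point, is the normal derivative: because $\mathcal{A}_1|_\Gamma$ is singular, one cannot control $\partial_{x_1}$ of the characteristic components by energy methods alone, and the raw theory of \cite{s} yields only anisotropic (conormal) regularity with a loss of normal derivatives. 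Here I would exploit the acoustic structure exactly as in \cite{xy}: the noncharacteristic components $E_2,E_3$ have their normal derivatives recovered algebraically from the equation in terms of tangential derivatives, since their boundary block $\pm\sqrt{\alpha^2+1}$ is invertible, after which the normal derivatives of the two characteristic components are obtained inductively from the transport-type equations they satisfy. Iterating this trade of one normal derivative for tangential derivatives upgrades the conormal estimate to the full isotropic regularity, giving $E\in\bigcap_{j=0}^m C^j([0,T];H^{m-j}(\Omega))$.
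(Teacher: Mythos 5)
Your proposal is correct and follows exactly the route the paper itself takes: the paper offers no proof beyond invoking Schochet's theory for symmetric hyperbolic systems with a uniformly characteristic, maximally dissipative boundary, as adapted in \cite{xy}, and your sketch (symmetry of the $\mathcal{A}_j$, vanishing of the boundary flux $\sqrt{\alpha^2+1}\,(E_2^2-E_3^2)$ on $\ker\mathcal{M}^0$, tangential energy estimates under the order-$(m-1)$ compatibility conditions, and algebraic/transport recovery of normal derivatives to avoid the usual conormal loss) is precisely the content of that cited argument. No gaps; you have correctly identified the recovery of normal derivatives of the characteristic components as the one genuinely delicate step.
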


In the rest of this section, we shall prove the property of the linear system of the Prandtl-type equations. The notation
of of the following part of this section is different from that of the other sections. The initial boundary value problem
of linear system of the Prandtl-type equations can be written as
\begin{equation}\label{1000}
\begin{split}
&a_1(0,z_2,t)\pa_t u_1+b_{11}(0,z_2,t)z_1\pa_{z_1} u_1+b_{12}(0,z_2,t) z_1\pa_{z_1}u_2\\+&c_{11}(0,z_2,t)u_1
+c_{12}(0,z_2,t)u_2
-\pa_{z_1}^2 u_1=f_1(z_1,z_2,t), \\
&a_2(0,z_2,t)\pa_t u_2+b_{21}(0,z_2,t)z_1\pa_{z_1} u_1+b_{22}(0,z_2,t) z_1\pa_{z_1}u_2\\+&c_{21}(0,z_2,t)u_1
+c_{22}(0,z_2,t)u_2-\pa_{z_1}^2 u_2=f_2(z_1,z_2,t),
\end{split}
\end{equation}
for $(z_1,z_2,t)\in \Omega\times[0,T].$
The boundary condition and initial condition are
\begin{equation}\label{1001}
\begin{split}
u_1=u_2=0\,,\quad \text{on}\quad\!\Gamma\times[0,T],\\
(u_1,u_2)(z_1,z_2,0)=(0,0)\,,\quad \text{for}\quad\! z\in\Omega\,,
\end{split}
\end{equation}
where $\Omega=\mathbb{R}^2_+=\{z=(z_1,z_2)\in\mathbb{R}^2:z_1\geq0\},\,\Gamma=\pa\Omega.$
It is assumed that
\begin{equation}\label{1002}
a_i,b_{ij},c_{ij}\in C^{\infty}(\Gamma\times[0,T]),
\end{equation}
and
\begin{equation}\label{1003}
a_i\geq c_0>0\;\;\text{on}\;\Gamma\times[0,T],
\end{equation}
for $i,j=1,2$, Let $m$ and $s$ be integers such that $m\geq s$. $(f_1,f_2)$
is assumed to satisfy the following condition:
\begin{equation}\label{1004}
\langle z_1\rangle^l\pa^k_t\pa^{\alpha_1}_{z_1}\pa^{\alpha_2}_{z_2}f_i\in C^0([0,T];L^2(\Omega))
\;\;\text{for}k+|\alpha|\leq m, \;l\in\mathbb{N}.
\end{equation}
and the compatibility condition of order $s+1$:
\begin{equation}\label{1004}
\pa^k_tf_i(z,0)=0,\;\;0\leq k\leq s.
\end{equation}

Thanks to that the coefficient of $\pa_{z_2}u$ is vanished, we could obtain that
\begin{proposition}\label{Prandtl Existence}
Assume that the conditions \eqref{1002}-\eqref{1004} hold. Then there exists a unique solution
to \eqref{1000}-\eqref{1001} satisfying
\begin{equation}\label{1005}
\langle z_1\rangle^l\pa^k_t\pa^{\alpha_1}_{z_1}\pa^{\alpha_2}_{z_2}u_i\in C^0([0,T];L^2(\Omega))
\end{equation}
for $k+|\alpha|\leq m-1,\;k+[\frac{\alpha_1+1}{2}]\leq s,$ and
$$\pa^k_t u_i(z,0)=0,\;\;k=0,1,...,\min(s,m-1),\,\,\,\;\;\;\;i=1,2$$
\end{proposition}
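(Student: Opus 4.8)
The plan is to exploit the structural feature emphasized in the statement: since the coefficients $a_i,b_{ij},c_{ij}$ depend only on $(0,z_2,t)$ and no $\partial_{z_2}$ derivative appears in the principal part, the tangential variable $z_2$ enters the system \eqref{1000} purely as a parameter. Thus for each fixed $z_2$ the problem is a linear parabolic system in the single spatial variable $z_1$ on the half-line, coupled only through the zeroth-order matrix $(c_{ij})$ and the degenerate transport matrix $(b_{ij})z_1\partial_{z_1}$. First I would construct solutions of this one-dimensional (in space) evolution by a standard approximation scheme --- Galerkin in $z_1$ together with a truncation of the domain to $z_1\in[0,R]$, or equivalently a parabolic regularization --- for which existence is classical, and then pass to the limit using the a priori estimates below.

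The heart of the matter is the family of weighted energy estimates. Multiplying the $i$-th equation by $\langle z_1\rangle^{2l}u_i$, summing in $i$ and integrating over $z_1>0$, the boundary terms vanish by \eqref{1001} and the decay at infinity, the diffusion produces the coercive dissipation $\sum_i\int\langle z_1\rangle^{2l}|\partial_{z_1}u_i|^2$, and the hypothesis $a_i\ge c_0$ controls $\tfrac{\mathrm{d}}{\mathrm{d}t}\sum_i\int\langle z_1\rangle^{2l}a_i u_i^2$. The only delicate contribution is the degenerate transport term with its unbounded coefficient $z_1$; here the key computation is the integration by parts
\begin{equation}\nonumber
\int_0^\infty \langle z_1\rangle^{2l}z_1\,u_i\,\partial_{z_1}u_j\,\mathrm{d}z_1 = -\tfrac12\int_0^\infty \partial_{z_1}\!\big(\langle z_1\rangle^{2l}z_1\big)\,u_i u_j\,\mathrm{d}z_1 \quad (i=j),
\end{equation}
together with the analogous identity obtained from $\partial_{z_1}(u_iu_j)$ for the symmetric off-diagonal part. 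Since $\partial_{z_1}(\langle z_1\rangle^{2l}z_1)=O(\langle z_1\rangle^{2l})$, every such term is dominated by the weighted $L^2$ norm $\sum_i\int\langle z_1\rangle^{2l}u_i^2$ without generating a higher weight, so the estimate closes at each fixed $l$. A Gronwall argument then yields, for every $l$, a bound on $\langle z_1\rangle^l u_i$ in $C^0([0,T];L^2)$ together with $\langle z_1\rangle^l\partial_{z_1}u_i$ in $L^2$.

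Then regularity would be obtained in two separate directions. For the tangential derivatives $\partial_t^k\partial_{z_2}^{\alpha_2}$ I would differentiate \eqref{1000}: because the coefficients are smooth in $(z_2,t)$ and do not multiply any $z_2$-derivative of $u$, the differentiated unknowns satisfy a system of exactly the same form, with a forcing built from $f_i$ and from lower-order tangential derivatives already controlled; running the weighted energy estimate and inducting on $k+\alpha_2$ propagates all the bounds, while the compatibility condition \eqref{1004} (which forces $\partial_t^k f_i(z,0)=0$ for $k\le s$) makes the initial data of each differentiated problem vanish and hence yields $\partial_t^k u_i(z,0)=0$ for $k\le\min(s,m-1)$. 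For the normal derivatives I would use the equation algebraically: $\partial_{z_1}^2 u_i$ equals $a_i\partial_t u_i + \sum_j b_{ij}z_1\partial_{z_1}u_j + \sum_j c_{ij}u_j - f_i$, so each pair of $\partial_{z_1}$-derivatives is exchanged for one $\partial_t$-derivative plus lower-order terms; iterating this recovers $\langle z_1\rangle^l\partial_{z_1}^{\alpha_1}\partial_t^k\partial_{z_2}^{\alpha_2}u_i$ in the stated range and accounts precisely for the parabolic bookkeeping $k+[\tfrac{\alpha_1+1}{2}]\le s$. Uniqueness is immediate: a difference of two solutions solves the homogeneous problem with zero data, and the basic $l=0$ energy estimate combined with Gronwall forces it to vanish.

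The step I expect to be the main obstacle is ensuring that the weighted estimates genuinely close in spite of the linearly growing transport coefficient $z_1$; the cancellation displayed above is what saves the diagonal and symmetric parts, and handling any remaining off-diagonal transport so that no weight $\langle z_1\rangle^{2l+2}$ is produced on the right-hand side --- using the divergence structure inherited from the symmetry of the original hyperbolic system, or treating the off-diagonal couplings perturbatively via the scalar Prandtl estimate of \cite{xy} --- is the delicate point on which the whole scheme rests.
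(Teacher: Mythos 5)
Your proposal follows essentially the same route as the paper: a parabolic regularization (the paper adds $-\delta\partial_{z_2}^2\mathbf{u}$, proves estimates uniform in $\delta$, and passes to the limit), weighted energy estimates in which the degenerate transport term $b_{ij}z_1\partial_{z_1}$ is tamed by the integration by parts $\int\langle z_1\rangle^{2l}z_1 b_{ij}u_j\partial_{z_1}u_i\,\mathrm{d}z=-\tfrac12\int b_{ij}\partial_{z_1}(\langle z_1\rangle^{2l}z_1)u_iu_j\,\mathrm{d}z$, tangential ($t$ and $z_2$) derivatives propagated by differentiating the system and controlling commutators via the equation, and normal derivatives recovered algebraically by solving for $\partial_{z_1}^2u_i$, which is exactly what produces the count $k+[\tfrac{\alpha_1+1}{2}]\le s$. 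The one point you rightly single out as delicate --- the antisymmetric part of $(b_{ij})$, for which the Wronskian-type combination $u_2\partial_{z_1}u_1-u_1\partial_{z_1}u_2$ is not a total derivative and would cost an extra weight --- is passed over silently in the paper's computation, but it is harmless in the intended application because there $b$ arises from $z_1\partial_{x_1}\mathcal{A}_{1r}$ with $\mathcal{A}_{1r}=QA_{1r}Q^{-1}$ symmetric, so $b_{12}=b_{21}$ and only the symmetric integration by parts is ever needed.
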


In order to prove Proposition \ref{Prandtl Existence}, we consider the following initial
boundary value problem for a small parameter $\delta>0$
\begin{equation}\label{p1}
\begin{split}
a(z_2,t)\pa_t \textbf{u}+b(z_2,t)z_1\pa_{z_1}\mathbf{u}+c(z_2,t)\mathbf{u}-\pa^2_{z_1}\mathbf{u}-\delta\pa^2_{z_2}\mathbf{u}
=\textbf{f}^\delta (z,t),
\end{split}
\end{equation}
$(z,t)\in \Omega\times[0,T],$ with the boundary condition
\begin{equation}\label{p2}
\begin{split}
\mathbf{u}=0\,,\quad\text{on}\quad\!\Gamma\times [0,T],\\
\mathbf{u}(z,0)=0\,,\quad\text{for}\quad\! z\in \Omega.
\end{split}
\end{equation}
where we denote
$$\mathbf{u}=(u_1,u_2)^{\top},$$
$$a(z_2,t)=\begin{pmatrix}
 a_1(z_2,t)&0\\
 0&a_2(z_2,t)
 \end{pmatrix},$$

$$b(z_2,t)=\begin{pmatrix}
 b_{11}(z_2,t)&b_{12}(z_2,t)\\
 b_{21}(z_2,t)&b_{22}(z_2,t)
 \end{pmatrix}
$$

$$c(z_2,t)=\begin{pmatrix}
 c_{11}(z_2,t)&c_{12}(z_2,t)\\
 c_{21}(z_2,t)&c_{22}(z_2,t)
 \end{pmatrix}
$$
and $\textbf{f}^\delta=(f^\delta_1,f^\delta_2)^{\top}$ which is smooth and satisfies
\begin{equation}\label{p3}
\pa_t^k \textbf{f}^\delta(z,0)=0,\;\;\text{for}\;k=0,1,...s.
\end{equation}

We could have the following uniform energy estimates, Proposition \eqref{Prandtl Existence}
is its corollary with the standard theory for linear parabolic equations and
approximate process($\delta\rightarrow 0$). In the rest part of this section, the generic constant
$C$ depend only on $a,b,c$ and are independent of $\delta$.

\begin{lemma}\label{plemma1}
The solution $\textbf{u}^\delta$ of \eqref{p1}-\eqref{p2} satisfies  that for any $ t\in[0,T],$
\begin{equation*}
\begin{split}
&\sum_{k+|\alpha|\leq m,k+[\frac{\alpha_1+1}{2}]\leq s}\|\langle z_1\rangle^l\pa^k_t\pa^{\alpha_1}_{z_1}\pa^{\alpha_2}_{z_2}\textbf{u}^\delta\|^2\\
\leq& C\sum_{k+\|\alpha|\leq m,k+[\frac{\alpha_1+1}{2}]\leq s}\int^t_0\|\langle z_1\rangle^{l+m}\pa^k_t\pa^{\alpha_1}_{z_1}\pa^{\alpha_2}_{z_2}\textbf{f}^\delta(z,s)\|^2\,\mathrm{d}s.
\end{split}
\end{equation*}
\end{lemma}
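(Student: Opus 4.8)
The plan is to establish the weighted estimate by an energy method, first for the zeroth-order weighted norm and then by induction on the order of derivatives, exploiting two structural features of \eqref{p1}: the coefficients $a,b,c$ depend only on $(z_2,t)$ and not on $z_1$, and $a$ is diagonal with $a_i\geq c_0>0$.

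\textbf{Basic estimate.} First I would multiply \eqref{p1} by $\langle z_1\rangle^{2l}\mathbf{u}$ and integrate over $\Omega$. The time term yields $\tfrac{1}{2}\tfrac{d}{dt}\langle a\langle z_1\rangle^{2l}\mathbf{u},\mathbf{u}\rangle$ up to a harmless lower-order contribution from $\partial_t a$, and since $a\geq c_0$ this is equivalent to the derivative of $\|\langle z_1\rangle^l\mathbf{u}\|^2$. The viscous term $-\partial_{z_1}^2\mathbf{u}$, integrated by parts with the boundary term at $z_1=0$ vanishing because $\mathbf{u}=0$ on $\Gamma$, produces the dissipation $\|\langle z_1\rangle^l\partial_{z_1}\mathbf{u}\|^2$ plus a weight commutator controlled by $\|\langle z_1\rangle^l\mathbf{u}\|\,\|\langle z_1\rangle^l\partial_{z_1}\mathbf{u}\|$. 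The term $-\delta\partial_{z_2}^2\mathbf{u}$ contributes the nonnegative quantity $\delta\|\langle z_1\rangle^l\partial_{z_2}\mathbf{u}\|^2$ — the weight is independent of $z_2$, so no cross terms arise — which may simply be discarded, making the estimate uniform in $\delta$. The only delicate term is the drift $b z_1\partial_{z_1}\mathbf{u}$: integrating by parts in $z_1$ moves the derivative onto $b z_1\langle z_1\rangle^{2l}|\mathbf{u}|^2$, and since $\partial_{z_1}(z_1\langle z_1\rangle^{2l})\leq(1+2l)\langle z_1\rangle^{2l}$, the result is bounded by $C\|\langle z_1\rangle^l\mathbf{u}\|^2$. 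The off-diagonal entries $b_{12},b_{21},c_{12},c_{21}$ are absorbed the same way by Cauchy--Schwarz, so the linear coupling stays lower order. Gronwall's inequality together with the zero initial data then gives the case $k=|\alpha|=0$.

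\textbf{Tangential and normal derivatives.} For derivatives $\partial_t^k\partial_{z_2}^{\alpha_2}$ I would differentiate \eqref{p1} directly. Because the coefficients depend only on $(z_2,t)$, the differentiated quantity solves an equation of the same type, with inhomogeneity $\partial_t^k\partial_{z_2}^{\alpha_2}\mathbf{f}^\delta$ plus commutators in which derivatives of $a,b,c$ — bounded by \eqref{1002} — multiply strictly lower-order derivatives of $\mathbf{u}^\delta$; these do not disturb the $z_1$-structure, so the same weighted computation applies and the lower-order terms are absorbed by the inductive hypothesis. The normal derivatives are handled through the equation: solving \eqref{p1} for $\partial_{z_1}^2\mathbf{u}$ expresses every pair of $z_1$-derivatives in terms of one time derivative plus tangential and lower-order terms. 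This is exactly the trade-off encoded in the constraint $k+[\tfrac{\alpha_1+1}{2}]\leq s$, and the reason the compatibility condition \eqref{p3} up to order $s$ is required, since it guarantees that the relevant time traces at $t=0$ vanish and the integration from zero initial data leaves no boundary-in-time contribution. When commuting $\partial_{z_1}^{\alpha_1}$ through the drift $b z_1\partial_{z_1}$, each differentiation of the factor $z_1$ leaves a lower-order term, but controlling the surviving $z_1$-weighted pieces costs one extra power of $\langle z_1\rangle$ per derivative; over derivatives of total order up to $m$ this accumulates to the weight $\langle z_1\rangle^{l+m}$ appearing on the right-hand side.

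\textbf{Main obstacle.} The heart of the argument is the simultaneous control of the degenerate drift $b z_1\partial_{z_1}$ and the polynomial weights in the higher-order estimates. One must verify that the weight growth produced both by the integration by parts in the drift term and by the commutators in the inductive step never exceeds the advertised order $\langle z_1\rangle^{l+m}$, and that the off-diagonal coupling entries of $b$ and $c$ remain genuinely lower order, so that the two equations of the system decouple at top order and the coupled energy closes. Once this bookkeeping is organized by induction on $k+[\tfrac{\alpha_1+1}{2}]$ and on total order, a final application of Gronwall's inequality completes each step and yields the stated estimate uniformly in $\delta$, from which Proposition \ref{Prandtl Existence} follows by the standard parabolic theory and the limit $\delta\to 0$.
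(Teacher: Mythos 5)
Your overall route --- a weighted $L^2$ energy estimate in which the drift term is tamed by the observation that $\partial_{z_1}\bigl(z_1\langle z_1\rangle^{2l}\bigr)\lesssim\langle z_1\rangle^{2l}$, tangential derivatives obtained by differentiating the equation (possible because $a,b,c$ are independent of $z_1$), normal derivatives recovered from the equation at the cost of time derivatives and powers of the weight, and uniformity in $\delta$ from discarding the nonnegative $\delta$-dissipation --- is the same as the paper's. However, there is a genuine gap in your tangential step. When you apply $\partial_{z_2}^{\alpha_2}$ to \eqref{p1}, the commutator with the coefficient of the time derivative produces terms $(\partial_{z_2}^{j}a_i)\,\partial_t\partial_{z_2}^{\alpha_2-j}u_i$ with $j\geq 1$. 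These are \emph{not} strictly lower order: they carry the same total number of derivatives as $\partial_{z_2}^{\alpha_2}u_i$, but one of them is a time derivative that is not controlled at that stage of the induction. Nor can you simply enlarge the family being estimated simultaneously and close by Gronwall, because iterating the commutator cascades into $\partial_t^{k}u$ with $k>s$, for which the compatibility condition \eqref{p3} provides no vanishing initial trace and the Gronwall argument from zero data breaks down. So the assertion that the commutators are ``absorbed by the inductive hypothesis'' fails precisely here; the paper singles this out as the main difficult term.

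The missing idea, supplied in \eqref{p7}--\eqref{p8} of the paper, is to substitute the equation back into the commutator, writing $\partial_t u_i=a_i^{-1}\bigl(-\sum_{j}(b_{ij}z_1\partial_{z_1}u_j+c_{ij}u_j)+\partial_{z_1}^2u_i+\delta\partial_{z_2}^2u_i+f_i^\delta\bigr)$, so that the offending term becomes purely spatial. The resulting $\partial_{z_1}^2u_i$ contribution is then integrated by parts once more in $z_1$ and absorbed, via Cauchy--Schwarz, into the dissipation $\|\langle z_1\rangle^{l}\partial_{z_1}\partial_{z_2}^{\alpha_2}u_i\|^2$ together with lower-order $z_2$-derivatives, while the drift, $c$, $\delta$, and $f^\delta$ pieces are handled as in your basic estimate. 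With this repair the remainder of your plan --- time derivatives, mixed derivatives, and the iterative recovery of two normal derivatives from the equation, which is indeed the source of both the constraint $k+[\tfrac{\alpha_1+1}{2}]\leq s$ and the weight loss $\langle z_1\rangle^{l+m}$ caused by the unbounded coefficient $z_1$ in the drift --- goes through as in the paper.
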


\begin{proof}

 For simplify, we shall omit the parameter $\delta$.  First, we get the $L^2$ estimate of $\textbf{u}$. Multiplying \eqref{p1} by $\langle z_1\rangle^{2l}\textbf{u}$, and integrating by part, so we obtain that
\begin{equation}\label{p4}
\begin{split}
&\frac{1}{2}\frac{\mathrm{d}}{\mathrm{d}t}\int_{\Omega}\sum_{i=1,2}\langle z_1\rangle^{2l}a_i|u_i|^2\,\mathrm{d}z-
\frac{1}{2}\int_{\Omega}\sum_{i=1,2}\langle z_1\rangle^{2l}\pa_t(a_i)|u_i|^2\,\mathrm{d}z\\&+\int_{\Omega}\sum_{i,j=1,2}\langle z_1\rangle^{2l}
b_{ij}z_1u_j\pa_{z_1}u_i\,\mathrm{d}z+\int_{\Omega}\sum_{i,j=1,2}\langle z_1\rangle^{2l}
c_{ij}u_j u_i\,\mathrm{d}z\\&+\int_{\Omega}\sum_{i=1,2}\pa_{z_1}(\langle z_1\rangle^{2l}u_i)\pa_{z_1}u_i\,\mathrm{d}z
+\int_{\Omega}\sum_{i=1,2}\delta\langle z_1\rangle^{2l}|\pa_{z_2}u_i|^2\,\mathrm{d}z\\=&\int_{\Omega}\sum_{i=1,2}\langle z_1\rangle^{2l}f_iu_i\,\mathrm{d}z,
\end{split}
\end{equation}
Note that
\begin{equation}\label{p5}
\begin{split}
&\int_{\Omega}\sum_{i,j=1,2}\langle z_1\rangle^{2l}
b_{ij}z_1u_j\pa_{z_1}u_i\,\mathrm{d}z\\=&\frac{1}{2}\int_{\Omega}\sum_{i,j=1,2}b_{ij}\pa_{z_1}(\langle z_1\rangle^{2l}z_1u_iu_j)dz-\frac{1}{2}\int_{\Omega}\sum_{i,j=1,2}b_{ij}\pa_{z_1}(\langle z_1\rangle^{2l}z_1)u_iu_j\,\mathrm{d}z\\
=&-\frac{1}{2}\int_{\Omega}\sum_{i,j=1,2}b_{ij}\pa_{z_1}(\langle z_1\rangle^{2l}z_1)u_iu_j\,\mathrm{d}z\,.
\end{split}
\end{equation}
Then we could have that for any $ t\in[0,T],$
\begin{equation}\label{p5}
\begin{split}
\sum_{i=1,2}\|\langle z_1\rangle^{l}u_i(t)\|^2+\int_0^t\sum_{i=1,2}\|\langle z_1\rangle^{l}\pa_{z_1}u_i(s)\|^2\,\mathrm{d}s\\+
\delta\int_0^t\sum_{i=1,2}\|\langle z_1\rangle^{l}\pa_{z_2}u_i(s)\|^2\,\mathrm{d}s
\leq C \int_0^t\|\langle z_1\rangle^{l}\textbf{f}^\delta(s)\|^2\,\mathrm{d}s.
\end{split}
\end{equation}

 Next, we estimate the derivative with respect to $z_2$. Applying $\pa_{z_2}^{\alpha_2}$ to (\ref{p1}),
  multiplying these equations by $\langle z_1\rangle^{2l}\pa_{z_2}^{\alpha_2}\textbf{u}$, we have that
 \begin{equation}\label{p6}
\begin{split}
&\frac{1}{2}\frac{\mathrm{d}}{\mathrm{d}t}\int_{\Omega}\sum_{i=1,2}\langle z_1\rangle^{2l}a_i|\pa_{z_2}^{\alpha_2}u_i|^2\,\mathrm{d}z-
\frac{1}{2}\int_{\Omega}\sum_{i=1,2}\langle z_1\rangle^{2l}\pa_t(a_i)|\pa_{z_2}^{\alpha_2}u_i|^2\,\mathrm{d}z\\
&-\int_{\Omega}\sum_{i,j=1,2}\pa_{z_1}(\langle z_1\rangle^{2l}z_1)
b_{ij}\pa_{z_2}^{\alpha_2}u_j\pa_{z_2}^{\alpha_2}u_i\,\mathrm{d}z+\int_{\Omega}\sum_{i,j=1,2}\langle z_1\rangle^{2l}
c_{ij}\pa_{z_2}^{\alpha_2}u_j \pa_{z_2}^{\alpha_2}u_i\,\mathrm{d}z\\&+\int_{\Omega}\sum_{i=1,2}\pa_{z_1}(\langle z_1\rangle^{2l}\pa_{z_2}^{\alpha_2}u_i)\pa_{z_1}\pa_{z_2}^{\alpha_2}u_i\,\mathrm{d}z
+\int_{\Omega}\sum_{i=1,2}\delta\langle z_1\rangle^{2l}|\pa_{z_2}\pa_{z_2}^{\alpha_2}u_i|^2\,\mathrm{d}z\\=&\int_{\Omega}\sum_{i=1,2}\langle z_1\rangle^{2l}\pa_{z_2}^{\alpha_2}f^\delta_i\pa_{z_2}^{\alpha_2}u_i\,\mathrm{d}z-\int_{\Omega}\sum_{i=1,2}\langle z_1\rangle^{2l}[\pa_{z_2}^{\alpha_2},a_i]
\pa_tu_i\pa_{z_2}^{\alpha_2}u_i\,\mathrm{d}z\\&-\int_{\Omega}\sum_{i,j=1,2}\langle z_1\rangle^{2l}[\pa_{z_2}^{\alpha_2},b_{ij}]z_1
\pa_{z_1}u_i\pa_{z_2}^{\alpha_2}u_j\,\mathrm{d}z\\&-\int_{\Omega}\sum_{i,j=1,2}\langle z_1\rangle^{2l}[\pa_{z_2}^{\alpha_2},c_{ij}]
u_i\pa_{z_2}^{\alpha_2}u_j\,\mathrm{d}z,
\end{split}
\end{equation}
For the main difficult term $\int_{\Omega}\sum_{i=1,2}\langle z_1\rangle^{2l}[\pa_{z_2}^{\alpha_2},a_i]
\pa_tu_i\pa_{z_2}^{\alpha_2}u_i$, we have that
\begin{equation}\label{p7}
\begin{split}
&\int_{\Omega}\sum_{i=1,2}\langle z_1\rangle^{2l}[\pa_{z_2}^{\alpha_2},a_i]
\pa_tu_i\pa_{z_2}^{\alpha_2}u_i\\
=&-\int_{\Omega}\sum_{i=1,2}\langle z_1\rangle^{2l}[\pa_{z_2}^{\alpha_2},a_i]
\frac{1}{a_i}\{\sum_{j=1,2}(b_{ij}z_1\pa_{z_1}u_j+c_{ij}u_j)\\&-\pa_{z_1}^2u_i-\delta\pa_{z_2}^2u_i-f_i^\delta\}\pa_{z_2}^{\alpha_2}u_i,
\end{split}
\end{equation}
and
\begin{equation}\label{p8}
\begin{split}
&|\int_{\Omega}\sum_{i=1,2}\langle z_1\rangle^{2l}[\pa_{z_2}^{\alpha_2},a_i]
\frac{1}{a_i}\pa_{z_1}^2u_i\pa_{z_2}^{\alpha_2}u_i\,\mathrm{d}z|\\
=&|-\int_{\Omega}\sum_{i=1,2}\pa_{z_1}([\pa_{z_2}^{\alpha_2},a_i])
\frac{1}{a_i}\pa_{z_1}u_i\langle z_1\rangle^{2l}\pa_{z_2}^{\alpha_2}u_i\,\mathrm{d}z-\\&\int_{\Omega}\sum_{i=1,2}[\pa_{z_2}^{\alpha_2},a_i]
\frac{1}{a_i}\pa_{z_1}u_i\pa_{z_1}(\langle z_1\rangle^{2l}\pa_{z_2}^{\alpha_2}u_i)\,\mathrm{d}z|\\
\leq &C \sum_{j\leq\alpha_2-1}\|\langle z_1\rangle^{l}\pa_{z_1}\pa_{z_2}^ju_i\|^2+\tfrac{1}{4}
\|\langle z_1\rangle^{l}\pa_{z_1}\pa_{z_2}^{\alpha_2}u_i\|^2+\|\langle z_1\rangle^{l}\pa_{z_2}^{\alpha_2}u_i\|^2,
\end{split}
\end{equation}
Then from (\ref{p6}), we could get that
\begin{equation}\label{p9}
\begin{split}
&\sum_{\alpha_2=1}^m\{\|\langle z_1\rangle^{l}\pa_{z_2}^{\alpha_2}\textbf{u}(t)\|^2+\int_0^t\|\langle z_1\rangle^{l}\pa_{z_1}\pa_{z_2}^{\alpha_2}\textbf{u}(s)\|^2\,\mathrm{d}s\}\\ \leq &C \sum_{\alpha_2=0}^m\int_0^t\|\langle z_1\rangle^{l}\pa_{z_2}^{\alpha_2}\textbf{f}^\delta(s)\|^2\,\mathrm{d}s.
\end{split}
\end{equation}

Applying $\pa^k_t$ to the equation \eqref{p1}, similarly, one could get that

\begin{equation}\label{p10}
\begin{split}
&\sup_{t\in [0,T]}\|\langle z_1\rangle^{l}\pa_t^k\textbf{u}(t)\|^2+\int_0^t\|\langle z_1\rangle^{l}\pa_{z_1}\pa_t^k\textbf{u}(s)\|^2\,\mathrm{d}s\}\\
\leq &C\{\int_0^{\top}\|\langle z_1\rangle^{l}\pa_t^k\textbf{f}^\delta(s)\|^2\,\mathrm{d}s+\sum_{k'\leq k-1}[T\sup_{t\in [0,T]}(
\|\langle z_1\rangle^{l}\pa_t^{k'}\textbf{u}(t)\|^2\\
&+\|\langle z_1\rangle^{l}\pa_{z_2}\pa_t^{k'}\textbf{u}(t)\|^2)
+\int_0^{\top}\|\langle z_1\rangle^{l}\pa_{z_1}\pa_t^{k'}\textbf{u}(s)\|^2\,\mathrm{d}s]\}\,.
\end{split}
\end{equation}
Applying $\pa^{\alpha_2}_{z_2}\pa^k_t$ to the equation \eqref{p1}, and using \eqref{p10}, similar as the estimate
\eqref{p9}, we have that
\begin{equation}\label{p11}
\begin{split}
&\sum_{k+\alpha_2\leq m,0\leq k\leq s}\|\langle z_1\rangle^l\pa^k_t\pa^{\alpha_2}_{z_2}\textbf{u}^\delta\|^2+
\sum_{k+\alpha_2\leq m,0\leq k\leq s}\int_0^t\|\langle z_1\rangle^l\pa^k_t\pa^{\alpha_2}_{z_2}\textbf{u}(s)\|^2\,\mathrm{d}s\\
\leq& C\sum_{k+\alpha_2\leq m,0\leq k\leq s}\int^t_0\|\langle z_1\rangle^{l+m}
\pa^k_t\pa^{\alpha_2}_{z_2}\textbf{f}^\delta(z,s)\|^2\,\mathrm{d}s.
\end{split}
\end{equation}

Finally, we could estimate the normal derivative. From \eqref{p11}, we have
\begin{equation}\label{p12}
\begin{split}
&\sum_{k+\alpha_2\leq m-1,0\leq k\leq s-1}\|\langle z_1\rangle^l\pa_{z_1}\pa^k_t\pa^{\alpha_2}_{z_2}\textbf{u}(t)\|^2
\\&\leq C \sum_{k+\alpha_2\leq m,0\leq k\leq s}\int^t_0\|\langle z_1\rangle^{l+m}
\pa^k_t\pa^{\alpha_2}_{z_2}\textbf{f}^\delta(z,s)\|^2\,\mathrm{d}s.
\end{split}
\end{equation}
Note that
$\pa^2_{z_1}=a(z_2,t)\pa_t \textbf{u}+b(z_2,t)z_1\pa_{z_1}\mathbf{u}+c(z_2,t)\mathbf{u}--\delta\pa^2_{z_2}\mathbf{u}
-\textbf{f}^\delta (z,t)$, iteratively, we could get that
\begin{equation}
\begin{split}
&\sum_{k+|\alpha|\leq m,k+[\frac{\alpha_1+1}{2}]\leq s}\|\langle z_1\rangle^l\pa^k_t\pa^{\alpha_1}_{z_1}\pa^{\alpha_2}_{z_2}\textbf{u}^\delta\|^2\\
\leq& C \sum_{k+\|\alpha|\leq m,k+[\frac{\alpha_1+1}{2}]\leq s}\int^t_0\|\langle z_1\rangle^{l+m}\pa^k_t\pa^{\alpha_1}_{z_1}\pa^{\alpha_2}_{z_2}\textbf{f}^\delta(z,s)\|^2\,\mathrm{d}s.
\end{split}
\end{equation}
The proof of Lemma \ref{plemma1} is complete.
\end{proof}

\noindent{\bf Acknowledgment:}
Ning Jiang was supported by a grant from the National Natural Science Foundation of China under contract  No. 11171173. Yutao Ding was supported by the postdoctoral funding of Mathematical Science Center of Tsinghua University. Ning Jiang also appreciate Prof. Z.P.Xin for his invitation of the visit to the Institute of Mathematical Sciences of CUHK between Feb-April 2012. During the visit, the conversation and suggestion of Prof. Xin play an important role in this work.

\end{document}